\documentclass{amsart}

\usepackage{amsmath,amssymb,amsthm,mathtools,color,url}
\usepackage[margin=3cm]{geometry}
\usepackage{tikz}

\newtheorem{lemma}{Lemma}[section]
\newtheorem{prop}[lemma]{Proposition}
\newtheorem{cor}[lemma]{Corollary}
\newtheorem{thm}[lemma]{Theorem}
\newtheorem{thm?}[lemma]{Theorem?}

\theoremstyle{definition}
\newtheorem{example}[lemma]{Example}
\theoremstyle{remark}
\newtheorem{remark}[lemma]{Remark}

\newcommand{\F}{\mathbb{F}}

\newcommand{\ra}{\ensuremath{\rightarrow}}

\newcommand{\Z}{\mathbb{Z}}
\newcommand{\N}{\mathbb{N}}
\newcommand{\R}{\mathbb{R}}

\newcommand{\Q}{\mathbb{Q}}

\newcommand{\SL}{\operatorname{SL}}

\newcommand{\disc}{\operatorname{disc}}

\newcommand{\loc}{\operatorname{loc}}
\newcommand{\legen}[2]{\genfrac{(}{)}{}{}{#1}{#2}}
\renewcommand{\P}{\mathbb{P}}

\begin{document}
\title{Densities of integer sets represented by quadratic forms}
\author{Pete L. Clark}
\author{Paul Pollack}

\author{Jeremy Rouse}

\author{Katherine Thompson}

\maketitle

\begin{abstract}
Let $f(t_1,\ldots,t_n)$ be a nondegenerate integral quadratic form.  We analyze the asymptotic behavior of the function $D_f(X)$, the number of integers of absolute value up to $X$ represented by $f$.  When $f$ is isotropic or $n$ is at least $3$, we show that there is a $\delta(f) \in \Q \cap (0,1)$ such that $D_f(X) \sim \delta(f) X$ and call $\delta(f)$ the \textbf{density} of $f$.  We consider the inverse problem of which 
densities arise.  Our 
main technical tool is a Near Hasse Principle: a quadratic form may fail to represent infinitely many integers that it locally 
represents, but this set of exceptions has density $0$ within the set of locally represented integers.
\end{abstract}


\section{Introduction}

\subsection{Terminological preliminaries}
Let $S$ be a subset of $\Z$.  We say that $S$ is \textbf{positive} if it contains no negative integers,
\textbf{negative} if it contains no positive integers, \textbf{definite} if it is either positive or negative and \textbf{indefinite} otherwise.
\\ \\
We say a subset $S \subset \Z$ has \textbf{density} $\delta = \delta(S)$ if
\[ \begin{cases}  \lim_{N \ra \infty} \frac{\# S \cap [1,N]}{N}  = \delta & \text{ if }S \text{ is positive}, \\
\lim_{N \ra \infty} \frac{\# S \cap [-N,-1]}{N} = \delta & \text{ if }S \text{ is negative}, \\
\lim_{N \ra \infty} \frac{\# S \cap [-N,N]}{2N} = \delta & \text{ if }S \text{ is indefinite};
\end{cases} \]
in each case we are requiring the limit to exist.  We say that $S \subset \Z$ has \textbf{upper density} $\overline{\delta}$ (
resp. \textbf{lower density} $\underline{\delta}$) if in the above definition we replace $\lim_{N \ra \infty}$ by $\limsup_{N \ra \infty}$ (resp. by $\liminf_{N \ra \infty}$). 
\\ \\
Let $R$ be a PID of characteristic different from $2$ with fraction field $K$.  We denote by $R^{\bullet}$ the set of nonzero elements of $R$.  For a prime element $p$ of $R$, let
$v_p$ be the corresponding $p$-adic valuation. Let $f \in R[t_1,\ldots,t_n]$ be a nondegenerate quadratic form.  (Henceforth we assume all quadratic forms to be nondegenerate.)  We put
\[ D_f \coloneqq f(R^n) \setminus \{0\}. \]
A quadratic form $f$ is \textbf{primitive} if its coefficients generate the unit ideal of $R$.  The quadratic form $f$ corresponds to a $R$-lattice (necessarily free, since $R$ is a PID) $L$ in a quadratic space $(V,q)$ over $K$ on which $f$ is $R$-valued.
We say that $q$ is \textbf{maximal} if there is no $R$-lattice $M \supsetneq L$ on which $q$ is $R$-valued.
\\ \indent
We say that a quadratic form $f_{/R}$ is \textbf{ADC}
(cf. \cite{ADCI}, \cite{ADCII}) if for all $x \in R$, whenever there is $v \in K^n$ such that $f(v) = x$, there is $w \in R^n$ such that $f(w) = x$.  In other words, $f$ is ADC if and only if every element of $R$ that is $K$-represented by $f$ is moreover $R$-represented by $f$.  In symbols:
\[ D_f = D_{f_{/K}} \cap R. \]
For a prime element $p$ of $R$, let $K_p$ be the completion of $K$ with respect to the $p$-adic valuation $v_p$,
and let $R_p$ be the valuation ring.  We say that $f_{/R}$ is \textbf{locally ADC} if for all prime elements $p$, the form $f_{/R_p}$ is ADC.
\\ \\
A quadratic form $f_{/\Z}$ is \textbf{regular} if for all $m \in \Z$, if $f$ $\R$-represents $m$ and $\Z_p$-represents $m$ for
all primes $p$, then $f$ $\Z$-represents $m$.  A quadratic form  $f_{/\Z}$ is \textbf{almost regular} if the set
\[\{ m \in \Z \mid f \text{ represents } m \text{ over } \R \text{ and over } \Z_p \text{ for all primes $p$ but not over } \Z\}\] is finite.  (In this paper, the term ``almost'' in this paper will always mean ``all but finitely many.'')


\begin{thm}\mbox{ }
\label{UNTHM1}
\begin{itemize}
\item[a)] Let $f_{/R_p}$ be a quadratic form.  If $f$ is maximal, then it is ADC.
\item[b)] Let $f_{/R}$ be a quadratic form.  If $f$ is ADC, then it is locally ADC.
\item[c)] Let $f_{/\Z}$ be a quadratic form.  Then $f$ is ADC if and only if it is locally ADC and regular.
\end{itemize}
\end{thm}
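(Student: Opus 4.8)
The plan is to treat the three parts in increasing order of dependence: a) is purely local, b) is a global-to-local approximation argument, and c) assembles b) with the Hasse--Minkowski theorem, so the real content lives in a) and b) while c) is bookkeeping.

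For a), work over the complete DVR $R_p$ with fraction field $K_p$ and reduce to the classical theory of maximal lattices. Given $a \in R_p^\bullet$ with $f(v) = a$ for some $v \in V = K_p^n$, I want $w \in L$ with $q(w) = a$, and I would split on whether $(V,q)$ is isotropic. If $V$ is anisotropic, then $x \mapsto |q(x)|$ behaves like a norm on $V$ (a consequence of anisotropy over the complete field $K_p$), so $L_0 := \{x \in V : q(x) \in R_p\}$ is an $R_p$-lattice; it is visibly the unique maximal lattice on which $q$ is $R_p$-valued, and visibly $q(L_0) = q(V) \cap R_p$, which is exactly the ADC condition. If $V$ is isotropic, a maximal lattice splits off a hyperbolic plane $R_p e \oplus R_p f$ with $q(xe + yf) = xy$, which already represents every element of $R_p$; since a nondegenerate isotropic space represents all of $K_p$, we get $q(L) = R_p = q(V) \cap R_p$. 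Either way $f$ is ADC. The main obstacle here is the structural input — that a maximal lattice on an isotropic space splits off a hyperbolic plane of the correct scale, and the anisotropic classification — which is where the dyadic prime forces the delicate analysis of O'Meara; I would cite this rather than reprove it.

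For b), I argue the contrapositive. Suppose $f_{/R_p}$ is not ADC, so there is $a \in R_p^\bullet$ that is $K_p$-represented but not $R_p$-represented; fix $v \in K_p^n$ with $f(v) = a$. Each of the conditions ``nonzero and $K_p$-represented'' and ``not $R_p$-represented'' cuts out a $p$-adically open subset of $R_p$: the former because the nonzero value set of a nondegenerate form over $K_p$ is open, and the latter because, by Hensel's lemma applied to $f$, $R_p$-representability at a fixed valuation is detected by a congruence modulo a fixed power of $p$, hence is clopen (this replaces a compactness argument, which is unavailable when the residue field is infinite). Thus the ``bad'' set is open and nonempty. Now approximate: since $K^n$ is dense in $K_p^n$, choose $v' \in K^n$ with $f(v')$ close to $a$; then $b := f(v') \in K$ is $K$-represented and, for $v'$ close enough, is nonzero and still not $R_p$-represented. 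Writing $b = x/y$ in lowest terms, $v_p(b) \ge 0$ forces $p \nmid y$, so $y \in R_p^\times$; replacing $b$ by $b' := y^2 b = xy \in R$ and $v'$ by $yv'$ keeps $b'$ both $K$-represented and, since the bad set is invariant under multiplication by $(R_p^\times)^2$, still not $R_p$-represented, hence not $R$-represented. Then $b' \in D_{f_{/K}} \cap R$ but $b' \notin D_f$, contradicting ADC. The delicate point is controlling the $p$-adic valuation (to stay in the same unit square class) while clearing denominators.

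For c), specialize to $R = \Z$, $K = \Q$. The implication ADC $\Rightarrow$ locally ADC is exactly part b). For ADC $\Rightarrow$ regular: if $m$ is represented over $\R$ and over every $\Z_p$, it is represented over every completion of $\Q$, so by Hasse--Minkowski it is $\Q$-represented, whence ADC gives a $\Z$-representation. Conversely, assume $f$ is locally ADC and regular and let $m \in \Z$ be $\Q$-represented; by Hasse--Minkowski $m$ is represented over $\R$ and over every $\Q_p$, local ADC at each $p$ upgrades the $\Q_p$-representation of $m \in \Z_p$ to a $\Z_p$-representation, and regularity then delivers a $\Z$-representation. Hence $D_f = D_{f_{/\Q}} \cap \Z$, i.e.\ $f$ is ADC. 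The only nontrivial external input is Hasse--Minkowski over $\Q$; everything else is the definitions together with part b).
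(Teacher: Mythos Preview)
Your proposal is correct. The paper itself does not prove this theorem: it simply records that parts a), b), c) are, respectively, the combination of \cite[Thm.~8]{ADCI} with \cite[Thm.~19]{ADCI}, then \cite[Cor.~17]{ADCI}, then \cite[Thm.~25]{ADCI}. Your arguments are the standard ones and match in substance what one finds there (and in O'Meara or Shimura for the structural facts about maximal local lattices underpinning part a)), so you are supplying a self-contained proof where the paper defers to a reference.

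One small remark on execution. In b), the cleanest way to see that ``not $R_p$-represented'' is open in $R_p^\bullet$ is to note that both the represented set and its complement in $R_p^\bullet$ are unions of cosets $a R_p^{\times 2}$, and each such coset is open because Hensel's lemma gives $1 + 4\pi R_p \subset R_p^{\times 2}$ (indeed $1 + \pi R_p \subset R_p^{\times 2}$ in odd residue characteristic). This is what your phrase ``detected by a congruence modulo a fixed power of $p$'' is pointing toward, and stating it this way makes explicit that no local compactness is required; it also makes the later step ``the bad set is invariant under multiplication by $(R_p^\times)^2$'' immediate.
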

\noindent
\begin{proof}
These are either all results from \cite{ADCI} or follow immediately from them.  \\
a) Combine \cite[Thm. 8]{ADCI} and \cite[Thm. 19]{ADCI}. \\
b) This is \cite[Cor. 17]{ADCI}.  \\
c) This is \cite[Thm. 25]{ADCI}.
\end{proof}
\noindent

\subsection{The density of an integral quadratic form}
Let $f \in \Z[t_1,\ldots,t_n]$ be an $n$-ary integral quadratic form.    As a special case of a definition given above, we put
\[ D_f := f(\Z^n) \setminus \{0\}. \]
Moreover, for $X \geq 1$, let
\[ D_f(X) \coloneqq D_f \cap [-X,X]. \]
We call $f$ positive, negative or indefinite according to whether $D_f$ is positive, negative or
indefinite.  If $f$ is negative, then $-f$ is positive and $D_f(X) = -D_{-f}(X)$, so negative forms
do not merit separate consideration.
\\ \\
One of the main problems in the area is, given $f$, to determine $D_f$ explicitly.  In general this is quite difficult.
Actually the situation is worse: it is not completely clear what ``determine $D_f$ explicitly'' means!  The set $D_f$ is recursive: there is an algorithm that, given $f$ and $m \in \Z$, determines whether $f$ represents $m$. 
Presumably we have in mind some \emph{finitistic} description of $D_f$.  This is possible e.g. if $f$ is known to be almost regular, but even so there are $14$ positive ternary forms that are known to be regular conditionally on GRH but not yet unconditionally \cite{LemkeOliver14}.

\subsection{Some motivating examples}

The point of departure of this work is the idea that it ought to be simpler to describe the \emph{size} of
$D_f$ rather than $D_f$ itself: namely in terms of the density
\[ \delta(f) \coloneqq \delta(D_f) \]
and -- when $\delta(f) = 0$  --
the asymptotic behavior of $D_f(X)$.  We consider several motivating examples.

\begin{example}\mbox{ }
\label{ISOEX}
\begin{itemize}
\item[a)] Let $f = t_1 t_2$, an indefinite form.  Then $D_f = \Z \setminus\{0\}$, so $\delta(f) = 1$. 
\item[b)] Let $f = t_1^2 - t_2^2$, an indefinite form.   Then 
\[ D_f = \{ m \in \Z^{\bullet} \mid v_2(m) \neq 1\}, \]
so $\delta(f) = \frac{3}{4}$.
\end{itemize}
\end{example}

\begin{example}[Fermat]
\label{FERMATLANDAUEX}
Let $f = t_1^2+t_2^2$, a positive form.  As Fermat knew, $f$ represents $m \in \Z^+$ if and only if 
$v_p(m)$ is even for all primes $p \equiv 3 \pmod{4}$. 
Thus for all primes $p \equiv 3 \pmod{4}$, $f$ does not represent any integer that is divisible by $p$ but
not by $p^2$.  Since \[\prod_{p \equiv 3 \pmod{4}} \left( 1-\frac{1}{p} + \frac{1}{p^2} \right) = 0,\] it
follows that $\delta(f) = 0$.  
\end{example}

\begin{example}[Gauss-Legendre]
Let $f = t_1^2 + t_2^2 + t_3^2$, a positive form.  Gauss and Legendre showed that $f$ represents $n \in \Z^+$ if and only if $n$ is \emph{not} of the form
$4^a (8b+7)$ for $a,b \in \N$.  It follows that
\[ \delta(f) = 1 - \frac{1+ 1/4 + 1/4^2 + \ldots}{8} = \frac{5}{6}. \]
\end{example}

\begin{example}\mbox{}
\label{LAGRANGEEX} 
\begin{itemize}
\item[a)] Let $f = t_1^2 + t_2^2 + t_3^2 + t_4^2$.  Lagrange showed that $D_f = \Z^+$, so $\delta(f) = 1$.
\item[b)] Let $f = 2023 t_1^2 + 2023 t_2^2 + 2023 t_3^2 + 2023 t_4^2$.  
Then $D_f = 2023 \Z^+$ so $\delta(f) = \frac{1}{2023}$.
\item[c)] For a prime number $p$, let $f_p = t_1^2 + p^2 t_2^2 + \ldots + p^2 t_4^2$.  
Then $f_p$ does not represent any integer with $p$-adic valuation $1$, so
\[ \delta(f_p) \leq 1 - \frac{1}{p} + \frac{1}{p^2} < 1. \]
\end{itemize}
\end{example}

\subsection{Results on density}

We observe a stark difference in behavior between Examples \ref{ISOEX} and \ref{FERMATLANDAUEX}.
This is due to the fact the form $t_1^2 + t_2^2$ is positive whereas the forms $t_1t_2$ and $t_1^2-t_2^2$
are not just indefinite but isotropic.    More generally, let $f_{/\Z} = At_1^2 + Bt_1 t_2 + Ct_2^2$ be a binary quadratic form of Discriminant\footnote{This Discriminant (note the capitalization) is defined only for binary quadratic forms.  On the other hand, for an 
$n$-ary quadratic form $q$ defined over a domain of characteristic different from $2$, we define the discriminant $\operatorname{disc}(f)$ to be the determinant of the Gram matrix of the associated bilinear form.} $\Delta \coloneqq B^2-4AC \neq 0$.  Then: $f$ is isotropic if and only if $\Delta = b^2$ for some $b \in \Z^+$.  (This follows, for instance, from \cite[Thm. I.3.2]{Lam}.)

\begin{thm}
\label{MAIN2THM}
\label{THM1.8}
Let $f_{/\Z}$ be a primitive binary quadratic form of Discriminant $\Delta$. 
\begin{itemize}
\item[a)] {\rm (Landau-Bernays  \cite{Landau08}, \cite{Bernays12})} Suppose $f$ is anisotropic.
There is a constant $\kappa > 0$ {\rm (}depending only on $\Delta${\rm )} such that as $X \ra \infty$ we have
\[ \#D_f(X) \sim \frac{ \kappa X}{\log^{1/2} X}. \] 
\item[b)] If $f$ is isotropic of Discriminant $\Delta = b^2$, then $\delta(f) > 0$.  More precisely, for a prime $p$,
let $a_p = v_p(\Delta)$.  Put
\[ \delta_2(f) \coloneqq \begin{cases} 1 & \text{if $a_2 = 0$},\\
\frac{3}{4} & \text{if $a_2 = 1$}, \\
\frac{2+2^{4-a_2} + 2^{5-a_2} + 2^{2-a_2}}{12} & \text{if $a_2 \geq 2$}, \end{cases} \]
and for $p > 2$, put
\[ \delta_p(f) \coloneqq  \frac{p + p^{1-a_p} + 2 p^{-a_p}}{2p+2}. \]
Then
\begin{equation}
\label{THM1.8EQ}
 \delta(f) = \prod_p \delta_p(f),
\end{equation}
the product extending over all prime numbers -- or equivalently, over all primes dividing $\Delta${\rm :} we have
$\delta_p(f) = 1$ if and only if $a_p = 0$.
\end{itemize}
\end{thm}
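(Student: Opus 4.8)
The plan is to normalize the form, reduce the problem to a purely local computation of $\delta_p(f)$, and then reassemble the global density by means of the Near Hasse Principle. First I would exploit that both $D_f$ and the Discriminant $\Delta$ are invariant under the substitution action of $\GL_2(\Z)$. Since $f$ is isotropic it has a primitive zero $v\in\Z^2$; completing $v$ to a $\Z$-basis brings $f$ into the shape
\[ f = t_2(Bt_1 + Ct_2), \qquad \gcd(B,C)=1, \quad \Delta = B^2, \]
so that $b=|B|$ and $a_p = v_p(\Delta) = 2v_p(B)$ for every $p$. In this model an integer $m$ is $\Z$-represented exactly when there is a divisor $e\mid m$ with $m/e \equiv Ce \pmod{B}$. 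This is a genuinely global (multiplicative) condition, and recognizing that is what will force the appeal to the Near Hasse Principle at the end.

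The heart of the matter is the local computation. I would define $\delta_p(f)$ as the Haar-measure density of the set of $m\in\Z_p$ that are $\Z_p$-represented, and compute it by stratifying. Writing $u=t_2$ and letting $Bt_1$ range over $p^{\beta}\Z_p$ with $\beta=v_p(B)$, the represented elements are precisely those of the form $Cu^2 + p^{\beta}uw$ with $u,w\in\Z_p$. Separating cases by $j=v_p(u)$ shows: an $m$ with $v_p(m)$ odd and $<2\beta$ is never represented; an $m$ with $v_p(m)=2j$ and $0\le j<\beta$ is represented iff $mp^{-2j}/C$ is a square unit modulo $p^{\beta-j}$; and every $m$ with $v_p(m)\ge 2\beta$ is represented (take $j=\beta$, so that $m p^{-2\beta}=u'(Cu'+w)$ sweeps out all of $\Z_p$). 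In particular $\delta_p(f)=1$ whenever $\beta=0$, i.e.\ $p\nmid B$, so only the finitely many primes dividing $\Delta$ contribute; this is exactly why the product $\prod_p\delta_p(f)$ is a \emph{finite}, hence convergent and positive, product, in sharp contrast with the anisotropic case of part (a). For odd $p$, Hensel's lemma turns ``square unit modulo $p^{\beta-j}$'' into a single quadratic-residue condition modulo $p$, each stratum contributing a factor $\tfrac12$; summing the resulting geometric series in $j$ yields the stated closed form $\frac{p+p^{1-a_p}+2p^{-a_p}}{2p+2}$. For $p=2$ the same bookkeeping applies, except that the proportion of units that are squares modulo $2^{\beta-j}$ equals $1,\tfrac12,\tfrac14$ according as $\beta-j$ is $1,2,\ge 3$; this is the origin of the separate small-valuation cases at $p=2$, and carrying out the finite sum produces $\delta_2(f)$.

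With the local densities in hand I would assemble the global statement in two steps. Let $L$ denote the set of integers locally represented at every place; the real place imposes nothing, since an isotropic binary form is indefinite and represents all of $\R$. Each $p$-adic condition defining $L$ is a union of residue classes modulo a fixed power $p^{2\beta}$ and so constrains $m$ modulo pairwise coprime moduli; by the Chinese Remainder Theorem these conditions are independent, and, there being finitely many of them, $L$ has natural density $\prod_p\delta_p(f)$. The Near Hasse Principle then guarantees that the integers which are locally but not globally represented form a set of density $0$ inside $L$, whence $\delta(f)=\delta(D_f)=\delta(L)=\prod_p\delta_p(f)>0$.

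I expect the real obstacle to be precisely this last passage from local to global. The divisor–congruence characterization makes it clear that individual locally-represented integers can genuinely fail to be represented — for example a prime $\ell$ that is a quadratic residue but lies in the wrong class modulo $B$ — so the equality $\delta(D_f)=\delta(L)$ cannot be obtained representation-by-representation and must rest on the Near Hasse Principle (equivalently, on the fact that a density-one integer has its divisors equidistributed among the admissible residue classes modulo $B$). The secondary delicate point is the $p=2$ accounting, where the stabilization of the square classes of $(\Z/2^k)^{\times}$ only for $k\ge 3$ is exactly what dictates the exceptional low-valuation behavior of $\delta_2(f)$.
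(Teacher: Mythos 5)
Your proposal is correct and follows essentially the same route as the paper: compute the local densities $\delta_p(f)$, obtain $\delta_{\loc}(f)=\prod_p \delta_p(f)$ by the Chinese Remainder Theorem (the paper's Product Formula), and pass from $\delta_{\loc}(f)$ to $\delta(f)$ via the Near Regularity theorem, which --- as you correctly identify --- is the genuinely global step in the isotropic binary case. The only divergence is in the local computation, and it is cosmetic: the paper evaluates $\delta_p(f)$ by diagonalizing over $\Z_p$ (reducing to $t_1^2-\pi^{2b}t_2^2$ and its representation table) and by the classification of binary forms over $\Z_2$, whereas you stratify the Gauss model $t_2(Bt_1+Ct_2)$ by the valuation of $t_2$; both yield the stated formulas.
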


\begin{thm}
\label{THM1.9}
Let $n \geq 3$, and let $f_{/\Z}$ be an $n$-ary quadratic form.  Then $\delta(f) > 0$.
\end{thm}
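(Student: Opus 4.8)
The plan is to combine the Near Hasse Principle with an analysis of the purely local representation conditions. Write $D_f^{\loc}$ for the set of nonzero integers that $f$ represents over $\R$ and over $\Z_p$ for every prime $p$. The Near Hasse Principle asserts that $D_f^{\loc} \setminus D_f$ has density $0$ inside $D_f^{\loc}$, so that $\delta(f)$ exists and equals $\delta(D_f^{\loc})$ as soon as the latter exists. It therefore suffices to show that $D_f^{\loc}$ has positive lower density; combined with the Euler product computation below this yields both the existence of $\delta(D_f^{\loc})$ and its positivity.

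The key structural input, and the only place the hypothesis $n \geq 3$ is used, is that the local condition at $p$ is vacuous for all but finitely many $p$. Indeed, for $p \nmid 2\disc(f)$ the Gram matrix of $f$ is invertible over $\Z_p$, so $f_{/\Z_p}$ is unimodular of rank $n \geq 3$; a unimodular $\Z_p$-lattice of rank at least $3$ (with $p$ odd) is universal, i.e.\ it represents every element of $\Z_p$. One sees this by reducing modulo $p$ to a nondegenerate form of rank $\geq 3$ over $\F_p$, which is isotropic and represents all of $\F_p$, and then lifting representations by Hensel's lemma. Consequently every integer is represented over $\Z_p$ for all such $p$, and the only genuine constraints come from the finite set $S$ of primes dividing $2\disc(f)$, together with the sign condition at the real place.

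It remains to treat the finitely many primes in $S$ and the real place. For each $p \in S$, split $f_{/\Z_p}$ into Jordan components and let $p^{e}g$ be a component of least valuation, so that $g$ is unimodular of rank $\geq 1$; then $g$ represents some unit $u \in \Z_p^\times$, whence $f$ represents the positive-measure set $p^{e}u\,(\Z_p^\times)^2$. In particular $f$ represents, over $\Z_p$, every integer $m$ with $v_p(m) = e$ whose unit part is suitably congruent to $u$, and such $m$ are cut out by a single residue class modulo a suitable power $p^{N_p}$; hence the local density $\beta_p > 0$. Fixing one admissible class modulo $p^{N_p}$ for each $p \in S$ and combining them by the Chinese Remainder Theorem produces an arithmetic progression of positive density, all of whose members (of the correct sign, when $f$ is definite over $\R$) lie in $D_f^{\loc}$. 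This gives $\underline{\delta}(D_f^{\loc}) > 0$, and the same bookkeeping evaluates $\delta(f)$ as a convergent product $\prod_p \beta_p$ of local densities $\beta_p \in (0,1]$ with $\beta_p = 1$ for $p \notin S$, so $\delta(f) = \delta(D_f^{\loc}) > 0$.

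The main obstacle is the good-prime universality statement, since it is responsible both for the finiteness of the set of active local conditions (hence for the convergence and positivity of the product) and for isolating the role of $n \geq 3$: when $n = 2$ a unimodular binary form over $\Z_p$ can be anisotropic and fail to represent a positive proportion of units for a positive-density set of primes $p$, which is exactly the mechanism producing $\delta(f) = 0$ (cf.\ Example \ref{FERMATLANDAUEX}). By contrast, the bad-prime positivity and the assembly of the finitely many independent congruence conditions into a global density are routine, relying only on the Jordan decomposition, Hensel's lemma, and the Chinese Remainder Theorem.
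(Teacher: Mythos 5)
Your proposal is correct and follows essentially the same route as the paper: reduce to the locally represented set via the Near Regularity theorem, observe that $f_{/\Z_p}$ is universal for all odd $p \nmid \disc(f)$ (this is where $n \geq 3$ enters), check that each remaining local density is positive, and conclude from the now-finite product $\prod_p \delta_p(f)$. The paper's proof is shorter only because it quotes the already-established Density Hasse Principle and the local analysis of \S 2, whereas you re-derive the needed local facts (Hensel lifting at good primes, a represented square class at bad primes, CRT assembly) inline.
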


\begin{thm}
\label{THM1.10}
Let $f_{/\Z}$ be an anisotropic ternary quadratic form.  Then $\delta(f) < 1$.
\end{thm}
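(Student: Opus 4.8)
The plan is to locate a single finite prime at which $f$ fails to represent an entire congruence class of positive density, and then to conclude that a positive proportion of integers is omitted from $D_f$. Since $f$ is anisotropic over $\Q$, the Hasse--Minkowski theorem guarantees that $f$ is anisotropic over at least one completion $\Q_v$. The crux is to upgrade this to a \emph{finite} prime: if the only obstruction were at the real place, i.e. if $f$ were merely definite, then (by Theorem \ref{THM1.9} together with the Near Hasse Principle) $f$ could a priori represent almost all positive integers and have density $1$. So I must rule out the possibility that $f$ is isotropic over every $\Q_p$ with $p$ finite.

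To do this I would use a parity argument. Write $d = \disc(f)$ and let $\epsilon_v(f) \in \{\pm 1\}$ denote the Hasse--Witt invariant of $f$ at $v$. For a rank-three form the classification over a local field (see, e.g., \cite{Lam}) says that $f$ is isotropic over $\Q_v$ if and only if $\epsilon_v(f) = (-1,-d)_v$, where $(\cdot,\cdot)_v$ is the Hilbert symbol. Setting $\eta_v \coloneqq \epsilon_v(f)\cdot(-1,-d)_v$, the form is anisotropic over $\Q_v$ exactly when $\eta_v = -1$. Now the product formula for the Hasse invariant, $\prod_v \epsilon_v(f) = 1$, combined with Hilbert reciprocity, $\prod_v (-1,-d)_v = 1$, gives $\prod_v \eta_v = 1$. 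Hence the set of places of anisotropy has even cardinality; being nonempty, it has at least two elements, and in particular at least one of them is a finite prime $p$ (if the real place were the only anisotropic place the count would be the odd number $1$).

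Finally I would extract the density obstruction at $p$. Over $\Q_p$ a nonzero $a$ is represented by $f$ if and only if $f \perp \langle -a\rangle$ is isotropic, i.e. if and only if this quaternary form is \emph{not} the unique anisotropic four-dimensional form over $\Q_p$; matching discriminants (that form has square discriminant) forces $a \in -d\,(\Q_p^\times)^2$, so $f$ omits exactly the single square class $-d\,(\Q_p^\times)^2$. Intersecting this class with $\Z$ yields a set $B$ cut out by a condition on $v_p(m)$ together with finitely many congruences on the prime-to-$p$ part of $m$; such a set has a well-defined positive density $\beta > 0$ with the appropriate normalization (in $\Z$, or, if $f$ is definite, among the positive integers, since the $p$-adic conditions impose no constraint on sign). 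As every $m \in B$ fails to be represented already over $\Q_p$, we have $D_f \cap B = \emptyset$, whence $\delta(f) \le 1 - \beta < 1$. The main obstacle is the middle step: without the parity argument one cannot exclude the scenario of a definite form that is universal over every finite $\Q_p$, and it is precisely Hilbert reciprocity that forbids an anisotropic ternary form from having its sole obstruction at the infinite place.
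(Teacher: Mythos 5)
Your proof is correct and follows essentially the same route as the paper: Hasse--Minkowski together with the even-cardinality parity argument (Hilbert reciprocity plus the product formula for Hasse invariants) to locate a \emph{finite} anisotropic prime $p$, then the observation that $f$ omits the $\Q_p$-square class of $-\disc f$. The only cosmetic difference is that the paper concludes via the Density Hasse Principle $\delta(f)=\prod_p \delta_p(f)$ with $\delta_p(f)<1$, whereas you directly exhibit a positive-density congruence-defined set of integers missed by $f$; both rest on the same local fact.
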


\begin{remark}
It is immediate that for all $a \in \Z \setminus \{0\}$, we have $\delta( ax^2) = 0$.  So it follows from Theorems \ref{THM1.8}, \ref{THM1.9} and \ref{THM1.10} that the density $\delta(f)$ exists for any quadratic form 
$f_{/\Z}$.
\end{remark}

\begin{remark}
Theorem \ref{THM1.10} is essentially classical.  It follows from the Hasse-Minkowski theory that $f$ is anisotropic
at some prime number $p$ and that $f$ fails to $\Q_p$-represent $-\disc f$, and thus $f$ fails to represent all integers in a nonempty union of congruence classes modulo $p^4$ (if $p > 2$ we may replace $p^4$ by $p^2$). An extensive analysis of such local representation issues will be given later on. \\ \indent
 In particular, no positive definite ternary form is universal, a result that goes back at least to 1933 \cite{Albert33}.  See \cite{MO} and \cite{Doyle-Williams17} for further information on the history.  The latter work gives an explicit
congruence class of integers not represented by any given definite ternary $f$ using elementary methods.
\end{remark}

\begin{thm}
\label{THM1.12}
Let $n \geq 3$, and let $f_{/\Z}$ be a quadratic form.  If $n = 3$ we assume that $f$ is isotropic. 
\begin{itemize}
\item[a)] The following are equivalent:
\begin{itemize}
\item[(i)] $f$ is locally universal: $f_{/\Z_p}$ is universal for all primes $p$.
\item[(ii)] $f$ is locally ADC: $f_{/\Z_p}$ is ADC for all primes $p$. 
\item[(iii)] We have $\delta(f) = 1$.
\end{itemize}
\item[b)] If $f$ is maximal, then $\delta(f) = 1$.
\end{itemize}
\end{thm}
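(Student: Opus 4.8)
The plan is to funnel everything through the local equivalence (i) $\Leftrightarrow$ (ii), and then to bridge the local picture to the density (iii) using the Near Hasse Principle in one direction and a compactness argument in the other. The implication (i) $\Rightarrow$ (ii) is immediate: if $f_{/\Z_p}$ represents every element of $\Z_p$, then a fortiori it $\Z_p$-represents every $x \in \Z_p$ that is $\Q_p$-represented, which is exactly the ADC condition. For (ii) $\Rightarrow$ (i) I would first record that, under the standing hypotheses, $f_{/\Q_p}$ is $\Q_p$-universal for every prime $p$. When $n \geq 4$ this is automatic: for $c \in \Q_p^{\bullet}$ the form $f \perp \langle -c\rangle$ has dimension $\geq 5$, hence is isotropic over $\Q_p$, and by the standard representation criterion $f$ represents $c$ (see \cite{Lam}). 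When $n = 3$ the isotropy hypothesis makes $f$ isotropic over $\Q$, hence over every $\Q_p$, and an isotropic nondegenerate form is universal; this is precisely where the hypothesis on ternary forms is used, since an anisotropic $\Q_p$-form in three variables fails to represent one square class. Granting $\Q_p$-universality, any ADC form $f_{/\Z_p}$ must $\Z_p$-represent every $x \in \Z_p$ (each such $x$ being $\Q_p$-represented), i.e.\ is universal, giving (ii) $\Rightarrow$ (i).

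Next I would connect (i) and (iii). For (i) $\Rightarrow$ (iii): local universality at all finite primes means every integer in the relevant range (the positive integers, the negative integers, or all of $\Z^{\bullet}$, according to the sign conventions folded into the definition of density) is represented over $\R$ and over every $\Z_p$; thus the set of locally represented integers is the full range, and the Near Hasse Principle --- which asserts that the locally represented integers missed globally form a density-$0$ set --- yields $\delta(f) = 1$. For the contrapositive (iii) $\Rightarrow$ (i), suppose $f_{/\Z_p}$ is not universal, say $a \in \Z_p$ is unrepresented. Since $\Z_p^n$ is compact and $f$ is continuous, $f(\Z_p^n)$ is closed, so some ball $a + p^k\Z_p$ is disjoint from it; every integer congruent to $a$ modulo $p^k$ is then unrepresented over $\Z_p$, hence over $\Z$. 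This nonempty union of residue classes has density at least $p^{-k}$, so $\delta(f) \leq 1 - p^{-k} < 1$. Together with (i) $\Leftrightarrow$ (ii) this proves part a).

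Part b) then drops out. Maximality of a $\Z$-lattice is a local condition, so $f_{/\Z}$ maximal implies $f_{/\Z_p}$ maximal for every $p$; by Theorem \ref{UNTHM1}(a) each $f_{/\Z_p}$ is then ADC, so $f$ satisfies condition (ii) of part a), whence $\delta(f) = 1$ by the equivalence (ii) $\Leftrightarrow$ (iii).

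The genuinely substantial input is the Near Hasse Principle feeding (i) $\Rightarrow$ (iii); it is the sole global and analytic ingredient and the only nontrivial obstacle, the remaining implications being formal consequences of the $\Q_p$-classification and a compactness argument.
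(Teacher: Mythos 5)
Your proof is correct and follows essentially the same route as the paper: both establish (i) $\Leftrightarrow$ (ii) from the $\Q_p$-universality of $f$ under the stated hypotheses, and both pass between the local conditions and (iii) via the Near Regularity / Density Hasse Principle machinery, with part b) reduced to part a) through Theorem \ref{UNTHM1}a) in the same way. The only cosmetic difference is in (iii) $\Rightarrow$ (i): the paper invokes the Product Formula $\delta(f) = \prod_p \delta_p(f)$ together with the observation that each $\Z_p$-square class has positive measure, whereas you argue directly that non-universality at $p$ excludes a full congruence class (by compactness of $f(\Z_p^n)$), an equally valid and slightly more self-contained way to see that $\delta(f) < 1$.
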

\noindent
These results invite us to consider the  ``inverse problem'' for densities of representation sets: which real numbers
arise as the density $\delta(f)$ of an 
 $n$-ary quadratic form?  Here are some results on this:

\begin{thm}[Rationality of densities]
\label{MAINQTHM}
\label{DENSITYISRATIONALTHM}
\label{THM1.13}
For $f_{/\Z}$ a quadratic form, we have $\delta(f) \in \Q$.
\end{thm}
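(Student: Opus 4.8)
The plan is to reduce the statement to a product of local densities, each of which is visibly rational, and to show that all but finitely many of these local densities equal $1$. First I would dispose of the low-rank cases: if $n = 1$ then $f = a t_1^2$ and $\delta(f) = 0 \in \Q$, while if $n = 2$ the value of $\delta(f)$ is given either by Theorem~\ref{MAIN2THM}(a) in the anisotropic case (so $\delta(f) = 0$) or by the explicit finite product of Theorem~\ref{MAIN2THM}(b) in the isotropic case; in the latter each factor $\delta_p(f)$ is manifestly rational and equals $1$ for $p \nmid \Delta$, so $\delta(f) \in \Q$. Thus I may assume $n \ge 3$.

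For $n \ge 3$ the Near Hasse Principle lets me replace $D_f$ by the set
\[ L_f \coloneqq \{ m \in \Z : f \text{ represents } m \text{ over } \R \text{ and over } \Z_p \text{ for all primes } p \} \]
of locally represented integers: the two sets differ by a set of density $0$, so $\delta(f) = \delta(L_f)$. The condition at the real place merely selects the sign range matching the normalization in the definition of density (for $f$ definite it restricts to one half-line; for $f$ indefinite it is vacuous) and so contributes no factor. At each finite prime $p$ the condition ``$m \in f(\Z_p^n)$'' is an independent local condition, and a local-to-global density computation should give
\[ \delta(L_f) = \prod_p \beta_p, \qquad \beta_p \coloneqq \mu_p\!\left( f(\Z_p^n) \cap \Z_p \right), \]
where $\mu_p$ is the Haar measure on $\Z_p$ normalized by $\mu_p(\Z_p) = 1$. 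I would then observe that this product is finite: for every $p \nmid 2\disc(f)$ the form $f$ is unimodular over $\Z_p$ of rank $\ge 3$, hence $\Z_p$-universal (its reduction mod $p$ is a rank $\ge 3$ form over $\F_p$, which represents every residue, and isotropy lifts this to all of $\Z_p$), so $\beta_p = 1$.

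It remains to see that each of the finitely many remaining factors $\beta_p$ is rational. Here I would use that representation of a nonzero $m \in \Z_p$ by $f$ depends only on $v_p(m)$ and the class of the unit part of $m$ in $\Z_p^{\times}/(\Z_p^{\times})^2$ (because $f(cv) = c^2 f(v)$), and that $f(pv) = p^2 f(v)$ forces the represented set to be closed under multiplication by $p^2$. Consequently, within each of the finitely many unit square classes, the set of represented valuations is eventually a union of full residue classes modulo $2$. Summing the measures $\mu_p\!\left(p^k \varepsilon (\Z_p^{\times})^2\right) = p^{-k}(1 - 1/p)\big/\,|\Z_p^{\times}/(\Z_p^{\times})^2|$ then expresses $\beta_p$ as a finite sum of terms of the form $c\, p^{-k}$ together with finitely many convergent geometric series $\sum_{j \ge 0} c\, p^{-(k_0 + 2j)} = c\, p^{-k_0}/(1 - p^{-2})$; every such quantity lies in $\Q$, whence $\beta_p \in \Q$ and $\delta(f) = \prod_p \beta_p \in \Q$.

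The main obstacle is the justification of the factorization $\delta(L_f) = \prod_p \beta_p$ together with its finiteness: one must verify that $L_f$, cut out by a congruence-type condition at each bad prime and by the $p^2$-periodic valuation condition described above, genuinely has a density equal to the product of the local measures. This rests on the structural description of $p$-adic representation (the eventual periodicity in the valuation) and on the Near Hasse Principle used to pass from $D_f$ to $L_f$. The explicit determination of $\beta_p$ at the finitely many bad primes---most delicately at $p = 2$, where $\Z_2^{\times}/(\Z_2^{\times})^2$ has order $4$---is where the genuine computational work lies, but rationality of each factor is immediate from its geometric-series form.
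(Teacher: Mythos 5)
Your proposal is correct and follows essentially the same route as the paper: reduce to $\delta(f)=\prod_p\delta_p(f)$ via the Near Regularity/Density Hasse Principle and the Product Formula, note that $\delta_p(f)=1$ for all but finitely many $p$ when $n\ge 3$ or $f$ is binary isotropic, observe each local factor is rational because the represented set is a finite union of square classes and of sets closed under multiplication by $p^2$ (the paper packages this as the representation table and equation \eqref{LOCALDENEQ}), and dispose of the anisotropic $n\le 2$ cases by $\delta(f)=0$. The one piece you flag as needing justification, the factorization $\delta_{\loc}(f)=\prod_p\delta_p(f)$, is exactly the paper's Theorem \ref{PRODFORMTHM}, proved by the truncation-plus-CRT argument you gesture at.
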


\begin{thm}[Density of densities]
\label{MAIN3THM2}
\label{DDTHM}
\label{THM1.14}
Let $n \geq 3$, and let $r,s \in \N$ be such that $r+s = n$.  Let $\mathcal{S}_{r,s}$ be the set of $n$-ary
quadratic forms $f_{/\Z}$ with signature $(r,s)$.  Then the set
\[ \mathcal{D}_{r,s} \coloneqq \{\delta(f) \mid f \in \mathcal{S}_{r,s}\} \]
is dense in $[0,1]$.
\end{thm}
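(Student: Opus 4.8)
The plan is to exhibit a dense family of densities inside each class $\mathcal S_{r,s}$ as products of local densities, exploiting the multiplicativity furnished by the Near Hasse Principle. By that principle, for $n\ge 3$ the density of $f$ equals the density of the set of integers represented by $f$ over $\R$ and over every $\Z_p$, and this factors as an Euler product
\[
\delta(f)=\prod_p \lambda_p(f),\qquad \lambda_p(f)\in\Q\cap(0,1],
\]
where $\lambda_p(f)$ is the density of $p$-adic integers $\Z_p$-represented by $f$, depends only on the isometry class of $f_{/\Z_p}$, and equals $1$ for all but finitely many $p$; the product runs over the finite primes, the constraint at the real place being already incorporated into the definite/indefinite normalization of $\delta$. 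So it suffices to build, inside $\mathcal S_{r,s}$, forms whose finitely many nontrivial local densities multiply to a dense set of values.

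First I would record an elementary lemma: if $(\ell_p)_{p\in\mathcal P}$ are positive reals indexed by an infinite set of primes $\mathcal P$ with $\ell_p\to 0$ and $\sum_{p\in\mathcal P}\ell_p=\infty$, then the set of finite subset-sums $\{\sum_{p\in T}\ell_p : T\subseteq\mathcal P\ \text{finite}\}$ is dense in $[0,\infty)$ (greedily add terms from a tail on which $\ell_p<\varepsilon$). Taking $\ell_p=-\log\lambda_p^{\circ}$ for building-block local densities $\lambda_p^{\circ}=1-\Theta(1/p)$ reduces everything to the following: for an infinite set $\mathcal P$ with $\sum_{p\in\mathcal P}1/p=\infty$ and each finite $T\subseteq\mathcal P$, produce a form $F_T\in\mathcal S_{r,s}$ that is locally universal ($\lambda_q=1$) away from $T$ and satisfies $\lambda_p(F_T)=\lambda_p^{\circ}$ for $p\in T$. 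Then $\delta(F_T)=\prod_{p\in T}\lambda_p^{\circ}=\exp\!\big(-\sum_{p\in T}\ell_p\big)$, and the lemma makes $\{\delta(F_T)\}$ dense in $(0,1]$, hence dense in $[0,1]$.

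For the construction take $\mathcal P=\{p\equiv 3\pmod 4\}$, so $\sum 1/p=\infty$ and $\langle 1,1\rangle$ is anisotropic over $\Z_p$, representing every unit but no element of odd valuation. When $n\ge 4$ I would use the explicit diagonal forms
\[
F_T=\langle \epsilon,\epsilon\rangle\ \oplus\ N_T^{2}\,\langle \delta_1,\dots,\delta_{n-2}\rangle,\qquad N_T=\textstyle\prod_{p\in T}p,\ \ \epsilon,\delta_i\in\{\pm1\},
\]
with signs chosen to give signature $(r,s)$ and to make the reduction at $2$ universal (the $\delta_i$ supply a hyperbolic plane or four squares, and $N_T^{2}$ is a $2$-adic unit, so the scale-$0$ part at $2$ has full rank $n\ge 4$). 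Away from $T\cup\{2\}$ the factor $N_T^{2}$ is a unit, so $F_T$ is unimodular of rank $\ge 3$ and universal; at $2$ it is universal by design; and at $p\in T$ one has $F_T\cong\langle\epsilon,\epsilon\rangle\oplus p^{2}\langle\text{units}\rangle$, whose scale-$0$ part $\langle\epsilon,\epsilon\rangle$ is anisotropic, so $F_T$ misses the $v_p=1$ layer (and at most an $O(1/p^{2})$ set beyond it). Thus $\lambda_p(F_T)=1-\Theta(1/p)$, a value independent of $T$ since scaling a Jordan block by the square unit $(N_T/p)^2$ does not change its class. These are exactly the required building blocks.

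The main obstacle is $n=3$, and especially the definite ternary signatures. Now the three variables cannot simultaneously carry a rank-$\ge 3$ unimodular (hence universal) block at $2$ and a rank-$\le 2$ anisotropic block at the primes of $T$, so the diagonal recipe acquires an extra, $T$-independent loss at $2$ that would bound $\delta$ away from $1$. I would instead invoke Hasse--Minkowski to realize a single ternary $\Z$-form with prescribed localizations: universal at $2$ (e.g. $\cong z^{2}+2xy$), the loss structure $\langle\epsilon,\epsilon\rangle\oplus p^{2}\langle\delta\rangle$ at each $p\in T$, unimodular at the remaining odd primes, and signature $(r,s)$ at $\infty$, checking the Hasse invariants satisfy the product formula. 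For definite ternaries this needs real care: by the quaternion parity constraint such a form is anisotropic at an \emph{odd} number of finite primes, so $\delta<1$ always (consistent with Theorem \ref{THM1.10}); one designates one large prime of $T$ to carry this forced anisotropy — contributing loss $\Theta(1/p)\to 0$, whence $\sup\mathcal D_{3,0}=1$ — while the other primes contribute isotropic integral losses, and the resulting parity restriction on admissible $T$ does not affect density since changing $|T|$ by one alters the subset-sum by a single term $\ell_p\to 0$. Verifying the exact order $\lambda_p=1-\Theta(1/p)$ (no accumulation of loss over higher valuation layers) and the Hasse-invariant bookkeeping for every signature are the remaining computations.
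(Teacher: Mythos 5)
Your proposal is correct and follows essentially the same route as the paper: the Euler product supplied by the Density Hasse Principle, a density lemma for partial products $\prod(1-a_i)$ with $a_i\to 0$ and $\sum a_i=\infty$ (your subset-sum lemma is the logarithm of the paper's Lemma \ref{DDLEMMA}), globalization of prescribed local data at finitely many primes, and the identical parity fix for definite ternaries via one additional large prime whose contribution tends to $0$. The differences are only in execution: you realize the $n\ge 4$ forms explicitly as diagonal forms with isotropic local losses $1-\Theta(1/p)$ at primes $p\equiv 3\pmod 4$, whereas the paper invokes its general globalization result (Theorem \ref{GLOBALIZATION}) with anisotropic maximal local building blocks of density $1-\frac{1}{2p+2}$; and your Hasse--Minkowski sketch for $n=3$ is precisely what that theorem formalizes, including the lattice-level step (prescribing $\Z_p$-isometry classes, not merely $\Q_p$-classes, via \cite[Thm.\ 9.4]{Gerstein}) that your outline elides but would need.
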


\begin{thm}
\label{THM1.15}
As $f_{/\Z}$ varies over all locally ADC ternary quadratic forms, the possible values of $\delta(f)$ comprise only $0\%$ of all rational numbers in $[0,1]$, in the sense of height.  Since maximal quadratic forms are locally ADC, the same holds as we vary over all maximal ternary $f_{/\Z}$.
\end{thm}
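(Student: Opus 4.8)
The plan is to express $\delta(f)$ as a product of explicit local densities, classify the factors for locally ADC ternary forms, and then count. First I would invoke the Near Hasse Principle: the integers represented by $f$ locally everywhere but not globally form a set of density $0$ inside the locally represented integers, so $\delta(f)=\prod_p \delta_p(f)$, where $\delta_p(f)$ is the $p$-adic density of the $\Z_p$-represented integers; the archimedean place enters only through the positive/negative/indefinite normalization already built into the definition of $\delta$. If $f$ is isotropic over $\Q_p$ it is $\Q_p$-universal, and being ADC over $\Z_p$ it is then $\Z_p$-universal, so $\delta_p(f)=1$. Hence $\delta_p(f)<1$ only at the finitely many primes where $f$ is anisotropic over $\Q_p$. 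At such a prime the ADC hypothesis forces the $\Z_p$-represented integers to be exactly the $\Q_p$-represented integers lying in $\Z_p$; since an anisotropic ternary form over $\Q_p$ omits exactly one square class, a Haar-measure computation gives, for odd $p$, $\delta_p(f)\in\{\tfrac{p+2}{2(p+1)},\ \tfrac{2p+1}{2(p+1)}\}$ (according as the omitted class is that of a unit or of $p$ times a unit), each already in lowest terms, and for $p=2$ one of finitely many values. These are instances of the local representation analysis employed throughout this paper.

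Consequently every achievable value with $\delta(f)<1$ has the form $\delta(f)=\varepsilon_2\prod_{p\in S}\frac{N_p}{2(p+1)}$, where $S$ is a finite set of odd primes, each $N_p\in\{p+2,2p+1\}$, and $\varepsilon_2$ ranges over a fixed finite set coming from $p=2$. Writing $\delta(f)=a/b$ in lowest terms, the height is $b$ (as $\delta(f)\in[0,1]$), and I must show $\#\{\delta(f):b\le H\}=o(H^2)$, since the rationals in $[0,1]$ of height at most $H$ number $\sim\frac{3}{\pi^2}H^2$. The decisive elementary point is that every $N_p$ is odd while every $2(p+1)$ has $2$-adic valuation at least $2$; the odd numerators therefore cannot cancel any factor of $2$, so $v_2(b)\ge 2|S|-O(1)$, and in particular $|S|=O(\log H)$.

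Next I would split according to whether $\prod_{p\in S}p\le H$ or $\prod_{p\in S}p>H$. The configurations with $\prod_{p\in S}p\le H$ are parametrized by an odd squarefree $n=\prod_{p\in S}p\le H$ together with the $2^{\omega(n)}$ choices of the $N_p$ and the bounded choice of $\varepsilon_2$; their number is therefore $\ll\sum_{n\le H}2^{\omega(n)}\ll H\log H=o(H^2)$, and a fortiori so is the number of densities they produce.

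The \emph{main obstacle} is the complementary range $\prod_{p\in S}p>H$: here the naive denominator $\prod_{p\in S}2(p+1)$ exceeds $H$, so a reduced height $b\le H$ can occur only through heavy cancellation. This forces arithmetic coincidences --- either the $2(p+1)$ are themselves powers of $2$ (as when $p$ is a Mersenne prime), or the odd numerators $N_p$ cancel the odd parts of the $2(q+1)$ for other $q\in S$ --- phenomena ultimately governed by the scarcity of primes $p$ with $p+1$ smooth. I would bound this tail either by a direct estimate on such coincidences, or by showing that the Dirichlet series $\sum_{\delta(f)<1}\operatorname{ht}(\delta(f))^{-s}$ --- which, were the height exactly multiplicative, would factor as $\prod_p(1+O((p+1)^{-s}))$ and converge for $s>1$ --- retains abscissa of convergence strictly below $2$ once the cancellation is accounted for, giving $\#\{b\le H\}\ll H^{2-\eta}$ for some $\eta>0$. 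Either bound is $o(H^2)$, so the achievable densities comprise $0\%$ of the rationals in $[0,1]$ by height. The final assertion about maximal forms is then immediate, since maximal ternary forms are locally ADC by Theorem \ref{UNTHM1}.
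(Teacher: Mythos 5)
Your reduction to the local analysis is sound and matches the paper: $\delta(f)=\prod_p\delta_p(f)$ by the Density Hasse Principle, $\delta_p(f)=1$ at isotropic primes (isotropic $+$ ADC $\Rightarrow$ universal), and at an anisotropic odd prime $\delta_p(f)\in\{\frac{p+2}{2p+2},\frac{2p+1}{2p+2}\}$ with $v_2(\delta_p(f))<0$, which bounds the number of anisotropic primes in terms of the height. But your proof is not complete: you explicitly leave open the range $\prod_{p\in S}p>H$, where the unreduced denominator $\prod_{p\in S}2(p+1)$ exceeds $H$ and a reduced height $\le H$ can only arise from cancellation between the odd numerators $N_p$ and the odd parts of the $2(q+1)$. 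You correctly identify this as the main obstacle, but neither of the two strategies you sketch (a direct estimate on these coincidences via smoothness of $p+1$, or a Dirichlet-series bound with abscissa below $2$) is carried out, and neither is routine: the conditions $q\mid p+2$ or $q\mid 2p+1$ are easy to satisfy individually, and controlling the cumulative cancellation over all admissible sets $S$ is a genuine multiplicative-combinatorics problem that your write-up does not solve. As it stands the argument establishes the claim only for the subfamily with $\prod_{p\in S}p\le H$.

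The paper avoids this counting problem entirely. After using the $2$-adic valuation to reduce to a bounded number $k$ of anisotropic primes (your observation that $v_2(\delta(f))\le -k$ forces the attained values into a set of rationals of relative proportion $O(2^{-k})$), it shows that for fixed $k$ the set $\mathcal{D}_k$ of all products $\prod_p\delta_p$ with at most $k$ factors different from $1$ is a \emph{compact} subset of $[0,1]$: any sequence of admissible local factors has a subsequence that is constant or converges to $1$ or $\frac12$, so $\mathcal{D}_1$ is sequentially compact and $\mathcal{D}_k$ is its $k$-fold multiplicative image. It then proves that \emph{any} compact $K\subset\Q\cap[0,1]$ contains $0\%$ of the rationals by height, via a measure-theoretic argument: if $K$ contained a positive proportion, then thickening the elements of reduced denominator $n$ by intervals of radius $\frac1{2n}$ and applying a Borel--Cantelli-type lemma would produce a positive-measure set of limit points of $K$, which compactness forces into $K$, contradicting $K\subset\Q$. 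This sidesteps all questions of cancellation in the reduced denominators. If you want to complete your approach you either need to supply the missing tail estimate or replace the counting by this compactness argument.
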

\noindent
We do not know whether for every $\delta \in (0,1) \cap \Q$ there is a ternary quadratic form $f_{/\Z}$ 
with $\delta(f) = \delta$.  

\subsection{The Near Regularity Theorem and the Density Hasse Principle}

The key to the proofs of the above results is a Hasse principle for $\delta(f)$. For a quadratic form $f_{/\Z}$, we say that that $m \in \Z$ is \textbf{locally represented} by $f$ if $f$ represents $m$ over $\Z_p$
for all primes $p$ and also over $\R$.  Let
$D_{f,\loc}$ be the set of integers $n$ that are locally represented by $f$.  Thus we have $D_f \subset D_{f,\loc}$, with equality if and only if $f$ is regular, whereas almost regularity
means that $D_f \setminus D_{f,\loc}$ is finite.

\begin{thm}[Tartakowsky-Kloosterman-Ross-Pall \cite{Ross-Pall46}]
\label{ROSSPALLTHM}
Let $f_{/\Z}$ be a positive quadratic form.  If $f_{/\Z_p}$ is isotropic for all primes $p$, then $f$ is almost regular.
\end{thm}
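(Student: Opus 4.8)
The plan is to reduce the statement to the analytic theory of the theta series of $f$, since the isotropy hypothesis is exactly what is needed to control the singular series. First I would dispose of small rank: the theorem is \emph{vacuous} unless $n \geq 4$. A positive definite unary form is anisotropic over every $\Q_p$; a positive definite binary form has $-\disc f < 0$, so it is isotropic over $\Q_p$ only when $-\disc f \in (\Q_p^\times)^2$, which fails for a positive density of primes; and a positive definite ternary form has quaternion (even Clifford) algebra $C_0(f)$ ramified at $\infty$, hence ramified at an \emph{odd}, in particular nonzero, number of finite primes, at each of which $f$ is $\Q_p$-anisotropic. Thus the hypothesis forces $n \geq 4$, and for $n \geq 5$ it is automatic by Hasse--Minkowski, so the genuine content is the quaternary case.

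Next I would form the theta series $\theta_f(z) = \sum_{m \geq 0} r_f(m)\, q^m$, where $r_f(m) = \#\{x \in \Z^n : f(x) = m\}$ and $q = e^{2\pi i z}$, a modular form of weight $n/2$ on a suitable congruence group. By Siegel's theory it decomposes as $\theta_f = E_f + C_f$ with $E_f$ the genus (Eisenstein) part and $C_f$ a cusp form, and the $m$-th coefficient of $E_f$ is Siegel's product of local densities,
\[ a_{E_f}(m) = c_\infty(m) \prod_p \beta_p(m), \qquad c_\infty(m) \asymp m^{n/2 - 1}, \]
where $\beta_p(m)$ is the $p$-adic representation density of $m$ by $f$ and the archimedean factor is of the stated size because $f$ is positive definite and $m > 0$.

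The crux is a uniform lower bound on the singular series over $m \in D_{f,\loc}$. For such $m$ one has $\beta_p(m) > 0$ for every $p$; the good primes $p \nmid 2\,\disc f$ contribute, up to $m$-dependence that is bounded above and below, a convergent Euler product equal to a nonzero $L$-value attached to $f$. The hypothesis enters at the finitely many bad primes: when $f_{/\Q_p}$ is \emph{isotropic}, the density $\beta_p(m)$ does not decay as $v_p(m) \to \infty$ but stabilizes to a positive constant depending only on $f$; it is precisely at \emph{anisotropic} primes (excluded here, and unavoidable when $n \leq 3$) that $\beta_p(m) \to 0$ along $m$ with growing $p$-adic valuation. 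Granting this, $\prod_p \beta_p(m) \gg_f 1$ uniformly, whence $a_{E_f}(m) \gg_f m^{n/2-1}$. I would then bound the cusp contribution: for $n \geq 5$ the trivial Hecke bound $|a_{C_f}(m)| \ll m^{n/4+\epsilon}$ already beats $m^{n/2-1}$ since $n/4 < n/2-1$; for $n = 4$ one needs a genuine power saving, supplied either by the Ramanujan--Petersson (Deligne) bound $|a_{C_f}(m)| \ll m^{n/4 - 1/2 + \epsilon}$ in integral weight or, classically, by Kloosterman's refinement of the circle method. Combining, for all sufficiently large $m \in D_{f,\loc}$,
\[ r_f(m) = a_{E_f}(m) + a_{C_f}(m) \geq c\, m^{n/2-1} - C\, m^{n/4 - 1/2 + \epsilon} > 0, \]
so $D_{f,\loc} \setminus D_f$ consists of finitely many bounded exceptions and $f$ is almost regular.

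\textbf{The main obstacle} is the third step: establishing the uniform lower bound on the singular series and pinning down exactly why isotropy at every prime is the right hypothesis. This rests on the explicit local computation of $\beta_p(m)$ at the ramified primes and on the stabilization (versus decay) dichotomy governed by local isotropy; it is the sole reason the hypothesis appears, and the only place where the quaternary case is delicate, as for $n \geq 5$ the local densities are bounded below with no hypothesis at all.
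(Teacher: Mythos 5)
The paper does not actually prove this theorem: it states it as a classical result of Tartakowsky, Kloosterman, Ross and Pall and cites \cite{Ross-Pall46}, so there is no in-house argument to match yours against step by step. That said, your outline is the standard analytic proof from the literature, and it is consistent with the machinery the paper does deploy. Your reduction showing the hypothesis forces $n \geq 4$ is exactly the parity-of-anisotropic-places argument the paper uses in proving Theorem \ref{THM1.10}, and your theta-series strategy --- Eisenstein plus cusp decomposition, a lower bound on the singular series over locally represented $m$, and a power-saving bound on the cuspidal coefficients --- is precisely what the paper carries out in Proposition \ref{PROP3.4} for the harder ternary case, where two extra complications appear (the weight-$3/2$ unary theta series $H(z)$ and the decay of $\beta_p$ at anisotropic primes) that are exactly the ones your hypothesis rules out. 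So your sketch is the ``right'' proof, and the dichotomy you isolate as the crux (stabilization of $\beta_p(m)$ as $v_p(m)\to\infty$ at isotropic primes versus decay at anisotropic ones) is genuine; it is the content of Hanke's stability analysis (Definition 3.6, Lemma 3.8, Corollary 3.8.2 of \cite{Hanke04}) that the paper quotes for the ternary case, so the gap you acknowledge is real but fillable from the cited sources.

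One quantitative overstatement should be flagged: the claim that $\prod_p \beta_p(m) \gg_f 1$ uniformly over locally represented $m$ is false for $n=4$. The good primes $p \mid m$ that are inert for the fixed quadratic character $\chi$ attached to $\disc f$ each contribute a factor roughly $1-\tfrac1p$, so if $m$ is divisible by many such primes the singular series decays like $\phi(k)/k$ for $k = \operatorname{rad}(m)$; the correct uniform bound is only $\gg_{\epsilon} m^{-\epsilon}$ (the paper makes the same move in Proposition \ref{PROP3.4}, via $\phi(k)/k \gg k^{-\epsilon}$). This costs nothing, since $m^{1-\epsilon}$ still dominates the Deligne bound $m^{1/2+\epsilon}$ for the weight-$2$ cuspidal part, but the inequality as written is not true. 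A second, purely cosmetic point: isotropy of forms in $\geq 5$ variables over $\Q_p$ is a local fact about $p$-adic fields rather than an instance of Hasse--Minkowski, though nothing in your argument depends on the attribution.
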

\noindent
When $n \geq 5$, every $n$-ary quadratic form over $\Z_p$ is isotropic, and thus we get:

\begin{cor}[Tartakowsky]
\label{TARTCOR}
For $n \geq 5$, every $n$-ary positive $f_{/\Z}$ is almost regular.
\end{cor}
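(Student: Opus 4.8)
The plan is to deduce the corollary immediately from Theorem~\ref{ROSSPALLTHM}. Since $f$ is assumed positive, that theorem reduces the problem to a purely local assertion: I only need to check that for every prime $p$ the localization $f_{/\Z_p}$ is isotropic, and then almost regularity follows at once. So the entire content of the corollary is the local isotropy input that is already flagged in the sentence preceding it.

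The key fact I would invoke is the classical statement from the Hasse--Minkowski local theory that every quadratic form in at least five variables over a $p$-adic field $\Q_p$ is isotropic; equivalently, the maximal dimension of an anisotropic $\Q_p$-form is $4$ (see, e.g., \cite{Lam}). Applying this to the base change of $f$ to $\Q_p$, which has $n \geq 5$ variables, produces a nonzero $v \in \Q_p^n$ with $f(v) = 0$. To upgrade this to isotropy over $\Z_p$, I would clear denominators: after scaling $v$ by a suitable power of $p$ we may assume $v \in \Z_p^n$ with at least one unit coordinate, so that $v$ is a nonzero (indeed primitive) vector of $\Z_p^n$ with $f(v) = 0$. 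Thus $f_{/\Z_p}$ is isotropic for every prime $p$.

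There is essentially no obstacle here, as the statement is a direct specialization of Theorem~\ref{ROSSPALLTHM}; the one point worth a sentence of care is the distinction between $\Q_p$- and $\Z_p$-isotropy, which the scaling argument above settles, together with the observation that the positive-definiteness of $f$ over $\Z$ imposes no constraint on its $p$-adic isotropy once $n \geq 5$. With local isotropy in hand at every $p$, Theorem~\ref{ROSSPALLTHM} applies verbatim and yields that $f$ is almost regular.
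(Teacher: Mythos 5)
Your proposal is correct and follows exactly the paper's route: the corollary is stated as an immediate consequence of Theorem~\ref{ROSSPALLTHM} together with the classical fact that every quadratic form in at least five variables over $\Q_p$ is isotropic. The only thing you add is the (standard, and correct) scaling remark passing from $\Q_p$-isotropy to $\Z_p$-isotropy, which the paper leaves implicit.
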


\begin{remark}
Combining Corollary \ref{TARTCOR} and Theorem \ref{UNTHM1}a) gives: if $n \geq 5$, a maximal positive
 $n$-ary $f_{/\Z}$ represents all sufficiently large integers.
\end{remark}
\noindent
There are also positive quaternary forms $f_{/\Z}$ such that $f_{/\Z_p}$ is isotropic for all primes $p$, e.g. $t_1^2 + t_2^2 +
t_3^2 + n t_4^2$ where $n \in \Z^+$ is not of the form $4^a(8b+1)$.

\begin{thm}[Bochnak-Oh \cite{Bochnak-Oh08}]
\label{BOCHNAKOH}
Let $f_{/\Z}$ be an almost regular positive quaternary form.  Then the set of anisotropic primes of $f$ is either
empty or consists of a single prime $p \leq 37$.
\end{thm}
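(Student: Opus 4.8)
The plan is to study the theta series $\theta_f(z) = \sum_{x \in \Z^4} q^{f(x)}$ with $q = e^{2\pi i z}$, which is a modular form of weight $2$ and level dividing $\disc(f)$, and to separate its main term from its error term. Write $\theta_f = E_f + C_f$, where $E_f$ lies in the Eisenstein subspace and $C_f$ is a cusp form. By Siegel's mass formula the $m$-th coefficient of $E_f$ is the genus representation number, which equals $c\, m \prod_p \beta_p(m)$ for an explicit $c > 0$ and local densities $\beta_p(m)$; it is positive exactly when $m \in D_{f,\loc}$. Since $r_f(m)$ is the $m$-th coefficient of $\theta_f$, the form $f$ fails to be almost regular precisely when there are infinitely many $m \in D_{f,\loc}$ with $a_{C_f}(m) = -a_{E_f}(m)$, and in particular $|a_{C_f}(m)| = a_{E_f}(m)$. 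When every finite prime is isotropic this cannot happen: the singular series is bounded below and $a_{E_f}(m) \gg_\varepsilon m^{1-\varepsilon}$, while Deligne's bound gives $a_{C_f}(m) \ll_\varepsilon m^{1/2+\varepsilon}$, so $a_{E_f}$ dominates for large $m$; this recovers Theorem \ref{ROSSPALLTHM}.

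The decisive input is the behavior of $\beta_p$ at an anisotropic prime. There the completion $f_{/\Q_p}$ is the reduced-norm form of the division quaternion algebra over $\Q_p$, whose maximal order is compact; a direct count of solutions of $f(x) \equiv p^{2k}u \pmod{p^j}$ (for $u$ a unit) shows $\beta_p(p^{2k}u) \asymp p^{-2k}$. Thus along the progression $m = p^{2k}u$ the archimedean growth factor $m = p^{2k}u$ in $a_{E_f}(m)$ is exactly cancelled, and the Eisenstein coefficient stays bounded: $a_{E_f}(p^{2k}u) = O_u(1)$. By contrast $|a_{C_f}(p^{2k}u)| \ll_\varepsilon (p^{2k}u)^{1/2+\varepsilon} = p^{k+\varepsilon}u^{1/2+\varepsilon}$ is permitted to grow. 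Hence, as soon as $C_f \neq 0$ and its coefficients along some such progression do not stay bounded, one obtains infinitely many locally represented $m$ with $r_f(m) = 0$, so $f$ is \emph{not} almost regular. Almost regularity therefore forces the cuspidal part to be ``invisible'' along every anisotropic-power progression.

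It remains to turn this dichotomy into the stated classification. For two distinct anisotropic primes $p_1, p_2$ one shows the genus has class number exceeding one, so $C_f \neq 0$, and moreover that its coefficients grow along the $p_1$-power progression; this contradicts almost regularity and leaves at most one anisotropic prime. For a single anisotropic prime $p$ the genus is governed by the definite quaternion algebra $B_{p,\infty}$, and the cuspidal space $C_f$ is read off from the associated Brandt module; almost regularity forces this space, together with the coefficient growth it would produce, to be controlled, which happens only for small $p$. The main obstacle is precisely the sharp threshold: extracting the bound $p \leq 37$ requires an explicit analysis of the weight-$2$ cusp forms attached to $B_{p,\infty}$ (equivalently of $S_2(\Gamma_0(p))$ and its $p$-power Hecke action), and a careful estimate showing that for $p > 37$ the cuspidal coefficients necessarily overtake the bounded Eisenstein term infinitely often. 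I expect this final step — rather than the soft structural argument above — to carry the real weight of the proof.
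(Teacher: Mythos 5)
First, note that the paper does not prove this statement at all: Theorem \ref{BOCHNAKOH} is quoted verbatim from Bochnak--Oh \cite{Bochnak-Oh08} as an external input, so there is no internal proof to compare yours against. Your sketch does capture the correct analytic skeleton for this circle of ideas (decompose $\theta_f$ into an Eisenstein part whose coefficients are the genus averages $c\,m\prod_p \beta_p(m)$ and a cusp form bounded by Deligne, and observe that at an anisotropic prime $p$ the decay $\beta_p(p^{2k}u)\asymp p^{-2k}$ exactly cancels the factor $m$, so $a_{E_f}$ stays bounded along the progression $m=p^{2k}u$ while it grows like $m^{1-\varepsilon}$ otherwise). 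That much is the standard mechanism behind Theorem \ref{ROSSPALLTHM} and behind examples like $t_1^2+t_2^2+7t_3^2+7t_4^2$ in the paper.

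However, the argument has a genuine logical gap at its pivot, and the quantitative heart of the theorem is missing entirely. The step ``as soon as $C_f\neq 0$ and its coefficients along some such progression do not stay bounded, one obtains infinitely many locally represented $m$ with $r_f(m)=0$'' is false: since $r_f(m)=a_{E_f}(m)+a_{C_f}(m)\geq 0$ automatically, boundedness of $a_{E_f}$ along the progression forces $a_{C_f}(m)\geq -O(1)$ there, but $a_{C_f}(m)$ may perfectly well be large and \emph{positive} infinitely often without ever producing an exception; failure of almost regularity requires the \emph{exact} cancellation $a_{C_f}(m)=-a_{E_f}(m)$ infinitely often, and no upper bound on $|a_{C_f}|$ (nor any ``growth'' statement) can manufacture that. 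Relatedly, ``class number $>1$ implies $C_f\neq 0$'' does not follow (distinct classes can have equal theta series), and $C_f\neq 0$ does not imply non-regularity. Finally, the two assertions that actually constitute the theorem --- that two anisotropic primes are impossible for an almost regular form, and that a single anisotropic prime must satisfy $p\leq 37$ --- are deferred to ``one shows'' and ``a careful estimate.'' The correct mechanism (used by Bochnak--Oh, and visible in the paper's Example 1.20c)) is stability: at an anisotropic prime, $r_f(p^2m)=r_f(m)$ for $m$ in the stable range, so a \emph{single} exception in a progression propagates to infinitely many; controlling when such an exception must exist is an arithmetic problem about the Brandt module/spinor genus, not a size estimate, and that is where the bound $37$ comes from. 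As written, your text is a plausible reading plan for \cite{Bochnak-Oh08}, not a proof.
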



\begin{example}\mbox{ }
\begin{itemize}
\item[a)] The binary quadratic form $t_1^2 + 14 t_2^2$ is \emph{not} almost regular.  The set of prime numbers that it represents locally
has relative density $\frac{1}{4}$, whereas the set of prime numbers that it represents has relative density $\frac{1}{8}$.
\item[b)]  The ternary quadratic form $3t_1^2 + 4 t_2^2 + 9 t_3^2$ is \emph{not} almost regular (more details are given in Example \ref{ROUSEEX}).
\item[c)] The quadratic form $f = t_1^2 + t_2^2 + 7t_3^2 + 7t_4^2$ locally represents all positive integers. Since $f \equiv 0 \pmod{49}$ if and only if $t_{1} \equiv t_{2} \equiv t_{3} \equiv t_{4} \equiv 0 \pmod{7}$, then $f$ is anisotropic at $7$. The fact that $f$ does not represent $3$, $6$, $21$ or $42$ shows that $f$ fails to represent $3 \cdot 7^{k}$ or $6 \cdot 7^{k}$ for any integer $k \geq 0$. This shows that $f$ is \emph{not} almost regular (as was known to Watson \cite[p. 121]{Watson}). More effort shows that $f$ represents every positive integer except those of the form $3 \cdot 7^{k}$ or $6 \cdot 7^{k}$.
\end{itemize}
\end{example}

\noindent
So positive forms in fewer than five variables need not be almost regular.  However, there is a more permissive -- but analytically natural --
sense in which every quadratic form is ``nearly regular.''  Put
\[ D_{f,\loc}(X) \coloneqq D_{f,\loc} \cap [-X,X] \]
and
\[\mathcal{E}_f(X) \coloneqq D_{f,\loc}(X) \setminus D_f(X). \]
Let us also put 
\[ \delta_{\loc}(f) \coloneqq \delta(D_{f,\loc}). \]
For a prime number $p$, let $\mu_p$ be the unique Haar measure on $\Z_p$ with $\mu_p(\Z_p) = 1$.  For an $n$-ary integral quadratic form $f$, we define $\delta_p(f)$ to be $\mu_p( \{ f(x_1,\ldots,x_n) \mid (x_1,\ldots,x_n) \in \Z_p^n \})$.     (These are the quantities $\delta_p(f)$ that appear in Theorem \ref{MAIN2THM}.)
\\ \\
We have the following two key results.

\begin{thm}[Product Formula]
\label{PRODFORMTHM}
Let $f_{/\Z}$ be an $n$-ary quadratic form.  Then:
\begin{equation}
\label{PRODUCTFORMULA}
 \delta_{\loc}(f) = \prod_p \delta_p(f), 
\end{equation}
the product extending over all prime numbers.
\end{thm}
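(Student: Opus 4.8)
The plan is to recognize $D_{f,\loc}$, up to the archimedean sign condition, as the preimage in $\Z$ of a product set inside the profinite completion $\hat{\Z}=\prod_p\Z_p$, and then to count integers in it by equidistribution. For each prime $p$ set $S_p\coloneqq f(\Z_p^n)$. Since $\Z_p^n$ is compact and $f$ is continuous, $S_p$ is a compact (hence closed and measurable) subset of $\Z_p$, and $\mu_p(S_p)=\delta_p(f)$ by definition. An integer $m$ is locally represented exactly when $f$ represents $m$ over $\R$ -- that is, $m>0$, or $m<0$, or any $m$, according as $f$ is positive, negative, or indefinite -- and $m\in S_p$ for every $p$. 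The archimedean condition merely selects the interval ($[1,N]$, $[-N,-1]$, or $[-N,N]$) used in the definition of density, so it contributes no separate factor; it therefore suffices to compute the density of $\{m\in\Z: m\in S_p\text{ for all }p\}$ within that interval, which under the diagonal embedding $\Z\hookrightarrow\hat{\Z}$ is $\Z\cap\prod_p S_p$.

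The elementary input is that $\Z$ equidistributes in $\hat{\Z}$: for any modulus $M$ and any set $U$ of residues mod $M$, the set $\{m: m\bmod M\in U\}$ has density $|U|/M$, by CRT together with the uniform distribution of integers in residue classes. For the \emph{upper} bound I would fix finitely many primes $p_1,\dots,p_r$ and levels $k_i$, and let $U_{p_i}^{(k_i)}\subseteq\Z_{p_i}$ be the union of residue classes mod $p_i^{k_i}$ that meet $S_{p_i}$. Because $S_p$ is closed, the $U_p^{(k)}$ decrease to $S_p$ and $\mu_p(U_p^{(k)})\downarrow\delta_p(f)$ by continuity of measure. Since $\Z\cap\prod_pS_p\subseteq\{m: m\bmod p_i^{k_i}\in U_{p_i}^{(k_i)}\text{ for }i\le r\}$, equidistribution gives $\overline{\delta}\le\prod_{i\le r}\mu_{p_i}(U_{p_i}^{(k_i)})$; letting each $k_i\to\infty$ and then $r\to\infty$ yields $\overline{\delta}(D_{f,\loc})\le\prod_p\delta_p(f)$.

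For the matching lower bound I would first dispose of the case where $\delta_p(f)<1$ for infinitely many $p$. At every prime $p\nmid 2\disc f$ one has $\delta_p(f)=1$ when $n\ge 3$ (a nondegenerate form in at least $3$ variables is isotropic modulo $p$, hence universal over $\Z_p$), whereas for $n\le 2$ a good prime with $\delta_p(f)<1$ is inert and contributes $1-\delta_p(f)\asymp 1/p$; thus infinitely many such primes force $\prod_p\delta_p(f)=0$, and the upper bound already gives $\delta_{\loc}(f)=0$. So we may assume $T\coloneqq\{p:\delta_p(f)<1\}$ is finite. Then $m$ is locally represented if and only if $m\in S_p$ for the finitely many $p\in T$, since $S_p=\Z_p$ for $p\notin T$. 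By CRT these conditions become independent congruence/$p$-adic conditions, so $\delta_{\loc}(f)=\prod_{p\in T}(\text{density of }\{m\in\Z: m\in S_p\})$, and it remains to show that for each $p$ this one-prime density exists and equals $\mu_p(S_p)=\delta_p(f)$.

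The crux, and the step I expect to require the most care, is the claim $\mu_p(\partial S_p)=0$, equivalently $\mu_p(\operatorname{int}S_p)=\delta_p(f)$: granting it, the one-prime density is squeezed between the exactly computable densities of $\operatorname{int}S_p$ and of the outer coset sets $U_p^{(k)}$, both of which converge to $\delta_p(f)$. To prove the lemma I would show that every value with a nonsingular representation is an interior point: if $f(v)=m$ with $Bv\not\equiv 0\bmod p$ (here $B$ is the Gram matrix), then Hensel's lemma solves $f=m'$ for every $m'$ near $m$, so an entire neighborhood of $m$ lies in $S_p$. The non-interior values thus all lie in $f(\{v: Bv\equiv 0\bmod p\})$; writing such $v$ as $v_0+pw$ with $v_0\in\ker\bar{B}$ and using $Bv_0\equiv 0$ gives $f(v_0+pw)=f(v_0)+p^2 g(w)$ for a quadratic form $g$, exhibiting the non-interior part as a union of $p^2$-contracted shifted copies of represented sets. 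This self-similarity yields a recursive bound on $\mu_p(\partial S_p)$ that iterates to $0$ -- after reducing to primitive $f$ by factoring out the content, and after replacing the naive mod-$p$ Hensel criterion by its $\Z_2$-adapted refinement when $p=2$. It is this boundary-nullity lemma, rather than the equidistribution bookkeeping, that I expect to be the main obstacle.
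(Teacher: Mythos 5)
Your overall architecture is sound and your treatment of the upper bound and of the degenerate cases ($n=1$, or $n=2$ anisotropic, where divergence of $\sum 1/p$ over the inert primes kills the product) agrees with the paper's. Where you genuinely diverge is the lower bound. The paper never needs a boundary-nullity lemma: it uses the structural fact, established in its local analysis, that $D_{f_{/\Z_p}}$ is a finite union of sets $\{y \in s\Q_p^{\times 2} : v_p(y) \ge v_s\}$. Truncating at valuation $K$ produces a set $U_{p,K}$ that is simultaneously (i) contained in $S_p \coloneqq f(\Z_p^n)$, (ii) a finite union of residue classes modulo $p^{K+2}$, and (iii) of measure increasing to $\delta_p(f)$; the sandwich $U_{p,K} \subseteq S_p \subseteq U_{p,K} \cup p^K\Z_p$ then gives matching finite-level inner and outer approximations, and CRT finishes the proof. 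In particular $S_p \setminus \{0\}$ is open (indeed clopen in $\Z_p \setminus \{0\}$), so $\partial S_p \subseteq \{0\}$ and the measure-zero-boundary statement you identify as the crux is immediate -- no Hensel argument or singular-locus recursion required. What the paper's route buys is exactly the avoidance of the step you flag as the main obstacle; what your route would buy, if completed, is independence from the explicit square-class classification.

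That said, as written your proof is not complete, and the gap is precisely where you predicted. The claim that the self-similar decomposition $f(v_0 + pw) = f(v_0) + p^2 g_{v_0}(w)$ ``yields a recursive bound on $\mu_p(\partial S_p)$ that iterates to $0$'' is asserted, not proved: the singular locus $\{v : Bv \equiv 0 \bmod p\}$ is a union of $p^{n-\operatorname{rank}\bar{B}}$ cosets of $p\Z_p^n$, so the naive estimate (number of cosets) $\times\, p^{-2}$ need not be small, and the polynomials $g_{v_0}$ are inhomogeneous, so the recursion does not obviously return to the same class of objects with a contracting constant. You would also need to justify the passage from ``each $p$-adic condition has a well-defined density equal to $\mu_p(S_p)$'' to ``the density of the simultaneous conditions is the product''; this is fine once you squeeze between finite-level unions of residue classes inside $\operatorname{int} S_p$ and outside $S_p$ and apply CRT at finite level, but it is not a formal consequence of CRT alone for sets that are not periodic. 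Both gaps close instantly if you first prove (or import) the description of $S_p$ as a union of valuation-truncated square classes, which is the route the paper takes.
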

\noindent
We will see later that if $n \leq 2$ and $f$ is anisotropic, then $\prod_p \delta_p(f) = 0$.  In every other case we have 
$\delta_p(f) = 1$ for all sufficiently large $p$, so the product is actually finite.   Thus Theorem \ref{PRODFORMTHM}  implies 
that $\delta_{\loc}(f)$ exists.

\begin{thm}[Near Regularity]
\label{NEARREGTHM}
Let $f_{/\Z}$ be an $n$-ary quadratic form.\begin{itemize}
\item[a)] As $X \ra \infty$,
\begin{equation}
\label{EXCEPTIONALEQ}
\#\mathcal{E}_f(X) = o(D_f(X)).
\end{equation}
\item[b)] If $n \geq 3$ we have $\#\mathcal{E}_f(X) = O(\sqrt{X})$ {\rm (}whereas Theorem \ref{THM1.9} gives $D_f(X) \gg X${\rm )}.
\end{itemize}
\end{thm}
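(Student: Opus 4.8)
The plan is to establish the quantitative bound (b) for $n \geq 3$ and to deduce (a) from it in that range, treating the remaining small ranks by hand. Since $\mathcal{E}_f(X) = D_{f,\loc}(X) \setminus D_f(X)$, for $n \geq 3$ Theorem~\ref{THM1.9} gives $\#D_f(X) \gg X$, so an $O(\sqrt{X})$ bound forces $\#\mathcal{E}_f(X) = o(\#D_f(X))$; thus (a) follows from (b) whenever $n \geq 3$. For $n = 1$, a positive integer that is a square in every $\Z_p$ and in $\R$ is a perfect square, so $at^2$ represents every integer it locally represents and $\mathcal{E}_f = \emptyset$. Replacing $f$ by $-f$, I may assume $f$ is positive or indefinite; the binary case of (a) is deferred to the end. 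Finally, by Corollary~\ref{TARTCOR} (Tartakowsky) -- or, more generally, Theorem~\ref{ROSSPALLTHM} (Ross--Pall) whenever $f$ is positive and isotropic over every $\Z_p$, which is automatic for $n \geq 5$ -- such $f$ is almost regular and $\#\mathcal{E}_f(X) = O(1)$. This reduces (b) to positive forms anisotropic at some prime with $n \in \{3,4\}$, together with the indefinite forms.

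For indefinite $f$ of rank $n \geq 3$, strong approximation for the spin group (Eichler--Kneser) shows that each spinor genus of $f$ consists of a single class, so $m$ is represented by $f$ as soon as it is represented by the spinor genus. The only surviving local-global obstruction is spinor-exceptionality, and the spinor exceptional integers lie in finitely many square classes $t\,(\Q^\times)^2$; as $\#\{s : t s^2 \leq X\} \ll \sqrt{X}$, this gives $\#\mathcal{E}_f(X) = O(\sqrt{X})$. For positive $f$ with $n = 4$ and a (unique, by Theorem~\ref{BOCHNAKOH}) anisotropic prime $p \leq 37$, I pass to the weight-$2$ theta series and write $\theta_f = E_f + C_f$ with $E_f$ Eisenstein and $C_f$ cuspidal. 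Here $a_{E_f}(m)$ is the genus representation number, which for locally represented $m$ is $\gg_\varepsilon m^{1-\varepsilon}$ times a factor at $p$ decaying like $p^{-v_p(m)/2}$, while $|a_{C_f}(m)| \ll m^{1/2+\varepsilon}$ by Deligne. The Eisenstein term beats the cusp term unless $m$ is concentrated at $p$, so every large locally represented $m$ outside a set of the form $\{c\,p^{k}\}$ (finitely many $c$) is represented, giving $\#\mathcal{E}_f(X) = O(\log X)$.

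The crux, and the step I expect to be hardest, is positive ternary $f$ anisotropic at some prime. Its theta series has half-integral weight $3/2$; writing $\theta_f = E_f + C_f$ again, $a_{E_f}(m)$ is the genus representation number, which for locally represented $m$ outside finitely many spinor exceptional square classes is $\gg_\varepsilon m^{1/2-\varepsilon}$, whereas the subconvex bound of Iwaniec and Duke for Fourier coefficients of half-integral weight cusp forms gives $|a_{C_f}(m)| \ll m^{1/2-\delta}$ for some absolute $\delta > 0$. Hence $r_f(m) = a_{E_f}(m) + a_{C_f}(m) > 0$ for all sufficiently large locally represented $m$ off the spinor exceptional classes (this is the Duke--Schulze-Pillot equidistribution mechanism), and such $m$ are represented. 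The remaining exceptions are the spinor exceptional integers, which by Schulze-Pillot's classification fill finitely many square classes and hence number $O(\sqrt{X})$; this both proves (b) for $n = 3$ and shows the exponent $1/2$ is sharp. The genuine difficulties are the subconvexity estimate that makes the cusp term lose to the Eisenstein term and the structural description of spinor exceptional integers; the descent from general $m$ to squarefree (primitively represented) $m$ via sublattices is routine bookkeeping and does not spoil the $O(\sqrt{X})$.

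It remains to prove (a) for binary $f$. Through the dictionary between binary forms of discriminant $\Delta$ and ideal classes in the order of discriminant $\Delta$, an integer is locally represented by $f$ exactly when it is represented by the genus of $f$, so $D_{f,\loc} = \bigcup_{g \in \operatorname{gen}(f)} D_g$. By Theorem~\ref{MAIN2THM} each class satisfies $\#D_g(X) \sim \kappa X/\log^{1/2}X$ with $\kappa$ depending only on $\Delta$ in the anisotropic case (and $\#D_g(X) \asymp X$ in the isotropic case), so it suffices to show that forms in one genus represent almost the same integers, i.e.\ $\#(D_{g}(X) \setminus D_{g'}(X)) = o(X/\log^{1/2}X)$. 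The point is that an integer with sufficiently many split prime factors is a norm from every ideal class in the genus, since interchanging conjugate prime ideals translates the class by the subgroup generated by the squares of the split-prime classes; once that subgroup exhausts the relevant group, such $m$ lies in $D_{g'}$ as well. Bounding the deficient $m$ is a Selberg--Delange computation: their indicator is detected by nonprincipal characters of the class group, whose Dirichlet series are holomorphic and nonzero at $s = 1$ and so contribute a strictly smaller power of $\log X$, yielding $o(X/\log^{1/2}X)$ (respectively $o(X)$). With $\#D_f(X)$ of the same order, this is exactly $o(\#D_f(X))$, completing (a).
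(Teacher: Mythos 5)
Your case division matches the paper's almost exactly (Tartakowsky for $n\ge 5$, spinor genus theory for indefinite $n\ge 3$, theta series for positive ternaries, genus theory for anisotropic binaries), but there is one genuine technical gap in the hardest case. For positive ternary $f$ you write $\theta_f = E_f + C_f$ and assert that the Duke--Iwaniec subconvexity bound gives $|a_{C_f}(m)| \ll m^{1/2-\delta}$ for the full cuspidal part. That bound is false as stated: the space of weight $3/2$ cusp forms contains the unary theta series $\sum_n \psi(n)\, n\, q^{n^2}$, whose coefficients at square arguments have size exactly $m^{1/2}$, and $\theta_f$ can (and in examples like $3x^2+4y^2+9z^2$ does) have a nonzero component along them. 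The subconvex bound applies only to the orthogonal complement of the unary theta series. The paper's proof of Proposition \ref{PROP3.4} is built precisely around this point: it decomposes $\theta_f = E + H + C$, applies Blomer's bound only to $C$, and handles $H$ separately by observing that its support lies in finitely many rational square classes, contributing $O(\sqrt{X})$. Your appeal to "spinor exceptional integers filling finitely many square classes" is the right compensating idea (the $H$-part is exactly the spinor-genus discrepancy), but as written your inequality for $a_{C_f}(m)$ is wrong and the argument does not close without the three-term decomposition. A second, smaller misstep: you invoke Theorem \ref{BOCHNAKOH} to get a unique anisotropic prime for positive quaternaries, but that theorem \emph{assumes} almost regularity, which is essentially what you are trying to prove; fortunately your quaternary argument does not actually need uniqueness.

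Where your routes genuinely differ from the paper's, they are defensible but costlier. For positive quaternaries the paper avoids modular forms entirely, quoting Cassels' theorem that large locally primitively represented integers are primitively represented and then descending through square classes to get $O(\sqrt{X})$; your weight-$2$ Eisenstein-versus-Deligne argument reproves that input. For isotropic binary forms -- the one case where the paper supplies a genuinely new, self-contained proof -- your ideal-class/Selberg--Delange sketch is aimed at nonsquare discriminants; here $\Delta$ is a perfect square, the "quadratic field" degenerates to $\Q\times\Q$, and the split-prime/conjugate-ideal mechanism has to be reinterpreted. The paper instead reduces to $f = Ax^2+Bxy = x(Ax+By)$ and shows elementarily (via the fact that almost all integers have a prime factor in any prescribed residue class mod $M$, plus a powerfree reduction) that almost every locally represented $n$ has a divisor $x$ in the right congruence class mod $B$. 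Your sketch would need to be rebuilt along these lines for the square-discriminant case; as it stands, "once that subgroup exhausts the relevant group" is doing a lot of unjustified work.
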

\noindent
These results have the following immediate consequence:

\begin{cor}[Density Hasse Principle]
\label{DENSITYHASSE}
For all $f_{/\Z}$, we have \[\delta(f) = \delta_{\loc}(f) = \prod_p \delta_p(f). \]
\end{cor}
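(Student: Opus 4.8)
The plan is to read the corollary off directly from the two preceding theorems, so that the only real content is bookkeeping about normalizations. The second equality $\delta_{\loc}(f) = \prod_p \delta_p(f)$ is \emph{precisely} the Product Formula (Theorem~\ref{PRODFORMTHM}), which moreover guarantees that the limit defining $\delta_{\loc}(f)$ exists. Everything therefore reduces to the first equality $\delta(f) = \delta_{\loc}(f)$, and indeed to showing that the limit defining $\delta(f)$ exists at all.

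First I would record the exact relation between the counting functions. Since $D_f \subseteq D_{f,\loc}$ and $\mathcal{E}_f(X) = D_{f,\loc}(X) \setminus D_f(X)$, we have for every $X$ the identity $\#D_{f,\loc}(X) = \#D_f(X) + \#\mathcal{E}_f(X)$. Here I want to check that $D_f$ and $D_{f,\loc}$ carry the same normalization in the definition of density: a positive (resp.\ negative) form represents only nonnegative (resp.\ nonpositive) reals, so every locally represented integer has the same sign behavior, and $D_{f,\loc}$ is positive, negative, or indefinite exactly when $D_f$ is. Writing $c \in \{1,2\}$ for the common normalizing constant ($c = 2$ in the indefinite case, $c = 1$ otherwise), we then have $\delta(f) = \lim_{X \to \infty} \#D_f(X)/(cX)$ and $\delta_{\loc}(f) = \lim_{X \to \infty} \#D_{f,\loc}(X)/(cX)$ whenever these limits exist.

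Next I would invoke Near Regularity (Theorem~\ref{NEARREGTHM}a), which gives $\#\mathcal{E}_f(X) = o(\#D_f(X))$. Because $f$ is nondegenerate it represents infinitely many integers, so $\#D_f(X) \to \infty$ and in particular $\#D_f(X) > 0$ for all large $X$; hence $\#\mathcal{E}_f(X)/\#D_f(X) \to 0$. Substituting into the identity above yields $\#D_{f,\loc}(X) = \#D_f(X)\,(1 + o(1))$, equivalently $\#D_f(X) = \#D_{f,\loc}(X)\,(1 + o(1))$. Dividing by $cX$ and letting $X \to \infty$, the right-hand side converges to $\delta_{\loc}(f)$ (the factor $1 + o(1)$ tends to $1$), so the limit defining $\delta(f)$ exists and equals $\delta_{\loc}(f)$. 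This argument is uniform in whether or not $\delta_{\loc}(f) = 0$; in the latter direction one may alternatively note that $\delta_{\loc}(f) = 0$ forces $\#D_f(X) \le \#D_{f,\loc}(X) = o(X)$, whence $\delta(f) = 0$ by a trivial squeeze that does not even need Near Regularity.

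I do not expect a genuine obstacle here: the corollary really is immediate once the Product Formula and Near Regularity are available, all the difficulty having been absorbed into those two results. The only points deserving a sentence of care are the matching of the normalizing constant $c$ for $D_f$ and $D_{f,\loc}$, and the observation that $\#D_f(X) \to \infty$, which is what legitimizes converting the $o(\cdot)$ in Near Regularity into a ratio tending to $0$.
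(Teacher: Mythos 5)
Your proposal is correct and matches the paper's intent exactly: the paper states Corollary \ref{DENSITYHASSE} as an ``immediate consequence'' of Theorems \ref{PRODFORMTHM} and \ref{NEARREGTHM}, and your write-up simply makes explicit the bookkeeping (the identity $\#D_{f,\loc}(X) = \#D_f(X) + \#\mathcal{E}_f(X)$, the matching normalization for $D_f$ and $D_{f,\loc}$, and the fact that $\#D_f(X)\to\infty$) that justifies that word. The two points you flag as deserving care are indeed the only ones, and you handle both correctly.
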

\noindent
Corollary \ref{DENSITYHASSE} shifts the work of computing $\delta(f)$ to the more tractable setting of
quadratic forms over $\Z_p$.   In fact we carry out this local analysis first in \S 2.  In \S 3 we prove Theorem \ref{PRODFORMTHM}.
We prove Theorem \ref{NEARREGTHM} 
in \S 4, in fact with an explicit error bound that depends on the number of variables and whether $f$ is definite or indefinite.  
  In \S 5 we prove a globalization result, which is used in \S 6 along with 
Corollary \ref{DENSITYHASSE} to prove the remaining results.

\section{The local case}

\subsection{The representation table and the local representation measure}
In this paper, a \textbf{local field} is a complete discretely valued field $(K,v)$ of characteristic different from $2$, with valuation ring $R$, uniformizing element $\pi$,
and finite residue field $R/\pi R \cong \F_q$.  We say that $K$ (or $R$) is \textbf{nondyadic} if $q$ is odd and \textbf{dyadic} if
$q$ is even.  Let $\mathcal{S} \coloneqq K^{\times}/K^{\times 2}$ be the set of square classes of $K$. By a slight abuse of notation, we will also denote by $\mathcal{S}$ a chosen set of representatives for $K^{\times 2}$ in
$K^{\times}$.   \\ \indent
If $R$ is non-dyadic, we fix $r \in R^{\times}$ such that $r + \pi R \in \F_q^{\times} \setminus \F_q^{\times 2}$.  Then as coset 
representatives for $K^{\times 2}$ in $K$ we may take \cite[Thm. VI.2.2(1)]{Lam}
\[ \mathcal{S} = \{1,r,\pi,r\pi\}. \] If $R$ is dyadic then $\# \mathcal{S} = 2^{[K:\Q_2]+2}$ \cite[Cor. VI.2.23]{Lam}.  When $K = \Q_2$, as coset 
representatives for $\Q_2^{\times 2}$ in $\Q_2^{\times}$ we take
\[ \mathcal{S} = \{1,3,5,7,2,6,10,14\}. \]
For $n \geq 1$, let $f_{/R}$ be an $n$-ary quadratic form. In contrast to the global case, we can give a finitistic description of $D_f$ in all cases.  Namely,
for $s \in \mathcal{S}$, let $v_s$ be the minimal valuation $v(x)$ of an element $x \in D_f$ such that $x \in sK^{\times 2}$, or $\infty$
if no such element exists.  We refer to the assignment
\[ s \in \mathcal{S} \mapsto v_s \in \N \cup \{\infty\} \]
as the \textbf{representation table} of $f$.  Let $s \in \mathcal{S}$ with $v_s < \infty$.  Then there is $x \in D_f \cap sK^{\times 2}$
with valuation $v_s$; moreover, for any element $y \in sK^{\times 2}$ with $v(y) = v_s$ we have $y = u^2 x$ for some $u \in R^{\times}$, so $y \in D_f$.  Moreover, since $x \in D_f$, so is $\pi^{2k} x$ for all $k \geq 0$.  It follows that
\[ D_f \cap s K^{\times 2} = \{ y \in R^{\bullet} \mid y \in s K^{\times 2} \text { and } v(y) \geq v_s\}, \]
and thus knowing the representation table determines $D_f$.

\begin{remark}
Let $f_{/R}$ be an $n$-ary quadratic form.  Here is some information on the finiteness / infiniteness of the entries in the representation table of $f$. 
\begin{itemize}
\item[a)] Let $s \in \mathcal{S}$.  Then $v_s < \infty$ if and only if $f$ $K$-represents $s$.    
\item[b)] If $f$ is isotropic, then $f_{/K}$ is $K$-universal, so $v_s < \infty$ for all $s \in \mathcal{S}$.  
\item[c)] For a unary form we have $v_s < \infty$ for a unique $s \in \mathcal{S}$.  
\item[d)] If $f$ is an anisotropic binary form, then $f_{/K}$ $K$-represents precisely half of the elements of $\mathcal{S}$ \cite[p. 184, Exc. 8]{Lam}, so precisely half of the entries of the representation table of $f$ will be finite. If $K$ is nondyadic, we will shortly see by direct calculation that all $6$ possible pairs of square classes can arise this way.  
On the other hand, when $K = \Q_2$, the inequality ${8 \choose 4} > 8^2$ shows that not all $4$ element subsets of $\mathcal{S}$ 
arise as the set of square classes represented by an anisotropic binary quadratic form $f_{/\Q_2}$.   A straightforward asymptotic 
analysis shows as $a \coloneqq [K:\Q_2]$ tends to infinity, the number of $2^{a+1}$ element subsets of $\mathcal{S}$ arising 
as the set of square classes $K$-represented by an anisotropic binary quadratic form $f_{/K}$ divided by the number $\binom{2^{a+2}}{2^{a+1}}$ of $2^{a+1}$ element subsets of $\mathcal{S}$ approaches $0$.
\item[e)] If $f$ is an anisotropic ternary form, then $f_{/K}$ $K$-represents every $s \in \mathcal{S}$ except the class of 
$-\disc(f)$ and does not represent $-\disc(f)$ \cite[Cor. 2.15(2)]{Lam}.  So in this case exactly one entry of the representation table 
will be infinite, and by scaling any one anisotropic ternary form we see that this infinite entry can be any $s \in \mathcal{S}$.  
\item[f)] If $n \geq 4$, then $f_{/K}$ is universal \cite[Cor. 2.11 and Cor. 2.15(1)]{Lam}, so every entry in the representation table is finite.  Moreover, if $n \geq 5$ then $f$ is isotropic \cite[Thm. 2.12]{Lam}.
\end{itemize}
\end{remark}
\noindent
Let $\mu$ be the Haar measure on $R$ with unit mass.  Then we define the \textbf{local representation measure}
\[ \delta_v(f) \coloneqq \mu(D_f). \]
When $K = \Q_p$ we write $\delta_p(f)$ instead.  We also define
\[ \nu(R) \coloneqq [R^{\times}:R^{\times 2}], \]
so $\nu(R) = 2$ if $q$ is odd and $\nu(R) = 2^{[K:\Q_2]+1}$ is $q$ is even.
\\ \\
Let $a \in R^{\bullet}$ have valuation $v$.  If $u \coloneqq \frac{a}{\pi^v}$ then we get
\[ a R^{\times 2} = \pi^v (u R^{\times 2}). \]
For any measurable $Y \subset R$ and $x \in R^{\bullet}$, we have $\mu(x Y) = \frac{\mu (Y)}{q^{v(x)}}$.  It follows that
\[ \mu(R^{\times}) = \mu(R \setminus \pi R) = \frac{q-1}{q}, \]
\[ \mu(u R^{\times 2}) = \frac{q-1}{\nu(R) q} \]
and finally that
\[ \mu(a R^{\times 2}) = \frac{q-1}{\nu(R) q^{v+1}}. \]
Let $s \in \mathcal{S}$ be such that $v_s < \infty$, and choose $x \in D_f \cap s K^{\times 2}$ with $v(x) = v_x$.  Then
\begin{multline*} \mu(D_f \cap s K^{\times 2}) = \mu\left(\coprod_{i=0}^{\infty} \pi^{2i} x R^{\times 2}\right) \\ = \mu(x R^{\times 2})\left(1 + \frac{1}{q^2} + \frac{1}{q^4} + \ldots\right) = \frac{q^2}{q^2-1} \mu (x R^{\times 2}) = \frac{1}{\nu(R) q^{v_s-1}(q+1)}. \end{multline*}
When $v_s = \infty$, we take the expression $\frac{1}{\nu(R) q^{v_s-1}(q+1)}$ to be $0$.  This yields a formula for the local representation measure $\delta_v(f)$ in terms of the representation table:

\begin{thm}
\label{TABLEMEASURETHM}
Let $n \in \Z^+$, and let $f_{/R}$ be an $n$-ary quadratic form.  Then we have
\begin{equation}
\label{LOCALDENEQ}
\delta_v(f) = \sum_{s \in \mathcal{S}} \frac{1}{\nu(R) q^{v_s-1}(q+1)}.
\end{equation}
\end{thm}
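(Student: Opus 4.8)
The plan is to assemble the theorem from the displayed computations immediately preceding it; the only genuinely new ingredient is countable additivity of $\mu$ applied to the decomposition of $D_f$ by square classes. First I would record the disjoint decomposition
\[
D_f = \coprod_{s \in \mathcal{S}} \left( D_f \cap sK^{\times 2} \right),
\]
which is valid because $D_f = f(R^n) \setminus \{0\}$ contains no zero element and every element of $R^{\bullet}$ lies in exactly one square class $sK^{\times 2}$ with $s \in \mathcal{S}$. Each piece is measurable: by the representation-table analysis it equals $\{y \in R^{\bullet} \mid y \in sK^{\times 2},\ v(y) \geq v_s\}$, which for $v_s < \infty$ is the countable disjoint union $\coprod_{i \geq 0} \pi^{2i} x R^{\times 2}$ for any chosen $x \in D_f \cap sK^{\times 2}$ of valuation $v_s$, and is empty precisely when $v_s = \infty$.

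Next, since $\mathcal{S}$ is finite ($\#\mathcal{S} = 4$ in the nondyadic case and $2^{[K:\Q_2]+2}$ in the dyadic case), additivity of the Haar measure $\mu$ gives
\[
\delta_v(f) = \mu(D_f) = \sum_{s \in \mathcal{S}} \mu\left( D_f \cap sK^{\times 2}\right).
\]
I would then substitute the per-class value $\mu(D_f \cap sK^{\times 2}) = \frac{1}{\nu(R) q^{v_s - 1}(q+1)}$ already obtained from the geometric series $\sum_{i \geq 0} q^{-2i} = \frac{q^2}{q^2-1}$ together with $\mu(xR^{\times 2}) = \frac{q-1}{\nu(R)q^{v_s+1}}$. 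With the stated convention that this expression equals $0$ when $v_s = \infty$ --- which matches the emptiness of the corresponding piece --- the sum is exactly formula \eqref{LOCALDENEQ}.

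I do not anticipate any real obstacle, since the scaling relation $\mu(xY) = q^{-v(x)}\mu(Y)$, the index computation, and the summation of the geometric series have all been carried out above. The only points deserving a moment's care are the disjointness of the decomposition (which uses $0 \notin D_f$) and the reconciliation of the $v_s = \infty$ convention with the fact that the corresponding square-class piece is genuinely empty.
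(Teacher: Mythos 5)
Your proposal is correct and matches the paper's argument exactly: the paper likewise partitions $D_f$ into the finitely many pieces $D_f \cap sK^{\times 2}$, computes $\mu(D_f \cap sK^{\times 2}) = \frac{1}{\nu(R)q^{v_s-1}(q+1)}$ via the same geometric series over $\coprod_{i\geq 0}\pi^{2i}xR^{\times 2}$, and sums over $\mathcal{S}$ with the same convention for $v_s = \infty$. Nothing is missing.
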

\noindent
Here is an immediate consequence:

\begin{cor}
Let $n \in \Z^+$, and let $f_{/R}$ be an $n$-ary quadratic form.  Then:
\begin{itemize}
	\item[a)] We have $\delta_v(f) \in (0,1] \cap \Q$.  
\item[b)] We have $D(f) = R^{\bullet}$ if and only if $\delta_v(f) = 1$.
\end{itemize}
\end{cor}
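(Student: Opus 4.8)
The plan is to read both parts directly off the formula \eqref{LOCALDENEQ} of Theorem~\ref{TABLEMEASURETHM}, which expresses $\delta_v(f)$ as a finite sum over the finite set $\mathcal{S}$, and to extract the equality case from the structural description $D_f \cap sK^{\times 2} = \{ y \in R^{\bullet} \mid y \in sK^{\times 2},\ v(y) \ge v_s\}$ recorded before the theorem.

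For part a), rationality is immediate: each summand $\frac{1}{\nu(R) q^{v_s-1}(q+1)}$ with $v_s < \infty$ is a positive rational, those with $v_s = \infty$ are $0$ by convention, and $\#\mathcal{S} < \infty$, so $\delta_v(f) \in \Q$ and $\delta_v(f) \ge 0$. To upgrade this to strict positivity I must exhibit one $s$ with $v_s < \infty$, i.e. show $D_f \neq \emptyset$. This follows from nondegeneracy: $f$ is not the zero polynomial, so it takes a nonzero value at some $v \in K^n$; since $K$ is infinite and $f(\pi^N v) = \pi^{2N} f(v)$, choosing $N$ large enough to land in $R^n$ produces a nonzero element of $D_f$. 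Hence $\delta_v(f) > 0$.

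For part b), the forward implication is trivial: if $D_f = R^{\bullet}$ then $\delta_v(f) = \mu(R^{\bullet}) = \mu(R) = 1$, the point $\{0\}$ being $\mu$-null. For the converse I will compare $\delta_v(f)$ against the ``maximal'' value coming from $R^{\bullet}$ itself. Decompose $R^{\bullet} = \coprod_{s \in \mathcal{S}} (sK^{\times 2} \cap R^{\bullet})$, choosing each representative $s$ to have valuation $v(s) \in \{0,1\}$ (as in the lists $\{1,r,\pi,r\pi\}$ and $\{1,3,5,7,2,6,10,14\}$); then the minimal valuation attained in $sK^{\times 2} \cap R^{\bullet}$ is $\tilde v_s := v(s)$. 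Running the same geometric-series computation that proves Theorem~\ref{TABLEMEASURETHM}, but for the full class, gives $\mu(sK^{\times 2} \cap R^{\bullet}) = \frac{1}{\nu(R) q^{\tilde v_s - 1}(q+1)}$, and since these sets partition $R^{\bullet}$ we obtain $\sum_s \frac{1}{\nu(R) q^{\tilde v_s - 1}(q+1)} = \mu(R^{\bullet}) = 1$.

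Now the structural description forces $v_s \ge \tilde v_s$ for every $s$, and since $q > 1$ each summand strictly decreases in $v_s$; hence term-by-term $\delta_v(f) \le \sum_s \frac{1}{\nu(R) q^{\tilde v_s - 1}(q+1)} = 1$ (this also reproves $\delta_v(f) \le 1$ from part a), which otherwise is just monotonicity of $\mu$ applied to $D_f \subseteq R$). Equality holds if and only if $v_s = \tilde v_s$ for all $s$, and $v_s = \tilde v_s$ forces $D_f \cap sK^{\times 2}$ to be the entire annulus $sK^{\times 2} \cap R^{\bullet}$; taking the union over $s$ yields $D_f = R^{\bullet}$. I expect the only point needing care is this final equivalence: one must note that whenever $D_f \cap sK^{\times 2} \subsetneq sK^{\times 2} \cap R^{\bullet}$ the omitted elements (those of valuation $\tilde v_s \le v(y) < v_s$) have strictly positive measure, so that $\delta_v(f) = 1$ genuinely forces an equality of sets rather than merely equality of measures -- which is exactly what the strict monotonicity in $v_s$ encodes.
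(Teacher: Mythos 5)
Your argument is correct and is exactly the fleshed-out version of what the paper intends: the paper derives the corollary as an ``immediate consequence'' of the formula \eqref{LOCALDENEQ}, and you read both parts off that formula in the natural way (finiteness of $\mathcal{S}$ for rationality, $D_f \neq \emptyset$ for positivity, and the strict monotonicity of the summand in $v_s$ together with $\sum_s \frac{1}{\nu(R)q^{\tilde v_s -1}(q+1)} = \mu(R^{\bullet}) = 1$ for the equality case in b)). No gaps; this matches the paper's approach.
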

\noindent
If $f_{/R}$ is a quadratic form, it is a routine task to compute its representation table and thus via Theorem \ref{TABLEMEASURETHM} 
its local representation measure $\delta_v(f)$.  However, for our later applications we want to solve a related \emph{inverse problem}: namely, for fixed $n \in \Z^+$ we would like to determine all possible representation tables for $n$-ary quadratic 
forms $f_{/R}$.  (What we actually need is the set of local representation measures, which we approach 
via the representation tables.)  For fixed $R$ and $n$, this is clearly a finite problem.  It is even a finite problem for fixed $R$ and varying $n$: one can show that the set of representation tables for $n$-ary forms $f_{/R}$ is independent of $n$ as long as $n$ is sufficiently large.  \\ \indent
Here we will solve this problem for every \emph{nondyadic} $R$ and show that the set of possible representation tables of $n$-ary 
forms is the same for all $n \geq 4$.  To do so we need only hand calculations.  
\\ \indent
For our applications to integral forms it would be desirable also to treat the case of $R = \Z_2$.  But this finite problem seems several orders of magnitude more difficult than the general nondyadic case.  Instead we will compute all possible local representation densities 
$\delta_2(f)$ for (i) isotropic binary forms $f_{/\Z_2}$ and (ii) ADC forms $f_{/\Z_2}$.

\subsection{Non-dyadics} Throughout this section we suppose that $R$ is non-dyadic, i.e., $\F_q = R/\pi R$ has characteristic different from $2$.  We order the square classes as $1,r,\pi,r\pi$.  If $f_{/R}$ is a quadratic form with representation table
$(\alpha,\beta,\gamma,\delta)$, then $\alpha,\beta$, if finite, are even, while $\gamma,\delta$ (if finite), are odd.
\\ \\
Since $R$ is non-dyadic, every quadratic form $f_{/R}$ is diagonalizable \cite[Thm. 8.1]{Gerstein}, so we may assume
\[ f = a_1 t_1^2 + \ldots + a_n t_n^2 \]
with $v(a_1) \leq \ldots \leq v(a_n)$.  Then $\frac{1}{a_1} f$ is a primitive quadratic form defined over $R$.
\begin{remark}
\label{PRIMITIVEREMARK}
The representation table for $f$ is easily determined from that of $\frac{1}{a_1} f$: indeed, ordering the square classes as $1,r,\pi,r \pi$, if the
representation table for $\frac{1}{a_1} f$ is $(\alpha,\beta,\gamma,\delta)$, then $a_1 = r^{\epsilon} u^2 \pi^k$ with $\epsilon \in \{0,1\}$,
$u \in R^{\times}$ and $k \geq 0$.  Then:
\begin{itemize}\item If $\epsilon = 0$ and $k$ is even, the  table for $f$ is $(\alpha+k,\beta+k,\gamma+k,\delta+k)$.
\item If $\epsilon = 0$ and $k$ is odd, the  table for $f$ is $(\gamma+k,\delta+k,\alpha+k,\beta+k)$.
\item If $\epsilon = 1$ and $k$ is even, the  table for $f$ is $(\beta+k,\alpha+k,\delta + k, \gamma + k)$.
\item If $\epsilon = 1$ and $k$ is odd, the  table for $f$ is $(\delta+k,\gamma+k,\beta+k,\alpha+k)$.
\end{itemize}
\end{remark}
\noindent
Thus we have reduced to the case of
\[ f = t_1^2 + a_2 t_2^2 + \ldots + a_n t_n^2. \]

\subsubsection{Binary forms} The discriminant of $t_1^2 + a_2 t_2^2$ is $a_2$, so as $a_2$ ranges over all $4$ elements of $\mathcal{S}$ we get four different $K$-isomorphism classes of binary forms.
\\ \\
\textbf{Case 2.1}: $f = t_1^2 - \pi^{2b} t_2^2$.  We claim that
the representation table is $(0,2b,2b+1,2b+1)$.  \\ \indent
The form $f$ is isotropic and thus $K$-universal.  Moreover, taking $t_1 \mapsto \pi^b t_1$, we find that $f$ represents $\pi^{2b}(t_1^2- t_2^2)$.  Since $2 \in R^{\times}$,
$t_1^2 -t_2^2$ is isomorphic to the hyperbolic plane $t_1t_2$ and thus is universal.  It follows that $f$ represents every element
with valuation at least $2b$.  \\
$\bullet$ Suppose $x_1^2 - \pi^{2b} x_2^2 = r \pi^{2b-2}$.  Then we must have $x_1 = \pi^{b-1} X_1$, so
\[ X_1^2 - \pi^2 x_2^2 = r, \]
and going mod $\pi$ gives $X_1^2 \equiv r \pmod{\pi}$, a contradiction. \\
$\bullet$ Suppose $x_1^2 - \pi^{2b} z_2^2 = u \pi^{2b-1}$, where $u \in \{1,r\}$.  Then $2v(x_1) = v(x_1^2) \geq 2b-1$ and thus the left hand side has valuation at least $2b$, contradiction.
\\ \\
\textbf{Case 2.2:} $f = t_1^2 - r \pi^{2b} t_2^2$.   We claim that the representation table is $(0,2b,\infty,\infty)$.  \\ \indent
The form $f$ is anisotropic and unimodular and thus $K$-represents
$1$ and $r$ but not $\pi$ and $r\pi$.   Moreover $f$ represents $\pi^{2b}(t_1^2 - r t_2^2)$.  The form $t_1^2 - r t_2^2$ $R$-represents both $1$ and $r$, by Hensel's Lemma.  So $f$ represents $r \pi^{2b}$.  \\
$\bullet$ Suppose $x_1^2 - r \pi^{2b} x_2^2 = r \pi^{2b-2}$.  Then $x_1 = \pi^{b-1} X_1$, so
\[ X_1^2 - r \pi^2 x_2^2 = r, \]
and going mod $\pi$ gives a contradiction.
\\ \\
\textbf{Case 2.3:} $f = t_1^2 + \pi^{2b+1} t_2^2$.  Then $f$ is anisotropic and $K$-represents $1$ and $\pi$ but not $r$ or $r \pi$.
We claim that the representation table is $(0,\infty,2b+1,\infty)$.  Clearly $f$ represents $\pi^{2b+1}$.  \\
$\bullet$ Suppose $x_1^2 + \pi^{2b+1} x_2^2 = \pi^{2b-1}$.  Then as above $\pi^{2b}$ divides the left hand side, contradiction.
\\ \\
\textbf{Case 2.4:} $f = t_1^2 + r \pi^{2b+1} t_2^2$.  Then $f$ is anisotropic and $K$-represents $1$ and $r \pi$ but not $r$ or $\pi$.
The representation table is $(0,\infty,\infty,2b+1)$; the computations are similar to those of Case 2.3.

\subsubsection{Ternary Forms} \textbf{} \\ \\ \noindent
\textbf{Case 3.1:} $f= t_1^2 - \pi^{2b} t_2^2 + c t_3^2$ with $v(c) \geq 2b$.  \\ We claim that the representation table is $(0,2b,2b+1,2b+1)$,
i.e., the same as for its binary subform $t_1^2 - \pi^{2b} t_2^2$, treated in Case 2.1.  All we need to show is that for $d = r \pi^{2b-2}$, $\pi^{2b-1}$, $r \pi^{2b-1}$, the equation
\[ x_1^2 - \pi^{2b} x_2^2 + c x_3^2 = d \]
is not solvable for $(x_1,x_2,x_3) \in R^3$.  The argument of Case 2.1 carries over almost verbatim.
\\ \\
\textbf{Case 3.2.1:} $f= t_1^2 - r \pi^{2b} t_2^2 - \pi^{2c} t_3^2$ with $b \leq c$. \\  We claim that the representation table is $(0,2b,2c+1,2c+1)$.  Since $f$ represents $\pi^{2c}(t_1^2-t_3^2)$, it represents all elements with valuation at least $2c$
and thus $\pi^{2c+1}$ and $r \pi^{2c+1}$.  If for $d \in R$ the equation $x_1^2 - r \pi^{2b} x_2^2 - \pi^{2c} x_3^2 = d$
is solvable, so is $x_1^2 - r \pi^{2b} x_2^2 - \pi^{2b} x_3^2 = d$, so we may assume $c = b$.  \\
$\bullet$ If $x_1^2 - r \pi^{2b} x_2^2 - \pi^{2c} x_3^2 = r \pi^{2b-2}$ is solvable, then so is
$x_1^2 - r \pi^{2b} x_2^2 - \pi^{2b} x_3^2 = r \pi^{2b-2}$.  Then $\pi^{b-1} \mid x$, so
\[X_1^2 - r \pi^2 x_2^2 - \pi^2 x_3^2 = r, \] and going modulo $\pi$ gives a contradiction.  \\
$\bullet$ If $x_1^2 - r \pi^{2b} x_2^2 - \pi^{2c} x_3^2 = u \pi^{2c-1}$ is solvable, then so is
\[ x_1^2 - r x_2^2 - \pi^{2c} x_3^2 = u \pi^{2c-1}. \]
Since $x_1^2 - r x_2^2$ is anisotropic modulo $\pi$, we get $x_1 = \pi X_1$, $y_1 = \pi Y_1$ and
\[ \pi X_1^2 - r \pi X_2^2 - \pi^{2c-1} x_3^2 = u \pi^{2c-2}. \]
If $c = 1$, the left hand side is divisible by $\pi$ and the right hand side is not, contradiction.  Otherwise:
\[ X_1^2 - r X_2^2 - \pi^{2c-2} x_3^2 = u \pi^{2c-3}, \]
and by induction on $c$ we get a contradiction.
\\ \\
\textbf{Case 3.2.2:} $f=  t_1^2 - r \pi^{2b} t_2^2 - r \pi^{2c} t_3^2$ with $b \leq c$. \\  The representation table is
$(0,2b,2c+1,2c+1)$; the computations are similar to those of Case 3.2.1.
\\ \\
\textbf{Case 3.2.3:} $f = t_1^2 - r \pi^{2b}t_2^2 + \pi^{2c+1} t_3^2$ with $b \leq c$. \\ We claim that the representation table is
$(0,2b,2c+1,\infty)$.  Since $f$ is anisotropic, it does not represent $-\disc f \pmod{K^{\times 2}} = r \pi \pmod{K^{\times 2}}$.
As above it represents $r \pi^{2b}$ and clearly it represents $\pi^{2c+1}$.  \\
$\bullet$ If $x^2 - r \pi^{2b} y^2 + \pi^{2c+1} z^2 = r \pi^{2b-2}$ is solvable, then so is
\[ x^2 - r \pi^{2b} y^2 + \pi z^2 = r \pi^{2b-2}. \]
If $b = 1$ then going modulo $\pi$ gives a contradiction.  Otherwise we may take $x = \pi X$, getting
\[ \pi X^2 - r \pi^{2b-1} y^2 + z^2 = r \pi^{2b-3} \]
and then $z = \pi Z$, getting
\[ X^2 - r \pi^{2b-2} y^2 + \pi Z^2 = r \pi^{2b-4}, \]
and by induction on $b$ we get a contradiction. \\
$\bullet$ If $x^2 - r \pi^{2b}y^2 + \pi^{2c+1} z^2 = \pi^{2c-1}$ is solvable, then so is
\[ x^2 - r y^2 + \pi^{2c+1} z^2 = \pi^{2c-1}. \]
Going modulo $\pi$ shows we can take $x = \pi X$, $y = \pi Y$, getting
\[ \pi X^2 - r \pi Y^2 + \pi^{2c} z^2 = \pi^{2c-2}. \]
If $c= 1$ then going modulo $\pi$ gives a contradiction; else we get
\[ X^2 - r Y^2 + \pi^{2c-1} z^2 = \pi^{2c-3}, \]
and by induction on $c$ we get a contradiction.
\\ \\
\textbf{Case 3.2.4:} $f = t_1^2 - r \pi^{2b} t_2^2 + r \pi^{2c+1} t_3^2$ with $b \leq c$. \\
The representation table is $(0,2b,\infty,2c+1)$; the computations are similar to those of Case 3.2.3.
\\ \\
\textbf{Case 3.3.1:} $f = t_1^2 + \pi^{2b+1} t_2^2 - \pi^{2c} t_3^2$ with $b< c$.  \\
We claim that the representation table is $(0,2c,2b+1,2c+1)$.  By Case 2.1, $f$ represents $r \pi^{2c}$; clearly it represents
$\pi^{2b+1}$.  Also $f$ represents $\pi^{2c}(t_1^2-t_3^2)$ so represents $r \pi^{2c+1}$.  \\
$\bullet$ If $f = x_1^2 + \pi^{2b+1} y^2 - \pi^{2c} z^2 = r \pi^{2c-2}$ is solvable, then so is
\[ x^2 + \pi y^2 - \pi^{2c} z^2 = r \pi^{2c-2}. \]
If $c = 1$, then going modulo $\pi$ yields a contradiction.  Else we may take $x = \pi X$, getting
\[ \pi X^2 + y^2 - \pi^{2c-1} z^2 = r \pi^{2c-3} \]
and then $y = \pi Y$, getting
\[ X^2 + \pi Y^2 - \pi^{2c-2} z^2 = r \pi^{2c-4}, \]
and by induction on $c$ we get a contradiction.  \\
$\bullet$ If $f = x^2 + \pi^{2b+1}y^2 - \pi^{2c} z^2 = \pi^{2b-1}$ is solvable, then we may take $x = \pi^b X$, getting
\[  \pi X^2 + \pi^2 y^2 - \pi^{2c-2b+1} z^2 = 1, \]
and going modulo $\pi$ gives a contradiction. \\
$\bullet$ If $f = x_1^2 + \pi^{2b+1} y^2 - \pi^{2c} z^2 = r \pi^{2c-1}$ is solvable, then so is
\[ f = x^2 + \pi y^2 - \pi^{2c} z^2 = r \pi^{2c-1}. \]
We may take $x = \pi X$, getting
\[ \pi X^2 + y^2 - \pi^{2c-1} z^2 = r \pi^{2c-2}. \]
If $c = 1$, then going modulo $\pi$ gives a contradiction.  Else we can take $y = \pi Y$, getting
\[ X^2 + \pi y^2 - \pi^{2c-2} z^2 = r \pi^{2c-3}, \]
and by induction on $c$ we get a contradiction.
\\ \\
\textbf{Case 3.3.2:} $f = t_1^2 + \pi^{2b+1} t_2^2 - \pi^{2c+1} t_3^2$ with $b \leq c$.  \\
The representation table is $(0,2c+2,2b+1,2c+1)$;  the computations are similar to those of Case 3.3.1.
\\ \\
\textbf{Case 3.3.3:} $f = t_1^2 + \pi^{2b+1} t_2^2 - r \pi^{2c} t_3^2$ with $b < c$. \\
We claim that the representation table is $(0,2c,2b+1,\infty)$.  \\ \indent
Since $f$ is anisotropic, it does not represent
\[-\disc f \pmod{K^{\times 2}} = r\pi \pmod{K^{\times 2}}. \]  Clearly $f$ represents $\pi^{2b+1}$; it also represents
$\pi^{2c}(t_1^2 - r t_3^2)$ hence $\pi^{2c} r$.  \\
$\bullet$ If $f = x^2 + \pi^{2b+1} y^2 - r \pi^{2c} z^2 = r \pi^{2c-2}$ is solvable, then so is
\[ x^2 + \pi y^2 - r \pi^{2c} z^2 = r \pi^{2c-2}. \]
If $c = 1$, then going modulo $\pi$ gives a contradiction.  Else we may take $x = \pi X$, getting
\[ \pi X^2 + y^2 - r \pi^{2c-1} z^2 = r \pi^{2c-3}, \]
and then $y = \pi Y$, getting
\[ X^2 + \pi Y^2 - r \pi^{2c-2} z^2 - r \pi^{2c-4}, \]
and by induction on $c$ we get a contradiction. \\
$\bullet$ If $f = x^2 + \pi^{2b+1} y^2 - r \pi^{2c} z^2 = \pi^{2b-1}$ is solvable, then so is
\[ x^2 + \pi^{2b+1} y^2 - r \pi^{2b} z^2 = \pi^{2b-1}. \]
We may take $x = \pi X$, getting
\[ \pi X^2 + \pi^{2b} y^2 - r \pi^{2b-1} z^2 = \pi^{2b-2}. \]
If $b = 1$, then going modulo $\pi$ gives a contradiction.  Else we get
\[ X^2 + \pi^{2b-1} y^2 - r \pi^{2b-2} z^2 = \pi^{2b-3}, \]
and by induction on $b$ we get a contradiction.
\\ \\
\textbf{Case 3.3.4:} $f = t_1^2 + \pi^{2b+1} t_2^2 - r \pi^{2c+1} t_3^2$ with $b \leq c$. \\
The representation table is $(0,\infty,2b+1,2c+1)$; the computations are similar to those of Case 3.3.3.
\\ \\
\textbf{Case 3.4.1:} $f = t_1^2 + r \pi^{2b+1} t_2^2 - \pi^{2c} t_3^2$ with $b < c$.  \\
The representation table is $(0,2c,2c+1,2b+1)$; the computations are similar to those of Case 3.3.1.
\\ \\
\textbf{Case 3.4.2:} $f= t_1^2 + r \pi^{2b+1} t_2^2 - r \pi^{2c+1} z^2$ with $b \leq c$. \\
The representation table is $(0,2c+2,2c+1,2b+1)$; the computations are similar to those of Case 3.3.1.
\\ \\
\textbf{Case 3.4.3:} $f = t_1^2 + r \pi^{2b+1} t_2^2 - r \pi^{2c} t_3^2$ with $b < c$.  \\
The representation table is $(0,2c,\infty,2b+1)$;  the computations are similar to those of Case 3.3.3.
\\ \\
\textbf{Case 3.4.4:} $f= t_1^2 + r \pi^{2b+1} t_2^2 - \pi^{2c+1} t_3^2$ with $b \leq c$.  \\
The representation table is $(0,\infty,2c+1,2b+1)$;  the computations are similar to those of Case 3.3.3.   
\\ \\
From this we  find that for a ternary form $x^2 + by^2 + cz^2$ the possible representation tables are precisely the
following: for any $b,c \in \N$,
\begin{multline*} (0,\infty,2b+1,2c+1),  (0,2b,\infty,2c+1),  (0,2b,2c+1,\infty),  \\ 
(0,2b,2b+1,2b+1),  (0,2b,2c+1,2c+1),  (0,2b,2c+1,2b+1), \end{multline*}
and also:
\[ \forall b,c \in \N \text{ with } b \leq c, \ (0,2b,2c+1,2c+1), \ (0,2c+2,2c+1,2b+1) \]
\[ \forall b,c \in \N \text{ with } c < b, \ (0,2b,2c+1,2b+1), \ (0,2b,2b+1,2c+1) \]
\[ \forall b,c \in \N \text{ with } c \leq b, \ (0,2b+2,2c+1,2b+1), \ (0,2b+2,2b+1,2c+1). \]

\subsubsection{Quaternary forms} Let $f_{/R}$ be a quaternary form.  Then $f$ is $K$-universal, so $f$ $R$-represents elements of every $K$-adic square class, and thus the representation table is of
the form $(2a,2b,2c+1,2d+1)$ for $a,b,c,d \in \N$.  We claim that all of these representation tables actually occur.  As in Remark
\ref{PRIMITIVEREMARK}, via scaling it is enough to show that all representation tables $(0,2b,2c+1,2d+1)$ occur.  This is accomplished by the following calculations.
\\ \\
\textbf{Case 4.1:} $f: t_1^2 - r \pi^{2b} t_2^2 + r \pi^{2d+1} t_3^2 - \pi^{2c+1} t_4^2$ with $d \leq c$. \\
We claim that the representation table is $(0,2b,2c+1,2d+1)$.  \\ \indent By Case 2.2, $f$ represents $r \pi^{2b}$.  Clearly $f$ represents
$r \pi^{2d+1}$.  Moreover $f$ represents $\pi^{2c+1}(rt_3^2 - t_4^2)$ hence it represents $\pi^{2c+1}$.  \\
$\bullet$ If $f = x^2 - r \pi^{2b} y^2 + r \pi^{2d+1} z^2 - \pi^{2c+1} w^2 = r \pi^{2b-2}$ is solvable,
then so is
\[ x^2 - r \pi^{2b} y^2 + r \pi z^2 - \pi w^2 = r \pi^{2b-2}. \]
If $b = 1$, then going modulo $\pi$ gives a contradiction.  Else we may take $x = \pi X$, getting
\[ \pi X^2 - r \pi^{2b-1} y^2 + r z^2 - w^2 = r \pi^{2b-3}. \]
Going modulo $\pi$ shows that we may take $z = \pi Z$ and $w = \pi W$, getting
\[ X^2 - r \pi^{2b-2} y^2 + r \pi Z^2 - \pi W^2 = r \pi^{2b-4}, \]
and by induction on $b$ we get a contradiction.  \\
$\bullet$ If $f = x^2 - r \pi^{2b} y^2 + r \pi^{2d+1} z^2 - \pi^{2c+1} w^2 = \pi^{2c-1}$ is solvable,
then so is
\[ x^2 - r y^2 + r \pi z^2 - \pi^{2c+1} w^2 = \pi^{2c-1}. \]
Going modulo $\pi$ shows that we may take $x = \pi X$ and $y = \pi Y$, getting
\[ \pi X^2 - r \pi Y^2 + r z^2 - \pi^{2c} w^2 = \pi^{2c-2}. \]
If $c = 1$, then going modulo $\pi$ gives a contradiction.  Else we may take $z = \pi Z$, getting
\[ X^2 - r Y^2 + r \pi Z^2 - \pi^{2c-1} w^2 = \pi^{2c-3}, \]
and by induction on $c$ we get a contradiction. \\
$\bullet$ If $f = x^2 - r \pi^{2b} y^2 + r \pi^{2d+1} z^2 - \pi^{2c+1} w^2 = r \pi^{2d-1}$ is solvable,
then so is
\[x^2 - r y^2 + r \pi^{2d+1} z^2 - \pi^{2d+1} w^2 = r \pi^{2d-1}. \]
Going modulo $\pi$ shows that we may take $x = \pi X$ and $y = \pi Y$, getting
\[ \pi X^2 - r \pi Y^2 + r \pi^{2d} z^2 - \pi^{2d} w^2 = r \pi^{2d-2}. \]
If $d = 1$, then going modulo $\pi$ gives a contradiction.  Else we get
\[ X^2 - r Y^2 + r \pi^{2d-1} z^2 - \pi^{2d-1} w^2 = r \pi^{2d-3}. \]
Going modulo $\pi$ shows that we may take $X = \pi \mathcal{X}$ and $Y = \pi \mathcal{Y}$, getting
\[ \pi \mathcal{X}^2 - r \pi \mathcal{Y}^2 + r \pi^{2d-2} z^2 - \pi^{2d-2} w^2 = r \pi^{2d-4}, \]
and by induction on $d$ we get a contradiction.
\\ \\
\textbf{Case 4.2:} $f: t_1^2 - r \pi^{2b} t_2^2 + \pi^{2c+1} t_3^2 - r \pi^{2d+1} t_4^2$ with $c \leq d$.  \\
The representation table is $(0,2b,2c+1,2d+1)$; the computations are similar to those of Case 4.1.

\subsubsection{$n \geq 5$} Suppose that $f_{/R}$ is an $n$-ary quadratic form with $n \geq 5$.  
\\ \\
\textbf{Case 5.1:} Let $b,c,d \in \N$ with $d \leq c$, and put $A \coloneqq \max(2b-1,2c,2d)$.  Let $f: t_1^2 - r \pi^{2b} t_2^2 + r \pi^{2d+1} t_3^2 - \pi^{2c+1} t_4^2 + \pi^A (t_5^2 + \ldots + t_n^2)$.  Revisiting the argument
of Case 4.1, we see that the representation table remains $(0,2b,2c+1,2d+1)$.
\\ \\
\textbf{Case 5.2:} Let $b,c,d \in \N$ with $c \leq d$, and put $A \coloneqq \max(2b-1,2c,2d)$.  Let $f: t_1^2 - r \pi^{2b} t_2^2 + \pi^{2c+1} t_3^2 - r \pi^{2d+1} t_4^2 + \pi^A (t_5^2 + \ldots + t_n^2)$.  Revisiting the argument
of Case 4.1, we see that the representation table remains $(0,2b,2c+1,2d+1)$.

\subsubsection{A consequence}

\begin{thm}
Let $R$ be a complete DVR with residue field of finite odd cardinality $q$.  
Let $f_{/R}$ be an $n$-ary anisotropic ADC form.  
\begin{enumerate}
	\item[a)] If $n = 2$, then $\delta_v(f) \in \{\frac{q}{q+1}, \frac{1}{2}, \frac{1}{q+1}\}$, and all of these occur.
	\item[b)] If $n = 3$, then $\delta_v(f) \in \{\frac{q+2}{2q+2}, \frac{2q+1}{2q+2}\}$, and both of these occur. 
	\item[c)] If $n \geq 4$, then $\delta_v(f) = 1$.
\end{enumerate}
\end{thm}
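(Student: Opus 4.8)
The plan is to reduce everything to the density formula of Theorem \ref{TABLEMEASURETHM}, after first understanding exactly how the ADC hypothesis constrains the representation table. Since $R$ is nondyadic we have $\nu(R) = 2$ and $\mathcal{S} = \{1,r,\pi,r\pi\}$, so \eqref{LOCALDENEQ} becomes $\delta_v(f) = \frac{1}{2(q+1)}\sum_{s \in \mathcal{S}} q^{1-v_s}$, with the convention that a term vanishes when $v_s = \infty$. The crux of the argument is a short lemma on ADC forms: for each $s \in \mathcal{S}$, if $s$ is $K$-represented then $v_s$ equals the minimal valuation $v(s) \in \{0,1\}$ of an element of $sK^{\times 2}\cap R^{\bullet}$ (so $v_s = 0$ for $s \in \{1,r\}$ and $v_s = 1$ for $s \in \{\pi, r\pi\}$), whereas if $s$ is not $K$-represented then $v_s = \infty$. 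I would prove this by noting that $K$-representation is invariant under scaling by squares, so if $s$ is $K$-represented then the representative $s \in R$ is $K$-represented, hence $R$-represented by the ADC hypothesis, giving $v_s \le v(s)$; the reverse inequality is automatic since $D_f \subseteq R^{\bullet}$ and every element of $sK^{\times 2}\cap R^{\bullet}$ has valuation at least $v(s)$.

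Granting this, I would write $a$ for the number of $K$-represented classes among $\{1,r\}$ and $b$ for the number among $\{\pi,r\pi\}$. A represented unit class contributes $q$ to the sum and a represented non-unit class contributes $1$, so $\delta_v(f) = \frac{aq+b}{2(q+1)}$, and the whole problem collapses to determining the possible pairs $(a,b)$ in each rank. These I read off from the Remark (preceding Theorem \ref{TABLEMEASURETHM}) recording which square classes an anisotropic form $K$-represents: an anisotropic binary form represents exactly two of the four classes, an anisotropic ternary form represents every class except $-\disc(f)$, and a quaternary form is $K$-universal while no anisotropic form exists once $n \ge 5$.

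For $n=2$ this forces $a+b=2$, so $(a,b) \in \{(2,0),(1,1),(0,2)\}$ and $\delta_v(f) \in \{\tfrac{q}{q+1},\tfrac12,\tfrac{1}{q+1}\}$; all three occur, realized by $t_1^2 - r t_2^2$, by $t_1^2 + \pi t_2^2$, and by $\pi t_1^2 - r\pi t_2^2$, each anisotropic with minimal table entries and hence ADC. For $n=3$ we get $a+b=3$, so $(a,b) \in \{(2,1),(1,2)\}$ according to whether $-\disc(f)$ is a non-unit or a unit class, yielding $\delta_v(f) \in \{\tfrac{2q+1}{2q+2},\tfrac{q+2}{2q+2}\}$; both occur, e.g. via $t_1^2 - r t_2^2 + \pi t_3^2$ (missing class $r\pi$, so $(a,b)=(2,1)$) and $t_1^2 + \pi t_2^2 - r\pi t_3^2$ (missing class $r$, so $(a,b)=(1,2)$), which are the anisotropic forms of Cases 2.1--3.4. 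Finally, for $n \ge 4$ the only anisotropic case is $n=4$, where $K$-universality gives $(a,b)=(2,2)$ and $\delta_v(f) = \frac{2q+2}{2(q+1)} = 1$; such forms exist since the anisotropic quaternary form is maximal, hence ADC by Theorem \ref{UNTHM1}a), while $n \ge 5$ is vacuous.

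I expect the only delicate point to be the ADC lemma of the first paragraph and the accompanying bookkeeping: one must verify that the ADC hypothesis forces the table entry to be \emph{minimal} on each $K$-represented class, not merely finite, and for the occurrence statements one must confirm that the exhibited forms are genuinely anisotropic (by the valuation-parity arguments already used in Cases 2.1--2.4) and ADC (by minimality of their entries). Once these checks are recorded, all three parts are immediate arithmetic from $\delta_v(f) = \frac{aq+b}{2(q+1)}$.
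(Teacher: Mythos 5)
Your proposal is correct and follows essentially the same route as the paper: the ADC hypothesis forces each table entry $v_s$ to be either minimal ($0$ for unit classes, $1$ for non-unit classes) or $\infty$, which reduces everything to $\delta_v(f) = \frac{aq+b}{2q+2}$ and a case analysis on which square classes are $K$-represented. The only difference is cosmetic: you realize the occurrence claims by exhibiting explicit forms from the earlier case analysis, whereas the paper uses the count of $\binom{4}{2}$ anisotropic binary forms for part a) and scaling for part b).
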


\begin{proof}
We have $v_1,v_r \in \{0,\infty\}$ and $v_{\pi}, v_{r \pi} \in \{1,\infty\}$.  So if $f$ represents $a$ out
of the square classes $\{1,r\}$ and $b$ out of the square classes $\{\pi,r\pi\}$ then by (\ref{LOCALDENEQ}) we have
\[ \delta_v(f) = \frac{aq + b}{2q+2}. \]
a) If $n = 2$, then we know that $a + b = 2$, so $(a,b) \in \{ (2,0), (1,1), (0,2)\}$, which leads, respectively,
to $\delta_v(f) = \frac{q}{q+1}$, $\delta_v(f) = \frac{1}{2}$ and $\delta_v(f) = \frac{1}{q+1}$.  Moreover,
there are $6 = {4 \choose 2}$ inequivalent anisotropic binary forms over $K$ and 
every pair of square classes is represented by exactly one of these forms.  So all of these densities occur. 

\vskip 0.1in

\noindent b) If $n = 3$, then $f$ represents all square classes except $-\disc f$.  Thus we have $(a,b) \in \{ (2,1), (1,2)\}$, which leads, respectively, to $\delta_v(f) \in \{ \frac{2q+1}{2q+2}, \frac{q+2}{2q+2} \}$.  Moreover, $-\disc f$
can be any square class, just by scaling any one anisotropic ternary form.  So all of these densities occur. 

\vskip 0.1in

\noindent c) If $n \geq 4$ then $f$ is $K$-universal and ADC hence $R$-universal.  So $\delta_v(f) = 1$.
\end{proof}

\subsection{ADC forms over $\Z_2$}
Let $f_{/\Z_2}$ be an $n$-ary ADC form.  The ADC condition implies that for all $s \in \{1,3,5,7\}$ we have $v_s \in \{0,\infty\}$, while
for all $s \in \{2,6,10,14\}$ we have $v_s \in \{1,\infty\}$.  Thus if $f$ represents $a$ of the square classes $\{1,3,5,7\}$ and $b$ of the square classes $\{2,6,10,14\}$, by (\ref{LOCALDENEQ}) we have
\[ \delta_v(f) = \frac{2a+b}{12}. \]
If $n \geq 4$ then $f$ is $\Q_2$-universal and ADC, so it is $\Z_2$-universal and $\delta_2(f) = 1$.  Moreover
if $f$ is isotropic and ADC, then it is $\Z_2$-universal and $\delta_2(f) = 1$.  So the nontrivial cases (among ADC forms) are
when $n \in \{2,3\}$ and $f$ is anisotropic.
\\ \\
Recall also that in the complete local case maximal lattices are ADC, so every $\Q_2$-isomorphism class of quadratic forms yields
at least one $\Z_2$-isomorphism class of ADC forms.

\subsubsection{Binary forms} Let $f_{/\Z_2}$ be an anisotropic ADC binary form.  By \cite{Lam}, $f_{/\Q_2}$ represents precisely $4$
of the $8$ elements of $\mathcal{S}$.  Thus $a+b = 4$.    One sees -- e.g. by a brute force search of the $36$ binary forms $ax^2 + by^2$ obtained by letting
$a$ and $b$ run through unordered pairs of square classes in $\Q_2$ -- that the pairs $(1,3)$ and $(3,1)$ do not occur.  As for the others:
\\ \\
$2x^2 + 2xy + 2y^2$ is ADC and $\Q_2$-represents $2,6,10,14$, so $(a,b) = (0,4)$, and $\delta_2(f) = \frac{1}{3}$. \\
$x^2+y^2$ $\Q_2$-represents $1,2,5,10$, hence $(a,b) = (2,2)$, and $\delta_2(f) = \frac{1}{2}$. \\
$x^2 + xy + y^2$ is ADC and $\Q_2$-represents $1,3,5,7$, hence $(a,b) = (4,0)$, and $\delta_2(f) = \frac{2}{3}$.

\subsubsection{Ternary forms} Let $f_{/\Z_2}$ be an anisotropic ADC ternary form.  Then $f$
fails to represent the $\Q_2$-square class of $-\disc f$ and represents all other square classes.  By scaling we see that $-\disc f$ can be any square class.  So the possibilities are
\[ \delta_2(f) = \frac{5}{6}, \ \delta_2(f) = \frac{11}{12}. \]

\subsection{Isotropic binary forms over $\Z_2$} Let
\[ f(t_1,t_2) = A t_1^2 + B t_1t_2 + C t_2^2 \]
be a primitive isotropic binary form over $\Z_2$ with discriminant $\Delta = B^2-4AC$.  Then $\Delta$ is a square
in $\Z_2$, so $a \coloneqq \frac{1}{2}v_2(\Delta) \in \N$.  We will compute $\delta_2(f)$ in terms of $a$.  Since every binary
form over $\Z_2$ is either diagonalizable or $\Z_2$-equivalent to $2^a(t_1^2 + t_1 t_2 + t_2^2)$ or
$2^a t_1 t_2$ for some $a \in \N$ \cite[Lemma 8.4.1]{Cassels}, and since $f$ is primitive with square discriminant, $f$ is isomorphic to either
$t_1 t_2$ or $u (t_1^2 - 2^{2a-2} t_2^2)$ for $u \in \Z_2^{\times}$.  The former case occurs if and only if $a = 0$, and
in this case clearly $\delta_2(f) = 1$.    In the latter case we have $\delta_2(f) = \delta_2(t_1^2 - 2^{2a-2} t_2^2)$.
If $a = 1$ then $\delta_2(f) = \delta_2(t_1^2-t_2^2)$.  It is easy to see that $f$ represents precisely the elements
of $\Z_2$ with valuation different from $1$, so $\delta_2(f) = \frac{3}{4}$. Now we suppose that $a \geq 2$.
Ordering the $\Q_2$ square classes as $1,3,5,7,2,6,10,14$, we claim that the representation table of $f$ is
\[ (0,2a-2,2a-4,2a-2,2a+1,2a+1,2a+1,2a+1). \]
First suppose that $a = 2$.  Then one easily sees that $t_1^2-4t_2^2$ has representation table
$(0,2,0,2,5,5,5,5)$.  Now suppose $a \geq 3$.  Clearly $t_1^2-2^{2a-2} t_2^2$ represents $1$; reducing modulo $8$ shows that it does not represent any of $2,3,5,6,7,10,14$.  Now, for $w \in \Z_p$, suppose that there are
$x,y \in \Z_2$ such that $x^2 - 2^{2a-2} y^2 = 4w$.  Then we may take $x = 2X$ and get
\[ X^2- 2^{2a-4} y^2 = w, \]
and conversely if $t_1^2 - 2^{2a-4} t_2^2$ represents $w$ then $t_1^2 - 2^{2a-2} t_2^2$ represents $4w$.
So if the representation table for $t_1^2-2^{2a-4} t_2^2$ is \[(0,2a-4,2a-6,2a-4,2a-1,2a-1,
2a-1,2a-1)\] then the representation table for $t_1^2 - 2^{2a-2} t_2^2$ is \[ (0,2a-2,2a-4,2a-2,2a+1,2a+1,2a+1,2a+1), \]
 and the claim follows by induction on $a$.
\\ \\
Combining the above analysis with (\ref{LOCALDENEQ}), we get the following result.

\begin{prop}
\label{PROP2.5}
Let $f_{/\Z_2}$ be a primitive isotropic binary quadratic form of Discriminant $\Delta$, and let $a = \frac{1}{2}v_2(\Delta)$.  Then: \\
a) If $a = 0$, then $\delta_2(f) = 1$.  \\
b) If $a = 1$, then $\delta_2(f) = \frac{3}{4}$. \\
c) If $a \geq 2$, then $\delta_2(f) = \frac{2+ 2^{4-2a} + 2^{5-2a} + 2^{2-2a}}{12}$.
\end{prop}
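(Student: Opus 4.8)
The plan is to reduce the computation of $\delta_2(f)$ to a single representation-table calculation and then to substitute into (\ref{LOCALDENEQ}). First I would invoke the normal form recorded above: since $f$ is primitive and isotropic with square discriminant, it is $\Z_2$-equivalent to $t_1 t_2$ (exactly when $a = 0$) or to $u(t_1^2 - 2^{2a-2} t_2^2)$ for some $u \in \Z_2^{\times}$ (when $a \geq 1$). Because multiplication by a unit $u$ permutes the square classes in $\mathcal{S}$ while preserving valuations, it carries the representation table of a form $g$ to a permutation of the table of $ug$; as the summand in (\ref{LOCALDENEQ}) depends only on $v_s$ and not on $s$, the quantity $\delta_2$ is a unit-scaling invariant. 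Hence it suffices to treat $u = 1$, and everything reduces to the representation table of $t_1^2 - 2^{2a-2} t_2^2$.

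The cases $a = 0$ and $a = 1$ I would dispose of by hand. For $a = 0$ the form is the hyperbolic plane $t_1 t_2$, which is $\Z_2$-universal, so $v_s = 0$ for the four unit classes and $v_s = 1$ for the four classes of valuation $1$, whence $\delta_2 = 1$. For $a = 1$, factoring $t_1^2 - t_2^2 = (t_1+t_2)(t_1-t_2)$ and using that $t_1+t_2$ and $t_1-t_2$ necessarily have the same parity shows that $D_f$ is exactly the set of elements of $\Z_2$ of valuation $\neq 1$; this gives the table $(0,0,0,0,3,3,3,3)$ and hence $\delta_2 = \tfrac{3}{4}$.

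The substance is the case $a \geq 2$, where I would establish by induction on $a$ that the table, in the order $1,3,5,7,2,6,10,14$, is
\[ (0,\,2a-2,\,2a-4,\,2a-2,\,2a+1,\,2a+1,\,2a+1,\,2a+1). \]
The inductive mechanism is the scaling bijection: reducing $x^2 - 2^{2a-2} y^2 = 4w$ modulo $4$ forces $2 \mid x$, and writing $x = 2X$ yields $X^2 - 2^{2a-4} y^2 = w$, with the converse clear; since multiplication by $4$ fixes square classes and raises valuations by $2$, the multiples of $4$ in $D_f$ contribute precisely the entries of the table for $t_1^2 - 2^{2a-4} t_2^2$ shifted up by $2$. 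What this does not see are the values of valuation $0$ or $1$, which I would control by reducing modulo $8$: for $a \geq 3$ one has $t_1^2 - 2^{2a-2} t_2^2 \equiv t_1^2 \pmod 8$, with residues in $\{0,1,4\}$, so the only non-multiple-of-$4$ value is the representation of $1$ in the unit class, forcing $v_1 = 0$ and leaving the three shifted even entries as written. The base case $a = 2$, namely $t_1^2 - 4 t_2^2$, is genuinely different and must be checked directly: here $2a-2 = 2$ is small enough that $t_1^2 - 4 t_2^2$ also attains the residue $5 \pmod 8$, which is exactly what makes $v_5 = 0 = 2a-4$; a short finite computation modulo $8$ (excluding the remaining classes) together with explicit representations confirms the table $(0,2,0,2,5,5,5,5)$.

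With the table in hand the conclusion is pure arithmetic: substituting $q = 2$, $\nu(\Z_2) = 4$, and the listed $v_s$ into (\ref{LOCALDENEQ}) and collecting terms gives $\delta_2(f) = \tfrac16\bigl(1 + 2^{3-2a} + 2^{4-2a} + 2^{1-2a}\bigr) = \tfrac{2 + 2^{4-2a} + 2^{5-2a} + 2^{2-2a}}{12}$. I expect the main obstacle to be the bookkeeping in the inductive step: keeping the valuation-$0$/$1$ analysis (via reduction mod $8$) cleanly separated from the multiple-of-$4$ analysis (via the scaling bijection), and recognizing that the base case $a = 2$ is exceptional precisely because the mod-$8$ reduction has not yet stabilized to $t_1^2$.
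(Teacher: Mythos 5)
Your proposal is correct and follows essentially the same route as the paper: the same Cassels normal form $t_1t_2$ or $u(t_1^2-2^{2a-2}t_2^2)$, the same reduction to $u=1$ via unit-scaling invariance of $\delta_2$, the same direct treatment of $a=0,1$, and the same induction on $a\geq 2$ with base case $(0,2,0,2,5,5,5,5)$, using reduction modulo $8$ for valuations $0$ and $1$ and the scaling bijection $x=2X$ for multiples of $4$, followed by substitution into (\ref{LOCALDENEQ}). Your articulation of why the base case $a=2$ is exceptional and why unit scaling preserves $\delta_2$ is in fact slightly more explicit than the paper's.
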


\subsection{A consequence}
The following is an immediate consequence of our results.

\begin{thm}
\label{LOCALADCTHM}
Let $f_{/\Z_p}$ be an $n$-ary ADC form (e.g. a maximal lattice).  Suppose moreover that either $n = 3$ or $p > 2$.  Then either $\delta_p(f) = 1$ or $\delta_p(f)$ is a rational number
with negative $2$-adic valuation.
\end{thm}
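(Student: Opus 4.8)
The plan is to treat this as a case analysis that merely assembles the explicit density computations of the preceding subsections, verifying in each non-universal case that the denominator stays even. First I would dispose of the isotropic case uniformly: if $f_{/\Z_p}$ is isotropic then $f_{/\Q_p}$ is $\Q_p$-universal, and since $f$ is ADC every $\Q_p$-represented element is $\Z_p$-represented, so $f$ is $\Z_p$-universal and $\delta_p(f) = 1$. Thus I may assume $f$ anisotropic, which forces $n \leq 4$ (an $n$-ary form with $n \geq 5$ is isotropic over any local field). Combined with the hypothesis that $n = 3$ or $p > 2$, the surviving cases are (i) $p > 2$ with $n \in \{1,2,3,4\}$, and (ii) $p = 2$ with $n = 3$.

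For the nondyadic case (i) I would handle all $n$ at once. Here $q = p$ is odd and $\nu(\Z_p) = 2$. The ADC hypothesis forces $v_s \in \{0,\infty\}$ for the unit square classes $s \in \{1,r\}$ and $v_s \in \{1,\infty\}$ for $s \in \{\pi,r\pi\}$, by the argument used in the proof of the ``consequence'' theorem: an element of valuation $0$ or $1$ that is $\Q_p$-represented must, by ADC, already be $\Z_p$-represented. Writing $a$ for the number of represented classes among $\{1,r\}$ and $b$ for the number among $\{\pi,r\pi\}$, formula (\ref{LOCALDENEQ}) collapses to
\[ \delta_p(f) = \frac{aq+b}{2(q+1)}, \qquad a,b \in \{0,1,2\}. \]
Since $a,b \leq 2$ one checks $\delta_p(f) = 1$ exactly when $(a,b) = (2,2)$, the $\Z_p$-universal case. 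In every other case I would show $v_2(\delta_p(f)) < 0$ by a parity count: as $q$ is odd, $aq + b \equiv a+b \pmod 2$, so $v_2(aq+b) \leq v_2(q+1)$ for each admissible $(a,b) \neq (2,2)$ (if $a+b$ is odd the numerator is a unit; the even pairs $(0,2),(2,0),(1,1)$ give numerators $2,\,2q,\,q+1$ of $2$-adic valuation $1,\,1,\,v_2(q+1)$). As the denominator $2(q+1)$ has $2$-adic valuation $1 + v_2(q+1)$, this yields $v_2(\delta_p(f)) \leq -1$.

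For the dyadic ternary case (ii) I would quote the ternary $\Z_2$ computation directly: an anisotropic ADC ternary form $f_{/\Z_2}$ fails to represent exactly the $\Q_2$-square class of $-\disc f$ and represents the other seven, giving $\delta_2(f) \in \{\tfrac{5}{6}, \tfrac{11}{12}\}$, both of negative $2$-adic valuation, whereas an isotropic ADC ternary form is $\Z_2$-universal with $\delta_2(f) = 1$.

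The argument carries no real obstacle — the statement is advertised as an immediate consequence — so the only thing to guard against is completeness of the case split. Two points deserve care. First, the unary case $n = 1$ is not covered by the ``consequence'' theorem (which assumes $n \geq 2$), but it is subsumed by the uniform nondyadic formula above, appearing as the pairs $(a,b) \in \{(1,0),(0,1)\}$ with densities $\tfrac{q}{2(q+1)}$ and $\tfrac{1}{2(q+1)}$; likewise $n = 4$ appears only as $(a,b)=(2,2)$, giving density $1$. Second, the excluded combination ``$p = 2$ and $n \neq 3$'' is precisely what lets us avoid the dyadic binary and dyadic $n \geq 4$ tables, so one must confirm these are never invoked. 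The genuinely computational content is the elementary parity bookkeeping showing each non-unit numerator has $2$-adic valuation strictly below that of the denominator.
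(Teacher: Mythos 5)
Your argument is correct and is essentially the paper's intended proof: the theorem is stated there as an immediate consequence of the nondyadic ADC classification (the formula $\delta_v(f) = \frac{aq+b}{2q+2}$ with $a,b$ counting represented unit and non-unit square classes) together with the anisotropic ternary computation over $\Z_2$ giving $\delta_2(f) \in \{\frac{5}{6}, \frac{11}{12}\}$, which is exactly what you assemble, including the correct handling of the unary case not covered by the stated ``consequence'' theorem. The only pair missing from your parity check is $(a,b) = (0,0)$, which cannot occur because a nondegenerate form represents at least one nonzero value (the paper's corollary records $\delta_v(f) > 0$), so this is not a real gap.
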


\subsection{Some examples}

\begin{example}
\label{INTERESTINGEXAMPLE}
Let $p > 2$ and let $f = x^2 + py^2 - pz^2$.  The binary subform $py^2-pz^2 = p(y^2-z^2)$ is isotropic, hence so is $f$.
Since $y^2-z^2 \cong yz$ is universal, the form $f$ represents every square class with valuation at least $1$.  Clearly it also
represents $\Z_p^{\times 2}$, but it does not represent the unit non-residue $r$: for all $x,y,z \in \Z_p$, $x^2 + py^2 - pz^2 \pmod{p}
\in \F_p^{\times 2} \cup \{0\}$.  We conclude
\[ \delta_p(f) = 1 - \frac{p-1}{2p} = \frac{p+1}{2p}. \]
Thus $v_2(\delta_p(f))$ is non-negative for all $p > 2$, is strictly positive if and only if $p \equiv 3 \pmod{4}$ and is arbitrarily large on a set
of primes of positive relative density.
\end{example}


\begin{example}
Let $n \geq 3$, $p > 2$ and put
\[ f = t_1^2 + p^2 t_2^2 - p^2 t_3^2 - \ldots  - p^2 t_n^2. \]
Then the subform $p^2 t_2^2 - p^2 t_3^2$ is isotropic and represents all $x$ with $v(x) \geq 2$.  Moreover $f$ does represent
$1$ and does not represent any of $r$, $p$, $rp$.  So
\[ \delta_p(f) = 1-\frac{p-1}{2p} - \frac{p-1}{2p^2} - \frac{p-1}{2p^2} = \frac{p^2-p+2}{2p^2}. \]
\end{example}

\begin{example}
Let $n \geq 4$ and let $p > 2$.  Choose $a_1,a_2 \in \Z_p^{\times}$ such that $-a_1a_2 \equiv r \pmod{\Z_p^{\times 2}}$.
Let $f_0 = a_1 t_1^2 + a_2 t_2^2 + p t_3^2$ and
\[ f(t_1,\ldots,t_n) \coloneqq f_0(t_1,t_2,t_3) + p^2 t_4^2 + \ldots + p^2 t_n^2. \]
The ternary subform $f_0$ is maximal and anisotropic, so represents precisely
those elements of $\Z_p$ that do not lie in the $\Q_p$-adic square class of $rp$.  If $x_1,\ldots,x_n \in \Z_p$ are such that
\[ f(x_1,\ldots,x_n) = a_1 x_1^2 + a_2 x_2^2 + p x_3^2 + p^2 x_4^2 + \ldots + p^2 x_n^2 = rp, \]
then $a_1 x_1^2 + a_2 x_2^2 \equiv 0 \pmod{p}$ and thus there are $X_1,X_2 \in \Z_p$ with $x_1  =p X_1$, $x_2 = p X_2$.
Making this substitution and simplifying, we get
\[ p a_1 X_1^2 + pa_2 X_2^2 + x_3^2 + p x_4^2 + \ldots + p x_n^2 = r.\]
Reducing mod $p$ gives $x_3^2 \equiv r \pmod{p}$: contradiction.  So $f$ does not represent $rp$.  \\ \indent
Since $f_0$ represents $p^2(rp-1)$ and $p^2 t_4^2 + \ldots + p^2 t_n^2$ represents $p^2$, $f$ represents
$p^2(rp-1) + p^2 = rp^3$, hence also $rp^5$, $rp^7$ and so forth.  Thus $f$ represents everything but the $\Z_p$-square
class of $rp$, so
\[ \delta_p(f) = 1-\frac{p-1}{2p^2}. \]
\end{example}

\section{Proof of the Product Formula}
\noindent
Suppose first that $n=1$. Then for all but finitely many primes $p$, we have \[\delta_p(f)= \frac{p-1}{2p} \bigg{(}1+1/p^2+1/p^4+\dots \bigg{)} = \frac{p}{2(p+1)} < 1/2, \] so $\prod_{p} \delta_p(f) = 0$. Clearly, $\delta(f)=0$ in this case as well, so the formula \eqref{PRODUCTFORMULA} holds. If $n=2$ and $f_{/\Z}$ is anisotropic, again both sides of \eqref{PRODUCTFORMULA} vanish, as shown in \S \ref{sec:anisotropicbinary}. 

Thus, we may assume that either $n\ge 3$, or that $n=2$ and $f_{/\Z}$ is isotropic. In either case, we may fix a finite set of primes $S$ such that $f_{/\Z_p}$ is universal for all $p\notin S$. For if $n \geq 3$, we have seen already that $\delta_p = 1$ for all odd primes not dividing the discriminant. When $n=2$ and $f$ is isotropic, then for all odd primes not dividing the discriminant, $f_{/\Z_p}$ is isomorphic to the hyperbolic plane and hence universal. Since $\delta_f = \delta_{\loc}(f)$ (the Density Hasse Principle), to complete the proof of \eqref{PRODUCTFORMULA} it will suffice to show that 
\begin{equation}\label{eq:densityhasse2} \delta_{\loc}(f) = \prod_{p\in S} \delta_p(f). \end{equation}

Let $p\in S$. Asking that the nonzero integer $n$ be represented by $f_{/\Z_{p}}$ is equivalent to requiring that $n$ lie in one of the $\Z_p$-square classes represented by $f$. For $K \in \Z^{+}$, let $U_{p,K}$ be the union of the represented $\Z_p$-square classes whose $p$-adic valuation is smaller than $K$. Then $U_{p,K}\cap \Z$ is a union of residue classes modulo $p^{K+2}$, and the density of $U_{p,K}\cap \Z$ coincides with the Haar measure $\delta_{p,K}(f)$ (say) of $U_{p,K}$. 

It now follows from the Chinese Remainder Theorem that the set of $n$ that are locally represented by $f$ and not divisible by $p^K$ for any $p \in S$ has density 
\[ \delta_{K}(f):= \prod_{p \in S} \delta_{p,K}(f). \]
In particular, the set of locally represented integers has lower density bounded below by $\delta_{K}(f)$, for any $K$. Now letting $K\to\infty$, and noting that $$\delta_{p,K}(f) \nearrow \delta_p(f) \qquad\text{for each fixed $p \in S$,}$$ we deduce that the set of locally represented integers has lower density at least $\prod_{p \in S} \delta_p(f)$. In the other direction, the upper density of locally represented integers is at most
\[ \delta_K(f) + \sum_{p \in S} \frac{1}{p^K} \le \prod_{p \in S} \delta_p(f) + \sum_{p \in S} \frac{1}{p^K}. \] 
Letting $K\to\infty$ bounds this upper density from above by $\prod_{p \in S} \delta_p(f)$. This completes the proof of \eqref{eq:densityhasse2} (and also \eqref{PRODUCTFORMULA}).


\section{Proof of Near Regularity}

\subsection{$n = 1$} For $a \in \Z^{\bullet}$, let $f = at^2$.  It follows from the Global Square Theorem
\cite[Thm. VI.3.7]{Lam} that $D_f = D_{f,\loc}$.

\subsection{$n=2$, anisotropic case}\label{sec:anisotropicbinary} Let $f = ax^2 + bxy + cy^2$ be an integral binary quadratic form, with discriminant $ac-\frac{b^2}{4}$ and Discriminant $\Delta = b^2-4ac$.  Then $f$ is isotropic if and only if $\Delta = B^2$ for some $B \in \N$.
\\ \\
First we suppose that $f$ is anisotropic.  Bernays \cite{Bernays12} showed that there
is a $\kappa_{\Delta} > 0$ such that
\[ \#D_f(X) \sim \frac{\kappa_{\Delta} X}{\log^{1/2} X}. \]
That $\#\mathcal{E}_f(X) = o(D_f(X))$ is also essentially due to Bernays.  A more explicit and modern treatment
is given in work of Odoni
\cite[\S 5]{Odoni77}.
\\ \\
It is much easier to show that $\delta(f) = \delta_{\loc}(f) = 0$.  The set $\mathcal{S}$ of prime numbers $p > 2$ such that
$\left( \frac{-\disc(f)}{p} \right) = -1$ has density $\frac{1}{2}$ within the primes.  For each such prime,
$f_{/\Z_p}$ is anisotropic and represents precisely the two unit square classes, so $\delta_p(f) = 1- \frac{1}{p+1}$.  Therefore
\[ 0 \leq \delta \leq \delta_{\loc}(f) \leq \prod_{p \in \mathcal{S}} \left(1-\frac{1}{p+1}\right) = 0.\]

\subsection{$n = 2$, isotropic case}
Suppose $f$ is isotropic, so $\Delta = B^2$ for some $B \in \Z^+$.   We may assume without loss of generality that $f$ is primitive.   The form $f$ is maximal if and only if $\Delta = 1$, in which case $f$ is a maximal lattice in the hyperbolic
plane, and by \cite[Lemma 29.8]{Shimura} we have $f \cong_{\Z} xy$ and $f$ is universal.  Thus we may
assume that $\Delta > 1$.  Gauss showed \cite[Art. 206]{Gauss} that $f$ is $\SL_2(\Z)$-equivalent to a form
\begin{equation}
\label{GAUSSFORMEQ}
 Ax^2 + Bxy, \ 1 \leq A < B,  \ \gcd(A,B) = 1.
\end{equation}

\begin{lemma}
\label{PAULISOLEMMA1}
Let $a$ and $k$ be coprime positive integers.  Then almost all positive integers have a prime factor
$p \equiv a \pmod{k}$ -- i.e., the exceptional set has density zero in $\Z^+$.
\end{lemma}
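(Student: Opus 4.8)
The plan is to bound the \emph{upper} density of the exceptional set directly, using nothing more than the divergence of the sum of reciprocals of the primes in the progression $a \bmod k$. Write
\[ E \coloneqq \{ n \in \Z^+ \mid p \nmid n \text{ for every prime } p \equiv a \pmod{k}\} \]
for the set we must show has density $0$. Since $\gcd(a,k)=1$, Dirichlet's theorem provides infinitely many primes $p \equiv a \pmod{k}$; fix an enumeration $p_1, p_2, \ldots$ of them.

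First I would record the trivial but crucial containment: for every $M \geq 1$,
\[ E \subseteq A_M \coloneqq \{ n \in \Z^+ \mid p_i \nmid n \text{ for } 1 \leq i \leq M\}, \]
since an element of $E$ is divisible by none of the $p_i$. The set $A_M$ is a union of residue classes modulo $p_1 \cdots p_M$, so by inclusion--exclusion (equivalently, the Chinese Remainder Theorem) it has a genuine density, namely
\[ \delta(A_M) = \prod_{i=1}^{M} \left(1 - \frac{1}{p_i}\right). \]
Consequently $\overline{\delta}(E) \leq \delta(A_M) = \prod_{i=1}^M (1 - 1/p_i)$ for \emph{every} $M$.

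The conclusion then follows on letting $M \to \infty$. The key analytic input is that $\sum_{p \equiv a \pmod{k}} 1/p$ diverges; here only the qualitative divergence is needed, not a rate. This divergence is equivalent to the vanishing of the infinite product $\prod_{p \equiv a \pmod{k}}(1 - 1/p)$, so $\prod_{i=1}^M(1-1/p_i) \to 0$ as $M \to \infty$. Therefore $\overline{\delta}(E) = 0$, and since the lower density is trapped between $0$ and the upper density, $E$ has density $0$, as claimed.

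The only substantive ingredient is the divergence of $\sum_{p \equiv a \pmod{k}} 1/p$, and this is precisely where the hypothesis $\gcd(a,k)=1$ enters: for $\gcd(a,k) > 1$ there are at most finitely many such primes and the assertion is false. Everything else is an elementary density computation, so I expect no real obstacle beyond correctly citing Dirichlet's theorem.
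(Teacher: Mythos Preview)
Your proof is correct and is essentially the same as the paper's: both bound the upper density of the exceptional set by the finite product $\prod_{i \leq M}(1-1/p_i)$ (the paper indexes by $p \leq B$ rather than by count, a cosmetic difference) and then invoke the divergence of $\sum_{p \equiv a \pmod{k}} 1/p$ to send the product to~$0$. The paper inserts the intermediate inequality $\prod(1-1/p) \leq \exp\bigl(-\sum 1/p\bigr)$ whereas you appeal directly to the equivalence of divergence and vanishing of the product, but this is the same argument.
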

\begin{proof}
For each fixed $B \geq 1$, the upper density of the exceptional set is bounded above by the density
of the set of $n \in \Z^+$ that are divisible by no prime $p \equiv a \pmod{k}$ with $p \leq B$, i.e., by
\[ \prod_{p \leq B, \ p \equiv a \pmod{k}} \left(1-\frac{1}{p} \right) . \]
This quantity is at most
\[ \exp \left( - \sum_{p \leq B, \ p \equiv a \pmod{k}} \frac{1}{p} \right). \]
The usual proof of Dirichlet's theorem on primes in arithmetic progressions gives \[\sum_{p \equiv a \pmod{k}} \frac{1}{p} = \infty. \] Taking $B \ra \infty$, the result follows.
\end{proof}

\begin{lemma}
\label{PAULISOLEMMA2}
As $K$ appoaches infinity, the density of the set of integers that are \emph{not} $K$th powerfree
approaches $0$.
\end{lemma}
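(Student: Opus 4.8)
The plan is to reduce the statement to the elementary estimate that the upper density of the non-$K$th-powerfree integers is at most $\sum_p p^{-K}$, and then to observe that this tail tends to $0$.

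First I would record that a positive integer $n$ fails to be $K$th powerfree precisely when $p^K \mid n$ for some prime $p$, so that the set in question is $\bigcup_p \{ n \in \Z^+ : p^K \mid n \}$. Since the natural density is not countably subadditive, I would not argue from the individual densities $p^{-K}$ of the sets $\{ n : p^K \mid n\}$ directly, but instead bound the counting function uniformly. For every $N \geq 1$,
\[ \#\{ n \leq N : n \text{ is not } K\text{th powerfree}\} \leq \sum_{p :\, p^K \leq N} \left\lfloor \frac{N}{p^K} \right\rfloor \leq N \sum_p \frac{1}{p^K}, \]
where the last sum runs over all primes. Dividing by $N$ and letting $N \to \infty$ shows that the upper density of the exceptional set is at most $\sum_p p^{-K}$.

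It then remains to control this sum as $K$ grows. Bounding the sum over primes by the sum over all integers $m \geq 2$ gives
\[ \sum_p \frac{1}{p^K} \leq \sum_{m=2}^{\infty} \frac{1}{m^K} = \zeta(K) - 1, \]
which tends to $0$ as $K \to \infty$ because $\zeta(K) \to 1$. Hence the (upper) density of the non-$K$th-powerfree integers approaches $0$; one may note in passing that this density in fact exists and equals $1 - 1/\zeta(K) = 1 - \prod_p (1 - p^{-K})$, though only the upper bound is needed for the applications.

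I expect no genuine obstacle here. The single point requiring care is the passage from the infinite union over primes to a density bound, which is handled by the uniform counting estimate above rather than by any (false) appeal to countable additivity of density; the restriction $p^K \leq N$ in the sum makes the bound valid for each fixed $N$ while the majorant $\sum_p p^{-K}$ is independent of $N$.
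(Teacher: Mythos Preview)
Your proof is correct and follows essentially the same route as the paper's: both bound the counting function of non-$K$th-powerfree integers by $N\sum_{m\geq 2} m^{-K}=N(\zeta(K)-1)$ (you first sum over primes and then majorize by all $m\geq 2$, while the paper goes directly to integers $m\geq 2$), and then let $K\to\infty$. The extra remark about the exact density $1-1/\zeta(K)$ is a nice aside but, as you note, not needed.
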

\begin{proof}
We show the statement for the positive integers, which is sufficient.  If $n \in \Z^+$ is not $K$th power free then there is an integer $m \geq 2$ such that $m^K \mid n$.  It follows that for each
$x > 0$, the number of such $n \leq x$ that are not $K$th power free is at most $\sum_{m =2}^{\infty} \frac{x}{m^K}$, and thus the
upper density of the set of non-$K$th-powerfree positive integers is at most $\sum_{m=2}^{\infty} \frac{1}{m^K}$.  This quantity tends to zero as $K$ approaches infinity, e.g. by the Integral Test.
\end{proof}
\noindent
Let $f = Ax^2 + Bxy$ with $1 \leq A < B$ and $\gcd(x,y) = 1$.  We will prove that almost every integer
locally represented by $f$ is globally represented.  Taking $x = 1$ we see that $D_f$ contains every element
congruent to $A$ modulo $B$, so $\#D_f(X) \gg X$.  So an equivalent statement
of the result is that $\#\mathcal{E}_f(X) = o(X)$.  Fix $K \in \Z^+$, and let $\mathcal{E}_{f,K}(X)$ be the subset of
$\mathcal{E}_f(X)$ consisting of integers that are $K$th power free.  By Lemma \ref{PAULISOLEMMA2},
it is enough to show that for all $K \geq 2$ we have $\#\mathcal{E}_{f,K}(X) = o_K(X)$.  Put
\[ M \coloneqq \prod_{p \mid B} p^{K+v_p(B)}. \]
Let $n \in \mathcal{E}_{f,K}(X)$.  It follows from Lemma \ref{PAULISOLEMMA1} that for a density $1$ subset of $n \in \Z$ we have
that for all $c$ coprime to $M$ there is a prime $p \mid n$ such that $p \equiv c \pmod{M}$.  Henceforth
we restrict to $n$ in this subset and such that $f$ locally represents $n$.  For every prime $p \mid n$
there are $x_{0,p},y_{0,p} \in \Z_p$ such that
\begin{equation}
\label{PISOEQ1}
x_{0,p}(A x_{0,p} + B y_{0,p}) = n.
\end{equation}

\vskip 0.1in

\noindent Step 1: We show that there is $x \in \Z$ such that
\begin{equation}
\label{PISOEQ2}
x \mid n \text{ and } \forall p \mid B, \ x \equiv x_{0,p} \pmod{p^{K+v_p(B)}}.
\end{equation}
To see this, let $r \in \Z$ be such that $r \equiv x_{0,p} \pmod{p^{K+v_p(B)}}$ for all $p \mid B$.  Let $p \mid B$.  It follows from (\ref{PISOEQ1}) that
\[ v_p(x_{0,p}) \leq v_p(n) \leq K \]
and thus
\[ v_p(r) = v_p(x_{0,p}). \]
It follows that there is an integer $u$ coprime to $M$ such that $u\prod_{p \mid B} p^{v_p(x_{0,p})} \equiv r \pmod{M}$.  Because of our assumption on $n$, we may take $u$ to be the class in $\Z/M\Z$ of a prime $q \mid n$, and thus
\[ x \coloneqq q \prod_{p \mid B} p^{v_p(x_{0,p})} \]
satisfies (\ref{PISOEQ2}).
\vskip 0.1in

\noindent Step 2: Let $y_{1,p} \in \Z$ be congruent to $y_{0,p}$ modulo $p^{K+v_p(B)}$.  Reducing (\ref{PISOEQ1})
modulo $p^{K+v_p(B)}$, we get
\[ x(Ax+B y_{1,p}) \equiv n \pmod{p^{K+v_p(B)}}. \]
Since $v_p(x) \leq v_p(n) \leq K$, we get
\[ Ax \equiv Ax + B y_{1,p} \equiv \frac{n}{x} \pmod{p^{v_p(B)}}. \]
Since this holds for all $p \mid B$ we have
\[ Ax \equiv \frac{n}{x} \pmod{B}. \]
Thus we can write $\frac{n}{x} = Ax + By$ for some $y \in \Z$, so $n = x(Ax+By) = f(x,y)$.


\begin{example}
Let $f(x,y) = x^2 + 5xy$.  If $\ell \equiv 4 \pmod{5}$ is a prime number, then $f$ $\Z_p$-represents $\ell$ for all primes $p$: 
for $p \neq 5$, there is a representation with $x = 1$; over $\Z_5$ there is a representation with $y = 0$.  However, it is 
easy to check that the equation $x(x+5y) = \ell$ has no solutions $(x,y) \in \Z^2$.  Thus $f$ is not almost regular, and Dirichlet's theorem on primes in arithmetic progressions gives $\mathcal{E}_f(X) \gg \frac{X}{\log X}$.  
\end{example}

\subsection{$n = 3$, positive case}

Let $f_{/\Z}$ be a positive $r$-ary quadratic form.  The $n$th coefficient of the Eisenstein projection of the
theta series of $f$ can be written as
\[
a_{E}(n) = \prod_{p \leq \infty} \beta_{p}(n)
\]
where for finite $p$,
\[
\beta_{p}(n) = \lim_{k \to \infty}
\frac{\# \{ x \in (\Z/p^{k} \Z)^{r} : f(x) \equiv n \pmod{p^{k}} \}}{p^{(r-1)k}},
\]
and if we write $f(t) = t^T A t$, then
\[
\beta_{\infty}(n) = \frac{r \pi^{r/2} n^{(r-2)/2}}{2 \Gamma\left(\frac{r}{2} + 1\right) \det(A)^{1/2}}.
\]

For a prime $p$, we write the local Jordan splitting of $f$ (as in
\cite{Hanke04}) as
\[
  f(x) = \sum_{j} p^{v_{j}} f_{j}(x_{j}).
\]
If $p > 2$, each $f_{j}$ is one-dimensional, while $f_{j}$ can be
two-dimensional if $p = 2$. We let $r_{p^{k},f}(m) = \# \{ x \in
(\Z/p^{k} \Z)^{r} : f(x) \equiv m \pmod{p^{k}} \}$. A solution
to this congruence is called \emph{good type} if $p^{v_{j}}
x_{j} \not\equiv 0 \pmod{p}$ for some $j$. It is called
\emph{zero type} if $x \equiv 0 \pmod{p}$, and \emph{bad type}
otherwise. In \cite{Hanke04}, Hanke gives a recursive method to determine the number of
good type, bad type, and zero type solutions to $f(x) \equiv m \pmod{p^{k}}$.

\begin{prop}
\label{PROP3.4}
Let $f_{/\Z}$ be a positive ternary quadratic form.  As $X \to \infty$ we have $\mathcal{E}_f(X) = O(\sqrt{X})$. 
%
\end{prop}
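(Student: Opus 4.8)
The plan is to exploit the modular structure just set up. The theta series $\theta_f$ of the positive ternary form $f$ is a weight $3/2$ modular form, and I would write it as $\theta_f = E_f + g$, the sum of its Eisenstein projection and a cusp form $g$. Taking $n$-th Fourier coefficients gives $r_f(n) = a_E(n) + a_g(n)$, where $r_f(n)$ counts representations of $n$ by $f$ and $a_E(n) = \beta_\infty(n)\prod_p \beta_p(n)$ is the product of the local densities recorded above. Since $\beta_p(n) > 0$ exactly when $f$ represents $n$ over $\Z_p$, a positive integer $n$ lies in $D_{f,\loc}$ if and only if $a_E(n) > 0$; and $n \in \mathcal{E}_f(X)$ precisely when moreover $r_f(n) = 0$, i.e. when $a_g(n) = -a_E(n)$, forcing $|a_g(n)| = a_E(n)$. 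The whole problem is thus to bound the number of $n \le X$ at which the cusp coefficient $a_g(n)$ meets the main term $a_E(n)$.

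First I would bound the main term from below. The archimedean factor is $\beta_\infty(n) \asymp n^{1/2}$ from the displayed formula with $r = 3$. For the finite part I would use Hanke's recursive description of the local densities: away from the finitely many primes dividing $2\det(A)$ and the anisotropic primes, $\beta_p(n)$ differs from $1$ by $O(1/p)$, and the product over these primes is an $L$-value of size $n^{\pm \epsilon}$ (bounded below, ineffectively, by Siegel). The delicate contribution is at the anisotropic primes, where the good-type/zero-type/bad-type recursion shows $\beta_p(n)$ can be depressed only when $n$ is divisible by a large even power of $p$. Writing $n = t m^2$ with $t$ squarefree, I would isolate a finite set $T$ of squarefree classes — the spinor-exceptional classes, dictated by the anisotropic primes and $\disc f$ — outside of which $a_E(n) \gg_\epsilon n^{1/2-\epsilon}$ uniformly.

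Next I would bound the cusp coefficients. Rankin--Selberg applied to the weight $3/2$ cusp form $g$ gives $\sum_{n \le X} a_g(n)^2 \ll X^{3/2}$. For an exception $n = t m^2$ with $t \notin T$ we have $a_g(n)^2 = a_E(n)^2 \gg_\epsilon n^{1-\epsilon}$, so such $n$ number at most $O(X^{1/2+\epsilon})$; invoking Duke's subconvex bound for half-integral weight coefficients — which makes $a_g(n)$ genuinely smaller than $a_E(n)$ once the squarefree part $t$ is large while the square part is controlled, via the Shimura correspondence — sharpens this to show there are only finitely many exceptions with $t \notin T$ and square part below any fixed bound, hence $o(\sqrt X)$ in total. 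Finally, every $n$ with $t \in T$ lies in one of the finitely many square-progressions $\{t m^2\}$, which together contain at most $\sum_{t \in T}\sqrt{X/t} = O(\sqrt X)$ integers up to $X$. Bounding the latter trivially by their cardinality, whether or not they are genuine exceptions, and combining with the $o(\sqrt X)$ contribution from $t \notin T$, yields $\#\mathcal{E}_f(X) = O(\sqrt X)$.

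The main obstacle is the uniform lower bound on $a_E(n)$: carrying out Hanke's local-density computation at the anisotropic and dyadic primes carefully enough to pin down exactly which square classes $t$ can degrade the main term, and checking that for all other $t$ the product stays $\gg_\epsilon n^{1/2-\epsilon}$. The second genuinely hard input is the half-integral weight analytic estimate — the Rankin--Selberg mean-square bound is elementary, but separating the main term from the cusp coefficient for \emph{individual} $n$ relies on the Shimura correspondence together with a subconvex (Duke--Iwaniec) bound, and it is the interplay of this bound with the square-class structure that converts the crude $O(X^{1/2+\epsilon})$ count into the clean $O(\sqrt X)$.
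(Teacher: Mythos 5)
Your overall strategy is the right one and matches the paper's in outline, but there is a genuine gap at the decomposition step. You split $\theta_f = E_f + g$ with $g$ the full cuspidal projection and then argue that for $n = tm^2$ with $t \notin T$ the cusp coefficient is ``genuinely smaller'' than $a_E(n)$ by Duke--Iwaniec subconvexity via the Shimura correspondence. In weight $3/2$ this is false for the full cuspidal part: the space of weight $3/2$ cusp forms contains the unary theta series $\sum_n \psi(n)\, n\, q^{bn^2}$, whose $n$th coefficient on the square classes $bm^2$ has size exactly $n^{1/2}$ --- the same order as $a_E(n)$ --- and the Shimura lift/subconvexity machinery (Duke, Iwaniec, Blomer) applies only to the component of $g$ \emph{orthogonal} to these unary theta series. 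This is precisely why the paper uses the three-term decomposition $\theta_f = E + H + C$: the coefficients of $C$ are bounded by $n^{13/28+\epsilon}$ (Blomer), while $H$ is handled not by a size bound but by the observation that its support lies in finitely many square classes $\{b_i m^2\}$, contributing $O(\sqrt{X})$ integers up to $X$.

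The gap is not merely cosmetic, because your exceptional set $T$ is defined by where the \emph{Eisenstein} coefficient degrades (anisotropic primes depressing the local densities), and the square classes supporting the unary theta series need not lie in that set. The paper's Example~\ref{ROUSEEX} is a concrete counterexample to your claim that exceptions with $t \notin T$ are finite: for $f = 3x^2+4y^2+9z^2$ one has $a_E(k^2) \geq k$ for $k$ coprime to $6$ (so $t=1$ would not enter your $T$), yet there are $\asymp \sqrt{X}/\log^{1/4}X$ locally represented non-represented integers of the form $k^2$, all coming from cancellation against the unary theta component. The fix is exactly the paper's move: enlarge the collection of excluded square classes to include the finitely many squarefree $b_i$ supporting $H$ (counting all integers in those classes trivially as $O(\sqrt{X})$), and apply the individual coefficient bound only to $C$. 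With that repair --- plus the paper's use of Hanke's reduction (Corollary 3.8.2 of \cite{Hanke04}) to handle $n$ deeply divisible by anisotropic primes, which you absorb into $T$ and which is fine --- your argument becomes essentially the paper's. Your intermediate Rankin--Selberg bound $O(X^{1/2+\epsilon})$ is correct but ends up unused.
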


\begin{proof}
Let $\theta_{f}(z) \coloneqq \sum_{n=0}^{\infty} r_{f}(n) q^{n}, \quad q = e^{2 \pi i z}$ be the theta series of $f$. We may decompose
\[
  \theta_{f}(z) = E(z) + H(z) + C(z) = \sum_{n=0}^{\infty} a_{E}(n) q^{n} + \sum_{n=0}^{\infty} a_{H}(n) q^{n} + \sum_{n=0}^{\infty} a_{C}(n) q^{n}.
\]
Here $E(z)$ is a weight $3/2$ Cohen-Eisenstein series, $H(z)$ is a weight $3/2$ cusp form that is a linear combination of unary theta series (modular forms
of the shape $\sum_{n \in \Z} \psi(n) n q^{n^{2}}$ where $\psi$ is an odd Dirichlet character),
and $C(z)$ is the projection of $\theta_{f}(z)$ onto the orthogonal complement in the space of weight $3/2$ cusp forms of unary theta series. Let $N(f)$
be the level of $f$.

{\bf Cohen-Eisenstein series}:  It follows from a multivariate form of Hensel's lemma that $\beta_{p}(n) > 0$ if and only if there is a solution to $f(x) = n$ in $\Z_{p}$.

If $p \nmid N(f)$ and $p \nmid n$, Hanke gives the formula
\[
  \beta_{p}(n) = 1 + \frac{1}{p} \legen{-\det(A) n}{p}.
\]
(See Table 1 on the top of page 363 of \cite{Hanke04}.)

Now we consider the case that $p \nmid N(f)$ but $p \mid n$. In this
case, $f_{/\F_p}$ is isotropic (because if $f$
is anisotropic, then $p^{2} \mid N(f)$). As a consequence, $f(x,y,z) = 0$
defines a conic over $\F_{p}$ which has a point on it. Therefore
$f(x,y,z) = 0$ is isomorphic to $\P^{1}$ over $\F_{p}$ and there are
$p+1$ (projective) points on it. This yields $(p+1)(p-1) = p^{2} - 1$
solutions to the equation $f(x,y,z) \equiv 0 \pmod{p}$ with $(x,y,z) \not\equiv (0,0,0) \pmod{p}$. These are ``good type'' solutions in the
terminology of Hanke, and from the recursive formula for good type
solutions (Lemma 3.2 of \cite{Hanke04}) it follows that $\beta_{p}(n)
\geq \frac{p^{2}-1}{p^{2}}$.

If $p \mid N(f)$ but $f_{/\Q_p}$ is isotropic, then
there are finitely many possibilities for
$\beta_{p}(n)$ if $v_{p}(n) < v_{p}(N(f))$.
If $v_{p}(n) \geq v_{p}(N(f))$, then $n$ is \emph{$p$-stable}
in the terminology of Hanke (see Definition 3.6 and Remark 3.6.1),
and it follows that the contribution to $\beta_{p}(p^{2v} n)$ of the
good type and bad type solutions is constant for $v \geq 1$.
Moreover, by Lemma 3.8, this contribution is nonzero, and it follows
that there is an absolute lower bound on $\beta_{p}(n)$ over all
$n$ that are locally represented by $f$.

Finally, if $f_{/\Q_p}$ is anisotropic, then again
there are finitely many possibilities for $\beta_{p}(n)$ if
$v_{p}(n) \leq v_{p}(N(f)) + 1$. The net result is that there is an
absolute constant $C_{E}$ so that
\[
a_{E}(n) \geq C_{E} \beta_{\infty}(n) \prod_{\substack{p \nmid N \\ p \mid n}}
\bigg{(}1 - \frac{1}{p^{2}}\bigg{)} \prod_{\substack{p \nmid N \\ p \nmid n}}
\left(1 + \frac{1}{p} \legen{-\det(A) n}{p}\right),
\]
provided $n$ is locally represented by $f$ and $v_{p}(n) \leq
v_{p}(N(f)) + 1$ for all anisotropic primes $p$.

We have that $\beta_{\infty}(n) = \frac{2 \pi \sqrt{n}}{\det(A)^{1/2}}$.
If we let $\chi_{n}$ be the unique primitive Dirichlet character
with $\chi_{n}(p) = \legen{-\det(A) n}{p}$ for $p$ prime with
$p \nmid \det(A) n$, then we obtain
\[
a_{E}(n) \geq \frac{12 C_{E} \sqrt{n}}{\pi} L(1,\chi_{n})
\prod_{p \mid nN} \left(1 + \frac{\chi_{n}(p)}{p}\right)^{-1}.
\]
Now, we use the ineffective lower bound $L(1,\chi_{n}) \gg
n^{-\epsilon}$. Also note that if $k$ is squarefree, then $\prod_{p | k}
\left(1 - \frac{1}{p}\right) = \frac{\phi(k)}{k} \gg k^{-\epsilon}$
for all $\epsilon > 0$. It follows from this that
\[
a_{E}(n) \gg n^{1/2 - \epsilon}
\]
provided $n$ is locally represented and $v_{p}(n) \leq v_{p}(N(f)) + 1$
for all anisotropic primes $p$.

{\bf Unary theta series}: Because $H(z)$ is a linear combination of
unary theta series, there are finitely many squarefree integers
$b_{1}, b_{2}, \ldots, b_{k}$ so that $a_{H}(n) = 0$ unless $n/b_{i}$
is the square of an integer.

{\bf Cusp form part}: Theorem~1 of \cite{Blomer04} gives that for all $\epsilon > 0$
\[
  |a_{C}(n)| \ll_{f,\epsilon} n^{13/28 + \epsilon}.
\]

Since
\[
r_{f}(n) = a_{E}(n) + a_{H}(n) + a_{C}(n),
\]
the bounds above imply there are only finitely many $n$ for which
\begin{enumerate}
\item[(i)] $a_{H}(n) = 0$,
\item[(ii)] $n$ is locally represented, and
\item[(iii)] $v_{p}(n) \leq v_{p}(N(f)) + 1$ for all anisotropic primes $p$
\end{enumerate}
which are not represented by $f$.

Let $c_{1}, c_{2}, \ldots, c_{\ell}$ be the list of all positive
integers for which $a_{H}(n) = 0$, that are not represented by $f$,
which are locally represented, and for which $v_{p}(c_{i}) \leq
v_{p}(N(f)) + 1$ for all anisotropic primes $p$.

If $n$ is a positive integer that is locally represented by $f$ but
not represented, one possibility is that $a_{H}(n) \ne 0$. In this
case $n/b_{i}$ is a square for some $i$ and there are $O(\sqrt{x})$ many
such $n \leq x$.

If $n$ is a positive integer that is locally represented by $f$ but
not represented and $a_{H}(n) = 0$, let \[S \coloneqq \{ p : p | n \text{ is
  anisotropic and } v_{p}(n) > v_{p}(N(f)) + 1 \} = \{ p_{1},
p_{2}, \ldots, p_{s} \}. \] Corollary 3.8.2 of \cite{Hanke04} implies that
if $n' = n/\prod_{i=1}^{s} p_{i}^{2v_{i}}$ is such that $v_{p}(n')
= v_{p}(N(f))$ or $v_{p}(n') = v_{p}(N(f)) + 1$ for all $p
\in S$, then $r_{f}(n) = 0 \implies r_{f}(n') = 0$.  This $n'$ must be
one of the $c_{i}$ mentioned above, and so $n/c_{i} = \prod_{i=1}^{s}
p_{i}^{2v_{i}}$ is a perfect square. The number of such $n \leq x$ is
$O((\log x)^{t})$, where $t$ is the number of anisotropic primes for
$f$.
\end{proof}

\begin{example}
\label{ROUSEEX}
The form $f(x,y,z) = 3x^{2} + 4y^{2} + 9z^{2}$ locally represents integers $n$ with $v_{2}(n) \ne 1$
that are not in the $\Q_{3}$ square class of $-3$. We will show that the locally represented integers
that are not represented are those of the form $k^{2}$ with all prime factors of $k \equiv 1 \pmod{3}$. This fact was observed by Jones and Pall \cite[Table II]{JonesPall} and a proof of this is given by \cite[Lemma 5]{Schulze-Pillot80}. We illustrate the techniques in the proof of Proposition~\ref{PROP3.4} by giving a self-contained proof.

The decomposition of $\theta_{f}(z) = E(z) + H(z) + C(z)$ has $C(z) = 0$ and
\[H(z) = -\frac{1}{2} \sum_{n=-\infty}^{\infty} n \chi_{3}(n) q^{n^{2}} - 2 \sum_{n=-\infty}^{\infty} n \chi_{3}(n) q^{4n^{2}}. \]
Here $\chi_{3}(n)$ is the non-principal Dirichlet character modulo $3$. It follows that if $n$ is not a square and $n$ is locally represented,
then $a_{E}(n) > 0$ and $a_{H}(n) = 0$ and so $n$ is represented by $f$. If $n = k^{2}$ with $k$ even, then
$f(0,k/2,0) = n$, and if $n = k^{2}$ with $k$ a multiple of $3$, then $f(0,0,k/3) = n$.

Suppose that $n$ is a square which is coprime to $6$. Then the $n$th coefficient of $H(z)$ is $-n^{1/2}$.
A straightforward computation with Yang's formulas for local densities \cite{Yang} shows that if $n$ is a square coprime to $6$, then
\[
  \beta_{p}(n) = \begin{cases}
    \frac{\pi \sqrt{3} n^{1/2}}{9} & \text{ if } p = \infty,\\
    1 + \frac{1}{p} + (\chi_{3}(p)-1) \cdot \frac{1}{p^{v_{p}(n)/2 +1}} & \text{ if } 3 < p < \infty,\\
    2 & \text{ if } p = 3,\\
    1 & \text{ if } p = 2.
\end{cases}
\]
This implies that
\begin{align*}
  a_{E}(n) &= \prod_{p \leq \infty} \beta_{p}(n) = \frac{2 \pi \sqrt{3} n^{1/2}}{9} \prod_{\substack{p > 3}} \left(1 + \frac{1}{p} + \frac{\chi_{3}(p) - 1}{p^{v_{p}(n)/2+1}}\right)\\
  &= \frac{2 \pi \sqrt{3} n^{1/2}}{9} \prod_{p \mid n} \left(1 + \frac{1}{p} + \frac{\chi_{3}(p) - 1}{p^{v_{p}(n)/2+1}}\right) \prod_{\substack{p > 3 \\ p \nmid n}} \left(1 + \frac{\chi_{3}(p)}{p}\right)\\
  &= \frac{\pi \sqrt{3} n^{1/2}}{3} \prod_{p} \frac{p^{2}-1}{p^{2}} \prod_{p \mid n} \frac{p^{2}}{p^{2} - 1} \left(1 + \frac{1}{p} + \frac{\chi_{3}(p) - 1}{p^{v_{p}(n)/2+1}}\right) \prod_{\substack{p > 3 \\ p \nmid n}} \left(1 - \frac{\chi_{3}(p)}{p}\right)^{-1}\\
  &= \frac{\pi \sqrt{3}}{2} n^{1/2} \prod_{p} \frac{p^{2}-1}{p^{2}} \prod_{p} \left(1 - \frac{\chi_{3}(p)}{p}\right)^{-1}
  \prod_{p \mid n} \frac{p^{2}}{p^{2} - 1} \left(1 + \frac{1}{p} + \frac{\chi_{3}(p) - 1}{p^{v_{p}(n)/2+1}}\right) \left(1 - \frac{\chi_{3}(p)}{p}\right)\\ 
  &= \frac{\pi \sqrt{3} n^{1/2}}{2} \frac{1}{\zeta(2)} L(1,\chi_{3}) \prod_{p \mid n} \frac{p^{2}}{p^{2} - 1} \left(1 + \frac{1}{p} + \frac{\chi_{3}(p) - 1}{p^{v_{p}(n)/2+1}}\right) \left(1 - \frac{\chi_{3}(p)}{p}\right)\\ 
  &= \frac{\pi \sqrt{3} n^{1/2}}{2} \cdot \frac{6}{\pi^{2}} \cdot \frac{\pi \sqrt{3}}{9} \prod_{p \mid n} \frac{p^{2}}{p^{2} - 1} \left(1 + \frac{1}{p} + \frac{\chi_{3}(p) - 1}{p^{v_{p}(n)/2+1}}\right) \left(1 - \frac{\chi_{3}(p)}{p}\right)\\
  &= n^{1/2} \prod_{p \mid n} \frac{p^{2}}{p^{2} - 1} \left(1 + \frac{1}{p} + \frac{\chi_{3}(p) - 1}{p^{v_{p}(n)/2+1}}\right) \left(1 - \frac{\chi_{3}(p)}{p}\right).\\
\end{align*}
We have that
\[
  \frac{p^{2}}{p^{2} - 1} \left(1 + \frac{1}{p} + \frac{\chi_{3}(p) - 1}{p^{v_{p}(n)/2+1}}\right) \left(1 - \frac{\chi_{3}(p)}{p}\right) \geq 1
\]
with equality if and only if $\chi_{3}(p) = 1$. It follows that
\begin{align*}
  & n = k^{2} \text{ is not represented by } f \iff a_{E}(n) + a_{H}(n) = 0\\ 
  & \iff a_{E}(n) = n^{1/2} \iff \text{ for all primes } p \mid k, \chi_{3}(p) = 1.
\end{align*}

Thus we have
\[\mathcal{E}_f(X) = 
\# \{ n \leq x : n = k^{2}, \text{ and } p | k \implies p \equiv 1 \pmod{3} \}
\sim \frac{C x^{1/2}}{\log^{1/4}(x)}
\]
for some $C > 0$.  So the upper bound of Proposition \ref{PROP3.4} is sharp up to log factors.
\end{example}

\subsection{$n = 4$, positive case} Let $f_{/\Z}$ be a positive quaternary quadratic form.  Then every
sufficiently large $n \in \Z^+$ that is locally primitively represented -- for all primes $p$
there is $(x_1,x_2,x_3,x_4) \in \Z_p^4$ with at at least one $x_i$ not divisible by $p$ such that
$f(x_1,x_2,x_3,x_4) = n$ -- is primitively represented -- there is $(x_1,x_2,x_3x_4) \in \Z$
with $\gcd(x_1,x_2,x_3,x_4) = 1$ such that $f(x) = n$ \cite[Thm. 11.1.6]{Cassels}. \\ \indent
From this it follows easily that $\#\mathcal{E}_f(X) = O(\sqrt{X})$.  Indeed, every representation of a squarefree
integer is primitive, so $\mathcal{E}_f$ contains only finitely many squarefree exceptions, and there are $O(\sqrt{X})$ integers lying in the same rational square class as one of these squarefree exceptions.  Thus after removing $O(\sqrt{X})$ integers, we may assume 
that $1 \leq n \leq X$ is such that for every integer $m$ lying in the rational square class of $n$, if $m$ is primitively locally 
represented then it is represented.  We claim that $n$ is represented.  It certainly is if it is primitively locally represented.  If not, there is a prime $p \mid n$ such that $f$ $\Z_p$-represents $\frac{n}{p^2}$.
For all primes $\ell \neq p$, we have $\frac{1}{p^2} \in \Z_{\ell}^{\times 2}$ so $f$ $\Z_{\ell}$-represents
$\frac{n}{p^2}$. Thus $f$ locally represents $\frac{n}{p^2}$.   If the representation
is not locally primitive we can repeat the above argument, eventually getting that $f$ primitively represents
$m = \frac{n}{d^2}$ and thus also that $f$ represents $n$.

\subsection{$n \geq 5$, positive case} If $f_{/\Z}$ is a positive $n$-ary quadratic form with $n \geq 5$,
then by Corollary \ref{TARTCOR}, the form $f$ is almost regular, so $\#\mathcal{E}_f(X) = O(1)$.

\subsection{$n \geq 3$, indefinite case}
In this case the result follows from work of Eichler, Kneser, Weil and Hsia, as recorded in \cite[Thm. 5]{Schulze-Pillot13}.  Indeed, if $f_{/\Z}$ is indefinite $n$-ary with $n \geq 4$ then $f$ is regular and thus
$\#\mathcal{E}_f(X) = 0$.  If $f_{/\Z}$ is indefinite ternary then the elements of $\mathcal{E}_f$ lie in finitely
many rational square classes and thus $\#\mathcal{E}_f(X) = O(\sqrt{X})$.

\section{Lattice globalization}
\noindent
An $n$-ary quadratic form $f_{/\Z}$ has \textbf{signature} $(r,s)$ if \[f_{/\R} \cong x_1^2 + \ldots + x_r^2 -
x_{r+1}^2 - \ldots - x_{r+s}^2. \]
The following result amounts to telling us that for all $n \geq 3$, we can find an integral $n$-ary quadratic form that has prescribed 
signature, prescribed $\Z_p$-equivalence class at each of any finite set $S$ of prime numbers, and has $\delta_p(f) = 1$ 
for all primes $p \notin S$.  The underlying ``globalization'' construction is a well known one in the literature -- see e.g. \cite[Theorems 31.6 and 31.7]{Shimura} for a related result -- but as we have not found a result exactly of the form we need, we supply a complete proof.

\begin{thm}
\label{GLOBALIZATION}
Let $n,r,s \in \N$ with $n \geq 3$ and $r+s = n$.   Let $S$ be a finite set of prime numbers, and let $T$ be a subset of $S$.  If $n = 3$, we suppose that
$\# T$ is odd if $rs = 0$ and even otherwise.  For $p \in S$, let $f_p$ be an $n$-ary
quadratic form defined over $\Z_p$ that is anisotropic if and only if $p \in T$.  Then there is an $n$-ary quadratic form $f_{/\Z}$ such that: 
\begin{itemize}
\item For all primes $p \in S$, we have $f_{/\Z_p} \cong f_p$.  
\item The form $f_{/\R}$ has signature $(r,s)$. 
\item If $n = 3$, then for all primes $p \notin S$, we have that $f_{/\Z_p}$ is isotropic and maximal, hence universal.
\item If $n \geq 4$, then for all primes $p \notin S$, we have that $f_{/\Z_p}$ is universal.
\end{itemize}
\end{thm}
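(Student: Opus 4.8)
The plan is to split the construction into a rational (Hasse--Minkowski) phase and a lattice-theoretic (localization) phase. First I would produce a rational quadratic space $(V,q)$ of dimension $n$, signature $(r,s)$, that realizes all the prescribed local behavior; then I would choose a $\Z$-lattice in $V$ whose $p$-adic completions match $f_p$ for $p\in S$ and are maximal for $p\notin S$.

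For the rational phase, read off from each $f_p$ ($p\in S$) the invariants of the space $f_p\otimes\Q_p$ --- its determinant square class $d_p$ and its Hasse invariant $\epsilon_p$ --- and from $(r,s)$ the determinant sign $(-1)^s$ together with the archimedean invariant. By weak approximation for square classes (CRT plus a sign condition) I can choose a single $d\in\Q^\times/\Q^{\times 2}$ whose class at each $p\in S$ is $d_p$ and whose sign is $(-1)^s$. It then remains to prescribe Hasse invariants $\epsilon_p$ at the primes $p\notin S$, trivial for all but finitely many and with $\prod_v\epsilon_v=1$; the classical realization theorem for rational quadratic forms (which carries no further constraint once $n\geq 3$) produces $(V,q)$ with exactly these invariants. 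Since a $\Q_p$-space of dimension $\geq 3$ is determined by dimension, determinant, and Hasse invariant, we get $V\otimes\Q_p\cong f_p\otimes\Q_p$ for $p\in S$, and $V\otimes\R$ of signature $(r,s)$.

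The delicate point is the choice of $\epsilon_p$ for $p\notin S$, and this is where I expect the main obstacle. When $n\geq 4$ no isotropy outside $S$ is needed, because every $\Q_p$-space of dimension $\geq 4$ is already universal; so I may set $\epsilon_p=1$ for all but one prime $p_0\notin S$ and flip the value at $p_0$ if necessary to force the product to equal $1$, which is harmless since every pair (determinant, Hasse invariant) is realized by a local space in dimension $\geq 4$. When $n=3$ there is no such freedom: demanding that $V\otimes\Q_p$ be isotropic for $p\notin S$ forces $\epsilon_p=(-1,-d)_p$, and the isotropy pattern already imposed at $S$ and at $\infty$ fixes the rest. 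Here $\prod_v\epsilon_v$ equals $\prod_v(-1,-d)_v$ times $(-1)$ raised to the number of places taking the anisotropic value, namely those in $T\cup\{\infty\text{ if }rs=0\}$. Since $\prod_v(-1,-d)_v=1$ by Hilbert reciprocity, the product formula holds precisely when $\#T+[\,rs=0\,]$ is even, which is exactly the parity hypothesis; getting this bookkeeping right is the crux of the argument.

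For the lattice phase, fix any $\Z$-lattice in $V$ and adjust it prime by prime. At $p\in S$, transport $f_p$ across the isometry $V\otimes\Q_p\cong f_p\otimes\Q_p$ to get a $\Z_p$-lattice $L_p$ isometric to $f_p$; at $p\notin S$, let $L_p$ be a maximal $\Z_p$-lattice in $V\otimes\Q_p$. These agree with the completions of the fixed lattice for all but finitely many $p$, so by the standard fact that a $\Z$-lattice is determined by its localizations (see \cite{Cassels}) there is a unique $\Z$-lattice $L\subset V$ with $L\otimes\Z_p=L_p$ for all $p$; let $f$ be its quadratic form. Then $f\otimes\Z_p\cong f_p$ for $p\in S$ and $f$ has signature $(r,s)$. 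For $p\notin S$, maximality gives that $f\otimes\Z_p$ is ADC by Theorem~\ref{UNTHM1}a); since $V\otimes\Q_p$ is isotropic (when $n=3$) or $\Q_p$-universal (when $n\geq 4$), the ADC property upgrades this to $\Z_p$-universality, and in the ternary case $f\otimes\Z_p$ is moreover isotropic and maximal as claimed. This yields the desired global form $f$.
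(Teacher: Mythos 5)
Your proposal is correct and follows essentially the same route as the paper: a rational globalization step using weak approximation for the discriminant and the Hilbert reciprocity bookkeeping of Hasse/Witt invariants (with an auxiliary prime $p_0$ in the $n\geq 4$ case and the parity condition on $\#T$ doing exactly the work you identify in the $n=3$ case), followed by gluing a $\Z$-lattice that localizes to $f_p$ at $p\in S$ and to a maximal lattice elsewhere, with maximality plus ADC plus $\Q_p$-universality yielding $\Z_p$-universality. The only cosmetic difference is that the paper phrases the ternary computation via the Witt invariant $c_p=\epsilon_p\left(\frac{-1,-d}{\Q_p}\right)$ rather than solving for $\epsilon_p$ directly, which is the same calculation.
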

\begin{proof}
Let $(f_{\infty})_{/\R}$ be a quadratic form of signature $(r,s)$. 

\vskip 0.1in
\noindent Step 1a): Suppose $n = 3$.  By weak approximation \cite[Thm. 8]{Artin}, there is $d \in \Z^+$ such that for all $p \in S$, we have $d \equiv \disc f_p \pmod{\Q_p^2}$ and $d > 0$ if and only if $s$ is even.  For $p$ a place of $\Q$ (i.e., a prime number or $\infty$) and $g_{/\Q_p}$ a ternary quadratic form, let $\epsilon_p(g) \in \{ \pm 1\}$ be the
Hasse invariant and let $c_p(g) = \epsilon(g) (\frac{-1,-\disc g}{\Q_p}) \in \{ \pm 1\}$ be the Witt invariant.  Recall that for a ternary
quadratic form $g$ over $\Q_p$, we have $c_p(g) = -1$ if and only if $g$ is anisotropic.  For $p$ a place of $\Q$, put
\begin{equation}
\label{GLOBALIZATIONEQ0}
 c_p = \begin{cases} c_p(f_p) & \text{if $p \in S$}, \\
c_p(f_{\infty}) & \text{if $p = \infty$}, \\
1 & \text{otherwise}. \end{cases}
\end{equation}
By virtue of the parity condition imposed on $T$, we have $\prod_{p \leq \infty} c_p = 1$.  Now for $p$ a place of $\Q$, put
\[ \epsilon_p \coloneqq c_p \left(\frac{-1,-d}{\Q_p} \right). \]
By (\ref{GLOBALIZATIONEQ0}) and the properties of the Hilbert symbol, we have that $\epsilon_p = 1$ except for finitely many
$p$ and $\prod_{p \leq \infty} \epsilon_p = 1$.  Since two ternary quadratic forms over $\Q_p$ with the same discriminant square class and Witt invariants are isometric, by \cite[Prop. 7]{CA}, there is a quadratic form $q_{/\Q}$ of signature $(r,s)$ such that
$\disc q \equiv d \pmod{\Q^2}$, $q_{/\Q_p} \cong q_p$ for all $p \in S$ and $q$ is isotropic at all prime numbers $p \notin S$. 

\vskip 0.1in
\noindent Step 1b): Suppose $n \geq 4$.  Fix a finite odd prime $p_0 \notin S$.
For $p$ a place of $\Q$, put
\begin{equation}
\label{GLOBALIZATIONEQ1}
\epsilon_p = \begin{cases}
\epsilon_p(f_p) & \text{if $p \in S \cup \{\infty\}$,}  \\ 1 & \text{if $p \notin S \cup \{p_0,\infty\}$},  \end{cases}
\end{equation}
and take $\epsilon_{p_0} \in \{ \pm 1\}$ to be so that
\[ \prod_{p \leq \infty} \epsilon_p = 1. \]
Now let $(q_{p_0})_{/\Q_{p_0}}$ be an isotropic $n$-ary quadratic form with $\epsilon_{p_0}(q_{p_0}) = \epsilon_{p_0}$.
(Since
\[ \epsilon_{p_0}(t_1^2 + \ldots + t_n^2) = 1\]
and
\[ \epsilon_{p_0}(t_1^2 + \ldots + t_{n-2}^2 + r t_{n-1}^2 + p_0 t_n^2) = -1, \]
this is certainly possible.)  By weak approximation, there is $d \in \Z^+$ such that for all $p \in S \cup \{p_0\}$, we have
$d \equiv \disc f_p \pmod{\Q_p^2}$ and $d > 0$ if and only if $s$ is even.  Applying \cite[Prop. 7]{CA} there is an $n$-ary quadratic form $q_{/\Q}$ of signature $(r,s)$
such that $q_{/\Q_p} \cong q_p$ for all $p \in S \cup \{p_0\}$. 

\vskip 0.1in
\noindent Step 2: We view the quadratic form $q_{/\Q}$ constructed in Step 1 as a quadratic space $V_{/\Q}$.  For each $p \in S$, we may
view $f_p$ as a $\Z_p$-lattice $L_p$ in the quadratic space $V \otimes \Q_p$.  Let $M$ be any maximal $\Z$-lattice in $V$.  By \cite[Thm. 9.4]{Gerstein},
there is a lattice $L$ in $V$ such that for a prime number $p$, we have
\[ L \otimes \Z_p = \begin{cases} L_p &\text{if $p \in S$}, \\ M_p & \text{otherwise}. \end{cases} \]
The quadratic form $f_{/\Z}$ corresponding to $L$ satisfies the desired properties: If $n = 3$, then for all
$p \notin S$, the form $f_{/\Z_p}$ is iostropic and maximal, hence universal, while if $n \geq 4$ then for all $p \notin S$ the
form $f_{/\Z_p}$ is maximal, hence ADC, hence universal.
\end{proof}



\section{Proofs of the remaining theorems}

\subsection{Proof of Theorem \ref{THM1.8}b)} Let $f_{/\Z}$ be a primitive isotropic binary quadratic form
of Discriminant $\Delta$.  By (\ref{PRODUCTFORMULA}) we have $\delta(f) = \prod_p \delta_p(f)$.
For $p > 2$, Case 1 of \S 2.2.1 and (\ref{LOCALDENEQ}) computes $\delta_p(f)$, while
Proposition \ref{PROP2.5} computes $\delta_2(f)$.

\begin{remark}
Our local analysis also shows that for a primitive isotropic binary quadratic form $f_{/\Z}$, the set $D_{f,\loc}$ is a union of congruence classes modulo $\Delta$, so $\delta(f) = \frac{N(\Delta)}{\Delta}$, where $N(\Delta)$ is
the number of congruence classes modulo $\Delta$ that are locally represented by $f$.
\end{remark}

\subsection{Proof of Theorem \ref{THM1.9}} Let $n \geq 3$ and let $f_{/\Z}$ be an $n$-ary quadratic form.  We have
\[ \delta(f) = \prod_p \delta_p(f). \]
Each $\delta_p(f)$ is positive.  Moreover, since $n \geq 3$, for each odd prime $p \nmid \disc(f)$ we have
that $f_{/\Z_p}$ is universal, so $\delta_p(f) = 1$.  It follows that $\delta(f)  > 0$.

\subsection{Proof of Theorem \ref{THM1.10}} Let $f_{/\Z}$ be an anisotropic ternary form.  By Hasse-Minkowski,
$f$ is anisotropic at a nonempty, finite set of places of even cardinality hence over $\Z_p$ for at least one
prime $p$.  As seen in \S 2, $f$ then does not represent any element in the $\Q_p$-square class of $-\disc f$,
which forces $\delta_p(f) < 1$ and thus $\delta(f) = \prod_p \delta_p(f) < 1$.

\subsection{Proof of Theorem \ref{THM1.12}} Let $n \geq 3$, and let $f_{/\Z}$ be an $n$-ary quadratic form.
If $n = 3$ we assume that $f$ isotropic.  For every prime $p$, the form $f_{/\Q_p}$ is universal.  Thus
$f$ is locally universal if and only if it is locally ADC.  Moreover, since every $\Z_p$-square class has positive measure,
these conditions are also equivalent to $\delta_p(f) = 1$ for all $p$ and thus, since $\delta(f) = \prod_p \delta_p(f)$, to $\delta(f) = 1$.  This proves Theorem \ref{THM1.12}a).  Part b) follows from part a)
and Theorem \ref{UNTHM1}: if $f$ is maximal, it is locally maximal and thus locally ADC.

\subsection{Proof of Theorem \ref{THM1.13}} Let $n \geq 1$ and let $f_{/\Z}$ be an $n$-ary
quadratic form. By (\ref{LOCALDENEQ}) we have $\delta_p \in (0,1] \cap \Q$ for all primes $p$. If $n\ge 3$ or $n=2$ but $f_{/\Z}$ is isotropic, then $\delta_p(f)=1$ for all but finitely many primes $p$; hence, $\delta(f) = \prod_p \delta_p(f)$ is rational. The remaining case is that $n \leq 2$ and $f$ is anisotropic, and in this case we've already seen that $\delta(f) = 0$.

\subsection{Proof of Theorem \ref{THM1.14}}

\begin{lemma}
\label{DDLEMMA}
Let $\{a_n\}_{n=1}^{\infty}$ be a sequence with values in $(0,1]$.  We suppose: 
\begin{enumerate}
	\item[(i)] $a_n \ra 0$,
	\item[(ii)] $\sum_{n=1}^{\infty} a_n = \infty$. 
\end{enumerate}
Then: for all $0 \leq \alpha < \beta \leq 1$, there are positive integers $N,n_1,\ldots,n_N$ such that
$\prod_{n=1}^N (1-a_{n_i}) \in (\alpha,\beta)$.
\end{lemma}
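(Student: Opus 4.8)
The plan is to prove this by a discrete ``intermediate value'' argument applied to the partial products of a suitable tail of the sequence, exploiting the two hypotheses in complementary ways: condition (ii) guarantees that such products eventually drop below any positive level, while condition (i) guarantees that they do so in arbitrarily small steps, so that they cannot leap over the target interval $(\alpha,\beta)$. The one analytic input I would record first is the standard fact that for reals in $[0,1)$ with divergent sum, the product of the complementary factors vanishes: from $1-x \le e^{-x}$ one gets $\prod_{n=M}^{K}(1-a_n) \le \exp\!\left(-\sum_{n=M}^{K} a_n\right)$, and since deleting the finitely many terms $a_1,\dots,a_{M-1}$ from a divergent series of nonnegative terms leaves a divergent series, the right-hand side tends to $0$ as $K \ra \infty$ for every fixed $M$. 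Hence \emph{every} tail product tends to $0$.

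Next I would fix $0 \le \alpha < \beta \le 1$. The case $\alpha = 0$ is immediate, since any sufficiently long tail product is positive and, by the vanishing fact, eventually lies below $\beta$; so assume $\alpha > 0$. Using (i), choose $M$ so large that $a_n < \beta-\alpha$ and $a_n < 1-\alpha$ for all $n \ge M$ (in particular each $1-a_n \in (0,1)$), and for $k \ge 1$ put $P_k \coloneqq \prod_{j=M}^{M+k-1}(1-a_j)$. Then $(P_k)$ is strictly decreasing, $P_1 = 1-a_M > \alpha$ by the choice $a_M < 1-\alpha$, and $P_k \to 0$ by the first paragraph. Let $k^\ast$ be the least index with $P_{k^\ast} \le \alpha$; then $k^\ast \ge 2$ because $P_1 > \alpha$. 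I claim the preceding product works: by minimality $P_{k^\ast-1} > \alpha$, while
\[
 P_{k^\ast} = P_{k^\ast-1}\,(1 - a_{M+k^\ast-1}), \qquad\text{so}\qquad P_{k^\ast-1} - P_{k^\ast} = P_{k^\ast-1}\,a_{M+k^\ast-1} < \beta-\alpha,
\]
using $P_{k^\ast-1}\le 1$ and $a_{M+k^\ast-1} < \beta-\alpha$. Therefore $P_{k^\ast-1} < P_{k^\ast} + (\beta-\alpha) \le \alpha + (\beta-\alpha) = \beta$, so $P_{k^\ast-1} \in (\alpha,\beta)$. Taking $N = k^\ast-1 \ge 1$ and $n_i = M+i-1$ yields the asserted product of $N$ distinct (indeed consecutive) factors.

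The only genuine obstacle is the ``no overshoot'' estimate in the display, and it is exactly there that hypothesis (i) is indispensable: the decrements $P_{k-1}-P_k = P_{k-1}\,a_{M+k-1}$ must be held below $\beta-\alpha$, which is what forces the passage to a tail on which the $a_n$ are uniformly small. Without (i) the statement is simply false — e.g. for $a_n \equiv \tfrac12$ the only attainable products are the numbers $2^{-N}$, which miss most intervals. The remaining care is with the edge cases $\beta = 1$ and $\alpha$ close to $1$, but both are absorbed by the single requirement that $M$ be large enough that $a_M < 1-\alpha$ and $a_n < \beta-\alpha$ for $n \ge M$, so I do not expect any further difficulty.
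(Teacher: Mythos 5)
Your proof is correct and follows essentially the same strategy as the paper's: pass to a tail on which the $a_n$ are uniformly small, use the divergence of $\sum a_n$ to force the partial products below the target level, and stop at the first crossing, using the smallness of the last factor to rule out overshooting the interval $(\alpha,\beta)$. The only (cosmetic) difference is that you control the additive decrement $P_{k-1}-P_k$ via the condition $a_n<\beta-\alpha$, whereas the paper controls the multiplicative ratio via $1-a_n>\alpha/\beta$ and exhibits the first product below $\beta$ rather than the last product above $\alpha$.
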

\begin{proof}
Put $r \coloneqq \frac{\alpha}{\beta}$.  Choose $K \in \Z^+$ such that for all $n > K$ we have
$1-a_n > r$.  We claim that there is $L \in \Z^+$ such that $\prod_{i=1}^L (1-a_{K+i}) \in (\alpha,\beta)$.  Indeed, since $\sum_{n=1}^{\infty} a_n = \infty$ we have $\prod_{n=1}^{\infty} (1-a_n) = 0$, so certainly $\prod_{i=1}^L
(1-a_{K+i}) < \beta$ for all sufficiently large $L$.  Let $L$ be the smallest such positive integer.  Then, since
$\prod_{i=1}^{L-1} (1-a_{K+i}) > \beta$, we have
\[ \prod_{i=1}^L (1-a_{K+i}) > \beta (1-a_{k+L}) > r\beta = \alpha. \qedhere \]
\end{proof}
\noindent
Now we prove Theorem \ref{DDTHM}.   For $n \in \Z^+$, let $p_n$ be the $n$th odd prime number, and put $a_n \coloneqq \frac{1}{2p_n + 2}$.
This sequence satisfies (i) and (ii) of Lemma \ref{DDLEMMA}.  Thus for all $0 \leq \alpha < \beta \leq 1$
there are $N,n_1,\ldots,n_N$ such that $\prod_{i=1}^N (1-\frac{1}{2p_{n_i}+2}) \in (\alpha,\beta)$.
\\ \indent
Let us first treat the case in which $n \geq 4$.  Then we put $S \coloneqq \{p_{n_1},\ldots,p_{n_N}\}$.  By
the local analysis of \S 2, for every prime $p > 2$ there is an (anisotropic) quadratic form $(f_p)_{/\Z_p}$ with
$\delta_p(f_p) = 1 - \frac{1}{2p+2}$ that represents at least one of $1$ and $r$, hence is primitive.
Applying Theorem \ref{GLOBALIZATION} we get an $n$-ary form $f_{/\Z}$ of signature $(r,s)$
such that $f_{/\Z_p} \cong f_p$ for all $p \in S$ and such that $f_{/\Z_p}$ is universal -- hence primitive,
for all $p \notin S$.  This form $f$ is locally primitive -- hence primitive -- and has
\[ \delta(f) = \prod_{p \in S} \delta_p(f) = \prod_{p \in S} \left(1-\frac{1}{2p+2}\right) \in (\alpha,\beta). \]
Now suppose $n = 3$.  We want to apply the above argument with $S = T$, but there is now a parity requirement
on $T$ that may not be satisfied.  If it is not, we choose $p_{n_{N+1}}$ large enough so that \[\prod_{i=1}^{N+1} \bigg{(}1-\frac{1}{2p_{n_i}+2}\bigg{)} \in (\alpha,\beta)\] and put $S = T = \{p_{n_1},\ldots,p_{n_{N+1}}\}$.  The remainder of the argument is the same.

\subsection{Proof of Theorem \ref{THM1.15}}

Put
\[ \Delta_2 \coloneqq \left\{\frac{1}{2}, \ \frac{5}{6}, \ \frac{11}{12},1 \right\} , \]
and for an odd prime $p$ put
\[ \Delta_p \coloneqq \left\{\frac{1}{2}, \ \frac{p+2}{2p+2}, \ \frac{2p+1}{2p+2},1 \right\}. \]
By Theorem \ref{PRODFORMTHM} and the material of $\S$2, we have $\delta(f) = \prod_p \delta_p(f)$ with $\delta_p(f) \in \Delta_p$ for all $p$
and $\delta_p = 1$ for all but finitely many $p$. 

\vskip 0.1in
\noindent Step 1: If $f$ is isotropic and ADC then it is universal and $\delta(f) = 1$.  So we may assume that $f$ is
anisotropic and thus is anisotropic at $k \geq 1$ primes $p$.  For each of these primes, we have
that $v_2(\delta_p(f)) < 0$ and thus $v_2(\delta(f)) \leq -k$.  It follows that the proportion of attained
values of $\delta$ within $\Q \cap [0,1]$ approaches $0$ as $k \ra \infty$ so it suffices to deal with a
fixed number $k$ of anisotropic primes.

\vskip 0.1in
\noindent Step 2: For each fixed $k \in \Z^+$, let $\mathcal{D}_k$ be the set of rational numbers $\delta = \prod_p \delta_p$ of the above form such that $\delta_p \neq 1$ for at most $k$ primes.  We claim that $\mathcal{D}_k$
is a compact subset of $[0,1]$.  First we observe that every sequence in $\mathcal{D}_1$ has a subsequence
that is either constant, convergent to $1$ or convergent to $\frac{1}{2}$, so $\mathcal{D}_1$ is a sequentially
compact metric space, thus compact.  Next we observe that $\mathcal{D}_k$ is the image of
$\prod_{i=1}^k \mathcal{D}_1$ under the continuous map $(x_1,\ldots,x_k) \mapsto x_1 \cdots x_k$ so is
compact. 

\vskip 0.1in
\noindent Step 3: We show that any compact subset $K \subset \Q \cap [0,1]$ contains only $0\%$ of $\Q \cap [0,1]$
in the sense of height.  We do so by contraposition: For each $n \in \Z^+$,
let $K_n$ be the subset of $K$ consisting of rational numbers with reduced deminator $n$.  Since there are
$\gg N^2$ elements $x \in [0,1] \cap \Q$ with numerator and denominator at most $N$, if $K$ does not contain $0\%$ of
the rational numbers, there is $c > 0$ and infinitely many $N \in \Z^+$ such that the number of elements of $K$ with denominator
at most $N$ is at least $c N^2$, and thus there is an infinite subset $J \subset \Z^+$ such that
\[ \forall n \in J, \ \# K_n > c n: \]
for if not, then the number of rationals in $K$ with reduced denominator at most $N$ would be
at most $\sum_{n=1}^N c n + O(1)$, which is smaller than $c N^2$ for large $N$.  Now we thicken
each $K_n$ to a set $U_n$ by surrounding each point of $K_n$ with the intersection with $[0,1]$ of an open interval of radius $\frac{1}{2n}$.  Since distinct elements of $K_n$ are at least $\frac{1}{n}$ apart, these
intervals are pairwise disjoint, and thus for each $n \in J$ the measure of $U_n$ is at least
$\# K_n \cdot \frac{1}{2n} \geq \frac{c}{2}$.  Let $U$ be the set of reals that belong to $U_n$ for
infinitely many $n$.  Then the measure of $U$ is at least $\frac{c}{2}$ \cite[p. 40]{Halmos}.  Every
element of $U$ is the limit of a sequence in $K$, but $K$ is compact, so $K \supset U$ and thus
$K$ has positive measure. So $K$ cannot be contained in $\Q$.

\subsection{Remarks on $2$-adic properties of the density} We do not know whether the ``locally ADC'' condition in Theorem \ref{THM1.15} can be removed.  One the one hand, we have not completed the classification of densities of quadratic forms over $\Z_2$.  Using our present methods this would be tedious to do without computer assistance.   But this is not really the crux of the matter: consider for instance the problem of finding all densities of
ternary integral quadratic forms $f$ such that $f_{/\Z_2}$ is universal.  In this case, we know everything that we need to know on the quadratic forms side, but we cannot solve the elementary number-theoretic problem of determining which rational numbers arise as $\prod_p \delta_p$ for the known possible values of $\delta_p$.  The proof of Theorem \ref{THM1.15} does not adapt to this case, because it exploits $2$-adic properties of $\delta_p(f)$
and $\delta(f)$.   There is a curious phenomenon here: loosely speaking, ``$v_2(\delta(f))$ wants to be negative but does not need to be negative.''  We make this precise in the following results.

\begin{thm}
\label{NEGATIVEV2THM}
Let $f_{/\Z}$ be a positive definite ternary quadratic form.  Let $U$ be the set of primes $p \equiv 3 \pmod{4}$ such that
$f_{/\Z_p}$ is isotropic.  Suppose that $f_{/\Z_p}$ is ADC for $p = 2$ and for all $p \in U$.  Then
\[ v_2(\delta(f)) < 0. \]
\end{thm}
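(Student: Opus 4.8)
The plan is to reduce to a purely local computation and then track $2$-adic valuations prime by prime. By the Density Hasse Principle (Corollary \ref{DENSITYHASSE}) we have $\delta(f) = \prod_p \delta_p(f)$, and since $f_{/\Z_p}$ is unimodular isotropic, hence universal, at every odd $p \nmid \disc f$, all but finitely many factors equal $1$. Thus $v_2(\delta(f)) = \sum_p v_2(\delta_p(f))$ is a finite sum and it suffices to pin down the sign of each summand. For odd $p$, Theorem \ref{TABLEMEASURETHM} gives $\delta_p(f) = \frac{1}{2(p+1)}\,S_p$ with $S_p = \sum_{s : v_s < \infty} p^{1-v_s}$, so that $v_2(\delta_p(f)) = v_2(S_p) - 1 - v_2(p+1)$; here I note that scaling the local form by a power of $p$ (i.e. passing to its primitive part) multiplies $\delta_p(f)$ by a $2$-adic unit, so all $v_2$-computations may be done on primitive forms.

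Next I would handle the anisotropic primes. Since $f$ is positive definite it is $\Q$-anisotropic, so by Hasse--Minkowski and the product formula $\prod_{p \leq \infty} c_p(f) = 1$ for the Witt invariant (as used in Theorem \ref{GLOBALIZATION}), $f$ is anisotropic at an even number of places; as $\infty$ is one of them, the set $A$ of \emph{finite} anisotropic primes has odd, hence positive, cardinality. For odd $p \in A$ exactly one square class, that of $-\disc f$, fails to be $\Q_p$-represented (as established in \S 2), so $S_p$ is a sum of exactly three powers of $p$, i.e. three $2$-adic units; therefore $v_2(S_p) = 0$ and $v_2(\delta_p(f)) = -1 - v_2(p+1) < 0$. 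If instead $2 \in A$, then $f_{/\Z_2}$ is ADC by hypothesis and, being anisotropic, is not $\Q_2$-universal, so $\delta_2(f) < 1$ and Theorem \ref{LOCALADCTHM} (applicable since $n = 3$) forces $v_2(\delta_2(f)) < 0$. In either case every prime of $A$ contributes a strictly negative term.

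Then I would show every isotropic prime contributes $0$. When $p$ is isotropic, $f_{/\Q_p}$ is universal so all four $v_s$ are finite; grouping the unit classes $A_p := p^{1-v_1} + p^{1-v_r}$ (even $v_s$, odd exponents) and the uniformizer classes $B_p := p^{1-v_\pi} + p^{1-v_{r\pi}}$ (odd $v_s$, even exponents), and using $p^2 \equiv 1 \pmod 8$, one gets $A_p \equiv 2p$ and $B_p \equiv 2$, hence $S_p \equiv 2(p+1) \pmod 8$. For $p \equiv 1 \pmod 4$ this yields $S_p \equiv 4 \pmod 8$, so $v_2(S_p) = 2 = 1 + v_2(p+1)$ and $v_2(\delta_p(f)) = 0$, with no ADC hypothesis required. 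For $p \equiv 3 \pmod 4$ the congruence gives only $v_2(S_p) \geq 3$ and the sign is genuinely uncontrolled (Example \ref{INTERESTINGEXAMPLE} exhibits such a prime contributing a \emph{positive} term); this is exactly where the assumption $p \in U \Rightarrow f_{/\Z_p}$ ADC intervenes, since isotropic plus ADC means universal, so $\delta_p(f) = 1$ and $v_2(\delta_p(f)) = 0$. Likewise, if $2$ is isotropic the ADC hypothesis at $2$ makes it universal, giving $v_2(\delta_2(f)) = 0$.

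Assembling these, $v_2(\delta(f)) = \sum_{p \in A} v_2(\delta_p(f))$ is a sum of strictly negative terms over the nonempty set $A$, hence is negative, as desired. I expect the main obstacle to be the isotropic $p \equiv 1 \pmod 4$ computation: one must prove $v_2(S_p)$ is \emph{exactly} $2$, not merely $\geq 2$, which forces working modulo $8$ rather than modulo $4$. It is precisely the breakdown of this exact evaluation for $p \equiv 3 \pmod 4$ that both accounts for the ``$v_2(\delta(f))$ wants to be negative but need not be'' phenomenon and explains why the ADC hypotheses are imposed at exactly $p = 2$ and at the isotropic primes $p \equiv 3 \pmod 4$.
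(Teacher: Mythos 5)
Your proof is correct, and it shares the paper's skeleton: reduce via the product formula $\delta(f)=\prod_p\delta_p(f)$ to a finite sum of local $2$-adic valuations, observe via Hilbert reciprocity that the set of finite anisotropic primes is nonempty (it has odd cardinality because $\infty$ is anisotropic for a positive definite form), and deploy the ADC hypotheses at exactly $p=2$ and the isotropic primes $p\equiv 3\pmod 4$. The genuine divergence is at the odd anisotropic primes: the paper passes to a maximal lattice $g\supseteq f$, writes $\delta_p(f)=\delta_p(g)-\sum_x\tfrac{p-1}{2p^{v(x)+1}}$, and concludes $v_2(\delta_p(f))=v_2(\delta_p(g))<0$ from the ultrametric inequality together with Theorem \ref{LOCALADCTHM} applied to $g$; you instead note that exactly three of the four entries of the representation table are finite, so your $S_p$ is a sum of three $2$-adic units, hence itself a unit, and $v_2(\delta_p(f))=-1-v_2(p+1)<0$. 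Your route is more elementary and requires no ADC or maximality input at those primes, though it is tied to the ternary count of represented square classes, whereas the maximal-lattice comparison is the paper's general-purpose device. At isotropic $p\equiv 1\pmod 4$ your mod-$8$ evaluation $S_p\equiv 2(p+1)$ is heavier than the paper's one-line observation that $\delta_p(f)=1-(\text{terms of positive $2$-adic valuation})$ has $v_2=0$, but it has the virtue of isolating exactly where the estimate degenerates for $p\equiv 3\pmod 4$ and hence why the hypothesis on $U$ is imposed --- a point the paper leaves implicit.
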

\begin{proof}
As above, let $S$ be the set of prime numbers $p$ such that $f_{/\Z_p}$ is not both anisotropic and maximal, and let $T$ be the
subset of $S$ of anisotropic places; $\#T$ is odd and thus positive.  Then
\[ \delta(D_f) = \prod_{p \in S} \delta_p(f). \]
We claim that $v_2(\delta_p(f)) \leq 0$ for all $p \in S$ and $v_2(\delta_p(f)) < 0$ for all $p \in T$: certainly, this suffices. 
\begin{itemize}
\item We have $v_2(\delta_2(f)) \leq 0$ by Theorem \ref{LOCALADCTHM}; henceforth we suppose $p > 2$. 
\item Suppose $p \in S \setminus T$, so $f_{/\Z_p}$ is isotropic.  By hypothesis, if $p \equiv 3 \pmod{4}$, then $f_{/\Z_p}$
is ADC, so again Theorem \ref{LOCALADCTHM} gives $v_2(\delta_p(f)) \leq 0$.  Now suppose $p \equiv 1 \pmod{4}$.
Let $X$ be a set of representatives of the $\Z_p$-square classes that $f$ does not represent.  Thus
\[ \delta_p(f) = 1 - \sum_{x \in X} \frac{p-1}{2p^{v(x)+1}}. \]
Since $p \equiv 1 \pmod{4}$, each term has positive $2$-adic valuation, and thus $\delta_p(f) = 0$.  
\item Suppose $p \in T$, so $f_{/\Z_p}$ is anisotropic.  Let $g_{/\Z_p}$ correspond to a maximal lattice containing $f$.  Then arguing as in the previous case we get
\[ \delta_p(f) = \delta_p(g) - \sum_{x \in X} \frac{p-1}{2p^{v(x)+1}}. \]
Since each term of the sum has non-negative $2$-adic valuation and $v_2(\delta_p(g)) < 0$, we get $v_2(\delta_p(f)) = v_2(\delta_p(g)) < 0$. \qedhere
\end{itemize}
\end{proof}


\begin{prop}
\label{DISCONJPROP}
For every $k \in \N$ there is a positive definite ternary quadratic form $f_{/\Z}$ such that $v_2(\delta(f)) \geq k$.
\end{prop}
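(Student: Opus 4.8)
The plan is to use the Density Hasse Principle (Corollary \ref{DENSITYHASSE}), which turns $v_2(\delta(f))$ into the sum $\sum_p v_2(\delta_p(f))$ of local contributions, and then to engineer these contributions prime by prime using the globalization result (Theorem \ref{GLOBALIZATION}). The essential tension is exactly the one flagged in Theorem \ref{NEGATIVEV2THM}: because $f$ is positive definite, the parity hypothesis in Theorem \ref{GLOBALIZATION} (with $rs=0$) forces an \emph{odd}, hence nonzero, number of anisotropic primes, and each anisotropic odd prime $q$ contributes a strictly negative amount $v_2(\delta_q(f)) = -1 - v_2(q+1) \le -2$. The idea is to make this unavoidable negative contribution as small as possible while simultaneously producing one isotropic prime whose contribution to $v_2(\delta(f))$ is large and positive.

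Concretely, given $k$, first I would set $m := k+3$ and invoke Dirichlet's theorem to choose two distinct odd primes $p \equiv -1 \pmod{2^m}$ and $q \equiv 1 \pmod{4}$ (these are automatically distinct and odd). At $p$ I take the local form $f_p = t_1^2 + p\,t_2^2 - p\,t_3^2$ of Example \ref{INTERESTINGEXAMPLE}, which is isotropic over $\Z_p$ and has $\delta_p(f_p) = \frac{p+1}{2p}$, so that $v_2(\delta_p(f_p)) = v_2(p+1) - 1 \ge m-1$. At $q$ I take any anisotropic ternary form $f_q$ over $\Z_q$ (for instance a maximal one); by the computation of local densities of anisotropic ternary forms in \S2, we have $\delta_q(f_q) \in \{\tfrac{q+2}{2q+2}, \tfrac{2q+1}{2q+2}\}$, and in either case $v_2(\delta_q(f_q)) = -1 - v_2(q+1) = -2$ since $q \equiv 1 \pmod 4$.

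Now I would apply Theorem \ref{GLOBALIZATION} with $n=3$, signature $(3,0)$, $S = \{p,q\}$, and $T = \{q\}$: since $rs = 0$ and $\#T = 1$ is odd, the parity hypothesis holds, and we obtain a positive definite ternary form $f_{/\Z}$ with $f_{/\Z_p} \cong f_p$, with $f_{/\Z_q} \cong f_q$, and with $f_{/\Z_\ell}$ universal (so $\delta_\ell(f) = 1$) for every prime $\ell \notin S$, including $\ell = 2$. By the Density Hasse Principle we then have
\[ \delta(f) = \delta_p(f_p)\,\delta_q(f_q) = \frac{p+1}{2p}\cdot \delta_q(f_q), \]
so that $v_2(\delta(f)) = (v_2(p+1) - 1) + (-2) \ge m - 3 = k$, as required.

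The only genuine obstacle is the bookkeeping imposed by positive definiteness: one must accept at least one anisotropic prime, and hence a negative local contribution, so the argument relies essentially on the fact that the positive contribution $v_2\bigl(\tfrac{p+1}{2p}\bigr) = v_2(p+1) - 1$ coming from a single isotropic prime is unbounded as $p$ ranges over primes $\equiv -1 \pmod{2^m}$. Everything else — existence of the requisite primes (Dirichlet), existence of the prescribed local forms (\S2), and the simultaneous realization of this local data by a positive definite global form (Theorem \ref{GLOBALIZATION}) — is already in hand.
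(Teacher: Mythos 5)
Your proof is correct and follows essentially the same route as the paper's: a Dirichlet prime $p \equiv -1 \pmod{2^{m}}$ carrying the isotropic local form $t_1^2 + p t_2^2 - p t_3^2$ with $\delta_p = \frac{p+1}{2p}$, exactly one anisotropic prime to satisfy the parity constraint of Theorem \ref{GLOBALIZATION}, and then globalization together with the Density Hasse Principle. The only (immaterial) difference is where the obligatory anisotropic prime sits: the paper puts it at $2$ via $f_2 = x^2+y^2+z^2$, costing $v_2(\delta_2(f)) = -1$ and so needing only $p \equiv -1 \pmod{2^{k+2}}$, whereas you use an auxiliary odd prime $q \equiv 1 \pmod 4$ (a maximal anisotropic $f_q$, costing $v_2 = -2$) and leave $2$ universal.
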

\begin{proof}
By Dirichlet's Theorem, there is a prime number $p$ such that $p \equiv -1 \pmod{2^{k+2}}$.  Let $S = \{2,p\}$,  $T = \{2\}$ and
\[ f_2 = x^2 + y^2 + z^2, \]
\[ f_p = x^2 + py^2 - pz^2. \]
The form $(f_2)_{/\Z_2}$ is anisotropic, and the form $(f_p)_{/\Z_p}$ is isotropic.  Thus Theorem \ref{GLOBALIZATION} applies, and there is a positive definite ternary $f_{/\Z}$ such that $f_{/\Z_2} \cong f_2$,
$f_{\Z_p} \cong f_p$ and $f_{/\Z_{\ell}}$ is isotropic and maximal for all $\ell \neq 2, p$.  So
\[ \delta(f) = \delta_2(f) \delta_p(f) = \frac{5}{6} \cdot \frac{p+1}{2p}. \]
By our choice of $p$, we have $2^{k+2} \mid p+1$, and the result follows.
\end{proof}

\end{document}